\DeclareSymbolFont{bbold}{U}{bbold}{m}{n}
\DeclareSymbolFontAlphabet{\mathbbold}{bbold}
\newtheorem{theorem}{Theorem}[section]
\newtheorem{prop}[theorem]{Proposition}
\newtheorem{lemma}[theorem]{Lemma}
\newtheorem{cor}[theorem]{Corollary}
\newtheorem{conj}[theorem]{Conjecture}
\newtheorem{thm}[theorem]{Theorem}
\theoremstyle{definition}
\newtheorem{definition}[theorem]{Definition}
\theoremstyle{remark}
\newtheorem{remark}[theorem]{Remark}
\numberwithin{equation}{section}
\begin{document}

\title{Homological aspects of branching laws}

\author{Dipendra Prasad}
\address{
Indian Institute of Technology Bombay \\
Powai \\
Mumbai - 400 076 \\ \
India}
\email{prasad.dipendra@gmail.com}

\thanks{}

\subjclass[2020]{Primary 11F70; Secondary 22E55}

\keywords{Branching laws, Ext groups, Euler-Poincar\'e characteristic, GGP conjectures, classical groups}

\maketitle

\newcommand{\Sc}{\mathcal{S}}
\newcommand{\V}{\mathcal{V}}
\newcommand{\M}{\mathfrak{R}}
\newcommand{\W}{\mathcal{W}}
\newcommand{\Hecke}{\mathcal{H}}

\newcommand{\PP}{\mathcal{P}}
\newcommand{\Q}{\mathbb{Q}}
\newcommand{\R}{\mathbb{R}}
\newcommand{\Z}{\mathbb{Z}}

\newcommand{\Gm}{\mathbb{G}_m}

\newcommand{\CC}{\mathbb{C}}
\newcommand{\FF}{\mathbb{F}}

\newcommand{\F}{\mathfrak{F}}
\newcommand{\N}{\mathbb{N}}
\newcommand{\RR}{\mathcal{R}}

\newcommand{\Tor}{{\rm Tor}}
\newcommand{\Hom}{{\rm Hom}}
\newcommand{\EP}{{\rm EP}}
\newcommand{\Bes}{{\rm Bes}}
\newcommand{\PD}{{\rm PD}}
\newcommand{\Ps}{{\rm Ps}}
\newcommand{\Ext}{{\rm Ext}}
\newcommand{\Ind}{{\rm Ind}}
\newcommand{\ind}{{\rm ind}}
\def\circG{{\,^\circ G}}

\def\GG{{\rm GG}}
\def\G{{\rm G}}
\def\Aut{{\rm Aut}}
\def\SL{{\rm SL}}
\def\Spin{{\rm Spin}}
\def\PSp{{\rm PSp}}
\def\PSO{{\rm PSO}}
\def\PGSO{{\rm PGSO}}
\def\PSL{{\rm PSL}}
\def\GSp{{\rm GSp}}
\def\PGSp{{\rm PGSp}}
\def\Sp{{\rm Sp}}
\def\sc{{\rm sc}}
\def\St{{\rm St}}
\def\GU{{\rm GU}}
\def\SU{{\rm SU}}
\def\U{{\rm U}}
\def\Wh{{\rm Wh}}
\def\GO{{\rm GO}}
\def\GL{{\rm GL}}
\def\PGL{{\rm PGL}}
\def\GSO{{\rm GSO}}
\def\Gal{{\rm Gal}}
\def\SO{{\rm SO}}
\def\OO{{\rm O}}
\def\Out{{\rm Out}}
\def\O{{\mathcal O}}
\def\Sym{{\rm Sym}}
\def\tr{{\rm tr\,}}
\def\ad{{\rm ad\, }}
\def\Ad{{\rm Ad\, }}
\def\rank{{\rm rank\,}}

\begin{flushright}{\today}
\end{flushright}

\begin{abstract}
  In this mostly expository article,  we consider certain homological aspects
  of branching laws for representations of a group restricted to its subgroups in the context of $p$-adic groups.
  We follow our earlier paper \cite{Pr3} updating it with some more recent works. In particular,
  following Chan and Chan-Savin, see many of their papers listed in the bibliography,
  we have emphasized  in this work that the restriction of a (generic) representation
  $\pi$ of a group $G$ to a closed subgroup $H$ (most of the paper is written in the context of GGP)
  turns out to be a projective representation
  on most Bernstein blocks of the category of
  smooth representations of $H$. Further, once $\pi|_H$ is a projective module in a particular
  Bernstein block, it has a simple structure. 
  \end{abstract}

\maketitle

\setcounter{tocdepth}{1}

\tableofcontents

\newpage 
\section{Introduction}

    If $H$ is a subgroup of a group $G$, $\pi_1$ an irreducible representation
    of $G$, one is often interested in decomposing the representation $\pi_1$ when restricted to $H$ into irreducible components.
    This is what is called the 
  branching law of representations of $G$ restricted to $H$, much studied for
  finite dimensional representations of finite, compact or Algebraic groups.
  In this paper,  we will be dealing mostly with infinite dimensional representations of a  group $G$
  which when restricted to $H$ are  usually not completely reducible  and  there is often no obvious meaning to
  ``decomposing the representation restricted to $H$'', or a meaning has to be assigned in some precise way,
  such as the Plancherel decomposition of unitary representations of $G$ restricted to $H$.
Unless otherwise mentioned, we will say that a representation $\pi_2$ of $H$ appears in a representation $\pi_1$ of $G$ if
  \[\Hom_H[\pi_1,\pi_2] \not = 0.\]

    The local GGP conjectures (which are all theorems now!) are about such branching laws for certain pairs of classical groups
    $(G,H)$, which in this paper we will often take to be $(\GL_{n+1}(F), \GL_n(F))$, or $(\SO_{n+1}(F), \SO_n(F))$, where $F$
    is a local field which will be non-archimedean unless otherwise mentioned.

   For an irreducible admissible representation $\pi_1$ of $\SO_{n+1}(F)$, and $\pi_2$ of $\SO_n(F)$, the question of interest for GGP is in the  understanding of the Hom spaces,
    \begin{eqnarray*}
      \Hom_{\SO_n(F)}[\pi_1,\pi_2] & \cong &   \Hom_{\SO_n(F)}[\pi_1 \otimes \pi_2^\vee, \CC ] \\
    &\cong&  \Hom_{\SO_{n+1}(F) \times \SO_n(F)}[\Sc(X), \pi_1^\vee \otimes \pi_2],\end{eqnarray*}  
        where
        $X= \SO_n(F)\backslash [\SO_n(F) \times \SO_{n+1}(F)],$ and $\Sc(X)$ denotes the space of compactly supported
        smooth functions on $X$. This way of formulation of ``branching laws'' has the advantage
        that instead of working with the $G \times H$ homogeneous and spherical variety $X = \Delta(H) \backslash
        [G\times H]$ as in the present case, one can work with any spherical variety ${\mathbb X}$
        for a group ${\mathbb G}$ (on which the group ${\mathbb G}$ may not act transitively) 
        and consider the branching law associated to the
        spherical variety $\mathbb X$ as the study of,
        \[  \Hom_{\mathbb G}[\Sc({\mathbb X}), \Pi],\]
        for $\Pi$ an irreducible representation of the group ${\mathbb G}$.
        However, as the paper is mostly in the context of GGP, we will consider branching from  the point of view of
        $\Hom_{H}[\pi_1,\pi_2]  \cong    \Hom_{H}[\pi_1 \otimes \pi_2^\vee, \CC ].$

The first important result about branching laws considered by GGP  
is the multiplicity one property:
\[m(\pi_1,\pi_2):=\dim \Hom_{\SO_n(F)}[\pi_1,\pi_2] \leq 1.\]

This is due to  A. Aizenbud, D. Gourevitch, S. Rallis and G. Schiffmann in \cite{AGRS} in the non-archimedean case, 
and B. Sun and C. Zhu in \cite{Sun-Zhu} in the archimedean case.

It may be mentioned that before  
the full multiplicity one theorem was proved, even finite dimensionality  of the multiplicity spaces  was not 
known, which were later answered in greater generality in the work of  Y. Sakellaridis and   A. Venkatesh
in \cite{Sak-Ven}. For infinite dimensional representations which is what we are mostly dealing with,
there is also the possibility that  $m(\pi_1,\pi_2)$ could be identically 0 for a particular $\pi_1$!

With the multiplicity one theorems proved,
one then goes on to prove a more precise
description of the set of irreducible admissible representations $\pi_1$ of $\SO_{n+1}(F)$ and $\pi_2$ of $\SO_n(F)$
with  
\[\Hom_{\SO_n(F)}[\pi_1,\pi_2] \not = 0.\] 

Precise theorems about  $\Hom_{\SO_n(F)}[\pi_1,\pi_2]$ have become available in a series of papers due to 
Waldspurger and Moeglin-Waldspurger, cf. \cite{Wa}, \cite{Wa1}, \cite{Wa2}, \cite{Mo-Wa} for orthogonal groups. These  
were followed by a  series of papers by 
Beuzart-Plessis  for unitary groups, cf. \cite{Ra1},  \cite{Ra2},  \cite{Ra3}.

Given the interest in the space 
\[\Hom_{\SO_n(F)}[\pi_1,\pi_2] \cong  \Hom_{\SO_{n+1}(F) \times \SO_n(F)}[\Sc(X), \pi_1^\vee\otimes \pi_2],\] 
it is natural
to consider the related spaces 
\[\Ext^i_{\SO_n(F)}[\pi_1,\pi_2] \cong  \Ext^i_{\SO_{n+1}(F) \times \SO_n(F)}[\Sc(X), \pi_1^\vee \otimes \pi_2],\]  and in fact 
homological algebra methods suggest that the simplest answers are not for
these individual spaces, but for the 
alternating sum of their dimensions:  
$$\EP[\pi_1,\pi_2] = \sum_{i=0}^{\infty}(-1)^i\dim \Ext^i_{\SO_n(F)}[\pi_1,\pi_2];$$
these hopefully more manageable 
objects - certainly more flexible - when coupled with vanishing of higher $\Ext$'s (when available) 
may give theorems about 
$$\Hom_{\SO_n(F)}[\pi_1,\pi_2].$$

We hasten to add that before we can define $\EP[\pi_1,\pi_2]$, 
$\Ext^i_{\SO_n(F)}[\pi_1,\pi_2]$ 
need to  be proved to be finite dimensional 
for all $i \geq 0$ when $\pi_1$ and $\pi_2$ are finite length admissible representations 
of $\SO_{n+1}(F)$ and $\SO_n(F)$ respectively; further, $\Ext^i_{\SO_n(F)}[\pi_1,\pi_2]$ 
need to be proved to be 0 for $i$ large.

Vanishing of 
\[\Ext^i_{\SO_n(F)}[\pi_1,\pi_2]\]
for large $i$ is a well-known generality:  
for reductive $p$-adic groups $G$ considered here, it is 
known that 
\[\Ext^i_G[\pi,\pi'] = 0 \]
for any two smooth representations $\pi$ and $\pi'$ of $G$ when $i$ is greater than the $F$-split rank of $G$.  
This is a standard application of the projective resolution of the trivial representation
$\CC$ of $G$ provided by the (Bruhat-Tits) building associated to $G$.

For the proof  of the finite dimensionality 
of $\Ext^i_G[\pi_1,\pi_2]$
we  note 
that unlike the Hom spaces, $\Hom_{G}[\pi_1,\pi_2]$, where we will have no idea how to
prove finite dimensionality of $\Hom_{G}[\pi_1,\pi_2]$ if both  $\pi_1$ and $\pi_2$ are cuspidal,  
for $\Ext^i_G[\pi_1,\pi_2]$ exactly
this case can be handled a priori, for $i> 0$, as almost by the very  definition  of cuspidal
representations, they are  both projective and injective objects in the category of smooth representations
(and projective objects remain projective on restriction to a closed subgroup).

Finite dimensionality of $\Ext^i_{\SO(n)}[\pi_1,\pi_2]$ when one of the representations
$\pi_1,\pi_2$ is a full principal series representation, 
is achieved by an inductive argument
both on $n$ and on the split rank of the Levi from which the principal
series arises.   The resulting analysis needs  the notion of {\it Bessel models}, which is also
a restriction problem involving a subgroup which has both reductive and unipotent parts.

Recently, there is a very general 
finiteness theorem for $\Ext_G^i[\pi_1,\pi_2]$ (for spherical varieties) due to
A. Aizenbud and E. Sayag in \cite{AS}. However, the  approach via Bessel models which intervene when
analyzing principal series representations of $\SO_{n+1}(F)$ when
restricted to $\SO_n(F)$ has, as a  bonus, explicit answers about
Euler-Poincar\'e characteristics (at least in some cases).

For the definition and the theorem below, see Aizenbud and Sayag \cite{AS}.
\begin{definition}
  (Locally finitely generated representations) Suppose $G$ is a $p$-adic group and $\pi$ is a smooth representation of $G$. Then
  $\pi$ is said to be a {\rm locally finitely generated  representation} of $G$ (or, also, just locally finite representation) if it satisfies one of the following equivalent conditions.
  \begin{enumerate}
  \item For each compact open subgroup $K$ of $G$, $\pi^K$ is a finitely generated module of the Hecke-algebra ${\mathcal H}(K\backslash G /K)$.
  \item For each cuspidal datum $(M,\rho)$, i.e., $M$ a Levi subgroup of $G$, and $\rho$  a cuspidal representation of $M$, $\pi[M,\rho]$, the corresponding
    component of $\pi$ in the Bernstein decomposition of the category of smooth representations of $G$, is a finitely generated $G$-module. 
\end{enumerate}
  \end{definition}

Although the following theorem from \cite{AS} could be proved by other means for the restriction of a representation of $\GL_{n+1}(F)$ to $\GL_n(F)$ (using Bernstein-Zelevinsky filtration,
and locally finite generation of the Gelfand-Graev representation due to \cite{Bu-He}), the
generality in which it applies is noteworthy.

\begin{thm} \label{AS} (Aizenbud-Sayag)
  For $\pi$ an irreducible admissible representation of $\GL_{n+1}(F)$, the restriction
  of $\pi$ to $\GL_n(F)$ is locally finite. More generally, if $(G,H)$ with $H\subset H \times G$
  is a {\it spherical pair}, i.e., $H$ has on open orbit on the flag variety of $G\times H$, 
  where
  finite multiplicity is known such as for all the GGP pairs, then the restriction
  of an  irreducible admissible representation of $G$ to $H$ is locally finite.
    \end{thm}

  As a consequence of this theorem due to Aizenbud and Sayag, note that the restriction of an  irreducible
  representation $\pi$ 
  of $\GL_{n+1}(F)$ to $\GL_n(F)$ 
  is finitely generated in any Bernstein component of $\GL_n(F)$, hence $\pi|_{\GL_n(F)}$
  has nonzero irreducible quotients by generalities (a statement, which we said earlier, we will not know how to prove
  for a general restriction problem).
  
  The following corollary is an easy consequence of standard homological algebra where we also use the fact that
  if a module is finitely generated over a noetherian ring $R$ (which need not be commutative but contains 1),
  then it has a resolution by finitely generated projective
  $R$-modules.
  
  \begin{cor}
For $\pi_1$ an irreducible representation of $G=\GL_{n+1}(F)$, and 
    $\pi_2$ of $H=\GL_n(F)$ (and true more generally for {\it spherical pairs} $H \subset H \times G$ where
finite multiplicity is known),

\[ \Ext^i_H [\pi_1,\pi_2]\]
are finite dimensional, and zero beyond the split rank of $H$.
  \end{cor}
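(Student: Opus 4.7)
The plan is to combine Theorem~\ref{AS} with the Noetherian structure of Bernstein blocks of $H = \GL_n(F)$, on top of the general vanishing of $\Ext^i_H$ beyond the split rank of $H$.

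The vanishing claim is immediate from what was recalled earlier: the Bruhat-Tits building of $H$ gives a projective resolution of the trivial representation $\CC$ of length equal to the $F$-split rank of $H$, and this yields $\Ext^i_H[\pi_1,\pi_2] = 0$ in that range for any pair of smooth $H$-modules. So the real content is the finite dimensionality.

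Since $\pi_2$ is irreducible it belongs to a single Bernstein component $\mathfrak{B}$ attached to some cuspidal datum $(M,\rho)$, and $\Ext$-groups do not cross blocks, so
\[
\Ext^i_H[\pi_1,\pi_2] \;\cong\; \Ext^i_H\bigl[(\pi_1|_H)[M,\rho],\, \pi_2\bigr].
\]
By Theorem~\ref{AS} the $H$-module on the left side of this $\Hom$ is finitely generated. Next I would invoke Bernstein's structure theorem for $\mathfrak{B}$: via the functor of $K$-invariants for a suitably small compact open subgroup $K \subset H$, the block $\mathfrak{B}$ is equivalent to the category of modules over a Hecke algebra $\mathcal{H}_\mathfrak{B}$ which is finitely generated over its Noetherian (finitely generated commutative) center, and in particular is itself Noetherian. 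Under this equivalence, finitely generated objects of $\mathfrak{B}$ correspond to finitely generated $\mathcal{H}_\mathfrak{B}$-modules, while admissible representations correspond to modules whose finitely generated submodules are finite dimensional over $\CC$. Using the fact quoted in the excerpt — that a finitely generated module over a Noetherian ring admits a resolution by finitely generated projectives — I would resolve $(\pi_1|_H)[M,\rho]$ by a complex $P_\bullet$ of finitely generated projective objects of $\mathfrak{B}$, so that $\Hom_H[P_\bullet,\pi_2]$ becomes a complex of finite dimensional vector spaces whose cohomology is $\Ext^i_H[\pi_1,\pi_2]$.

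The only non-formal step is checking that $\Hom_H[P,\pi_2]$ is finite dimensional when $P$ is finitely generated projective in $\mathfrak{B}$ and $\pi_2$ is admissible; this is where admissibility of $\pi_2$ and the finiteness of $\mathcal{H}_\mathfrak{B}$ over its center do the real work, by reducing to the standard statement that $\Hom$ between a finitely generated and a finitely generated module over such an algebra is finite dimensional over $\CC$. Everything else, including preservation of finite generation by the Bernstein equivalence and the passage from resolutions in $\mathcal{H}_\mathfrak{B}$-modules back to resolutions in $\mathfrak{B}$, is a formal consequence of the Aizenbud-Sayag theorem and the Bernstein decomposition.
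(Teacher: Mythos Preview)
Your proposal is correct and follows essentially the same route as the paper's (one-sentence) argument: Aizenbud--Sayag gives local finite generation of $\pi_1|_H$, the Noetherian structure of each Bernstein block lets you resolve by finitely generated projectives, and the building handles the vanishing beyond the split rank. One small slip worth fixing: in your last paragraph, $\Hom$ between two \emph{finitely generated} modules over such an algebra is not in general finite dimensional over $\CC$ (take both equal to $\mathcal{H}_{\mathfrak B}$ itself); what you actually have, and what suffices, is that an irreducible admissible $\pi_2$ corresponds under the block equivalence to a \emph{finite-dimensional} $\mathcal{H}_{\mathfrak B}$-module $\pi_2^K$, and then $\Hom_{\mathcal{H}_{\mathfrak B}}(P,\pi_2^K)$ embeds into $(\pi_2^K)^m$ via any surjection $\mathcal{H}_{\mathfrak B}^m\twoheadrightarrow P$.
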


  We end the introduction by suggesting that although in this work we discuss exclusively
 the restriction problems arising
    in the GGP context, the notion of a locally finitely generated representation,
    and its becoming a projective module on restriction to  suitably chosen subgroups -- which is one of the properties
 emphasized   in this work -- should work
  well in many other situations involving finite multiplicities, such as the Weil representation and its restriction to dual reductive pairs which we
  briefly mention now. A criterion on locally finite generation, and projectivity, would be very welcome
  in the geometric context, say when a ($p$-adic) group $G$ acts on a ($p$-adic) space $X$ with an equivariant sheaf
  $\psi$, where one would like to understand  these questions  for the action of $G$ on the Schwartz space $\Sc(X,\psi)$.

  In the context of the Howe correspondence for a dual reductive pair $(G_1,G_2)$ inside $\Sp_{2n}(F)$,
  with $G_1$ ``smaller than or equal to'' $G_2$, with $K_1$, $K_2$ compact open subgroups in $G_1$ and $G_2$, 
    it appears that for the Weil representation $\omega$ of the metaplectic group ${\rm Mp}_{2n}(F)$, $\omega^{K_1 \times K_2}$ is a finitely generated module over both ${\mathcal H}(K_1\backslash G_1/K_1)$ and ${\mathcal H}(K_2\backslash G_2/K_2)$
    and is a projective module over ${\mathcal H}(K_1\backslash G_1/K_1)$. Moreover, it seems that
    one can use  $\omega^{K_1 \times K_2}$  as a bimodule
for ${\mathcal H}(K_1\backslash G_1/K_1)$ and ${\mathcal H}(K_2\backslash G_2/K_2)$
    to construct
    an embedding of the category of smooth representations of
${\mathcal H}(K_1\backslash G_1/K_1)$ 
     to the category of smooth representations of ${\mathcal H}(K_2\backslash G_2/K_2)$.
     Investigations on this ``functorial approach'' to the Howe correspondence seems not to have been undertaken so far. 
   
  \vspace{5mm}

\section{Branching laws from $\GL_{n+1}(F)$ to $\GL_n(F)$}

Recall
the following basic result  which is proved as a consequence of the Rankin-Selberg theory, cf. \cite{Pr2}.

\begin{thm} \label{duke93}Given 
an irreducible generic representation $\pi_1$ of $\GL_{n+1}(F)$, and an
irreducible generic representation $\pi_2$ of $\GL_{n}(F)$, 
\[\Hom_{\GL_n(F)}[\pi_1,\pi_2] \cong  \CC.\]
\end{thm}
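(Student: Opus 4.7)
The plan is to construct a nonzero element of $\Hom_{\GL_n(F)}[\pi_1,\pi_2]$ explicitly via the Jacquet--Piatetski-Shapiro--Shalika local Rankin-Selberg integrals on the (unique) Whittaker models of $\pi_1$ and $\pi_2^\vee$, and then to invoke the multiplicity one theorem of Aizenbud--Gourevitch--Rallis--Schiffmann \cite{AGRS} (which holds in the $\GL$ case as well) to force the dimension to be exactly one.

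First I would fix a nontrivial additive character $\psi\colon F\to\CC^\times$ and realize $\pi_1$ in its Whittaker model $\W(\pi_1,\psi)$ on $\GL_{n+1}(F)$, and $\pi_2^\vee$ (also generic) in its Whittaker model $\W(\pi_2^\vee,\psi^{-1})$ on $\GL_n(F)$; both models exist and are unique by genericity. For $W_1\in\W(\pi_1,\psi)$ and $W_2\in\W(\pi_2^\vee,\psi^{-1})$, consider the local zeta integral
\[
Z(s,W_1,W_2)=\int_{N_n(F)\backslash\GL_n(F)} W_1\!\begin{pmatrix} g & 0 \\ 0 & 1\end{pmatrix} W_2(g)\,\abs{\det g}^{s-1/2}\,dg.
\]
By the theory of Jacquet--Piatetski-Shapiro--Shalika this converges for $\mathrm{Re}(s)\gg 0$, admits meromorphic continuation to $\CC$, and the totality of such integrals (as $W_1,W_2$ vary) generates the fractional ideal $L(s,\pi_1\times\pi_2^\vee)\cdot\CC[q^{\pm s}]$. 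A direct change of variables shows that at the value $s=1/2$ the integrand is diagonally $\GL_n(F)$-equivariant (the twist $\abs{\det g}^{s-1/2}$ being trivial there), so $Z(1/2,\cdot,\cdot)$ defines a $\GL_n(F)$-invariant bilinear form on $\pi_1\otimes\pi_2^\vee$, equivalently an element of $\Hom_{\GL_n(F)}[\pi_1,\pi_2]$.

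To see this element is nonzero I would use that the local Rankin-Selberg L-factor $L(s,\pi_1\times\pi_2^\vee)$ is a product of Euler factors of the form $(1-\alpha q^{-s})^{-1}$ and hence has no zeros on $\CC$. By the description of the fractional ideal above, one can choose $W_1,W_2$ so that $Z(s,W_1,W_2)/L(s,\pi_1\times\pi_2^\vee)$ is identically $1$; evaluating at $s=1/2$ then produces a nonzero pairing, giving $\dim\Hom_{\GL_n(F)}[\pi_1,\pi_2]\geq 1$. The upper bound $\leq 1$ is then the multiplicity one theorem \cite{AGRS} for the pair $(\GL_{n+1},\GL_n)$, which requires no genericity hypothesis.

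The main obstacle is the second step: controlling the fractional ideal generated by the $Z(s,W_1,W_2)$ precisely enough to deduce non-vanishing at the particular point $s=1/2$. Establishing that this ideal is exactly $L(s,\pi_1\times\pi_2^\vee)\cdot\CC[q^{\pm s}]$ (rather than a proper subideal where the factor at $s=1/2$ might be killed) is the substantive analytic input, resting on the full machinery of $\gamma$-factors, the local functional equation, and the Bernstein--Zelevinsky classification of generic representations; the rest is a formal adjunction plus an invocation of \cite{AGRS}.
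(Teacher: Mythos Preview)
Your proposal is correct and is exactly the Rankin--Selberg argument the paper cites as the original source of this theorem (the reference \cite{Pr2}): construct a nonzero $\GL_n(F)$-invariant pairing from the JPSS local zeta integrals on Whittaker models, and bound the dimension above by multiplicity one. One small technical point you gloss over: if $L(s,\pi_1\times\pi_2^\vee)$ happens to have a pole at $s=1/2$, then $Z(1/2,W_1,W_2)$ is not literally defined and you must take the leading Laurent coefficient instead; this is still $\GL_n(F)$-invariant because the twist $\abs{\det g}^{s-1/2}$ contributes $1$ to leading order. This is standard and does not affect the argument.

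It is worth noting, though, that the paper does not reprove Theorem~\ref{duke93} along these lines; it merely quotes it and then emphasizes an \emph{alternative} homological route which is the real theme of the section. Namely, Theorem~\ref{whittaker} gives $\EP_{\GL_n(F)}[\pi_1,\pi_2]=\dim\Wh(\pi_1)\cdot\dim\Wh(\pi_2)=1$ via the Bernstein--Zelevinsky filtration and Lemma~\ref{vanishing}, and then Theorem~\ref{vanishing} (Chan--Savin) kills all higher $\Ext^i$ for generic $\pi_1,\pi_2$, so that $\dim\Hom=\EP=1$. Your Rankin--Selberg argument is constructive and self-contained but specific to this exact statement; the paper's homological approach is less explicit but more flexible, since the $\EP$ formula persists for arbitrary finite-length (not necessarily irreducible or generic) representations and immediately explains, via Remark~\ref{rem1}, why higher $\Ext$'s must be nonzero in the non-generic situations.
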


The following theorem can be considered as the Euler-Poincar\'e version of the above theorem and is much more flexibile than the previous theorem, and proved more easily!  

\begin{thm}\label{whittaker} Let
  $\pi_1$ be an admissible representation of  $\GL_{n+1}(F)$ of finite length, and 
$\pi_2$ an admissible representation of  $\GL_{n}(F)$ of finite length.   
Then, $\Ext^i_{\GL_n(F)}[\pi_1,\pi_2]$ are finite dimensional vector spaces over $\CC$, and 
$$\EP_{\GL_n(F)}[\pi_1,\pi_2] 
= \dim {\rm Wh}(\pi_1) \cdot \dim {\rm Wh}(\pi_2),$$
  where ${\rm Wh}(\pi_1)$, resp. ${\rm Wh}(\pi_2)$,  denotes the space of Whittaker models for $\pi_1$, resp. $\pi_2$, 
with respect to fixed non-degenerate characters on a maximal unipotent subgroup
in $\GL_{n+1}(F)$ and $\GL_n(F)$ respectively. 

\end{thm}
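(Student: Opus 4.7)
The starting point is to view $\pi_1$ as a module for the mirabolic subgroup $P_{n+1}(F) \subset \GL_{n+1}(F)$, noting the inclusion $\GL_n(F) \subset P_{n+1}(F)$ with $P_{n+1}(F) = \GL_n(F) \ltimes F^n$. Finite-dimensionality of the groups $\Ext^i_{\GL_n(F)}[\pi_1,\pi_2]$ is a direct consequence of Theorem \ref{AS}: the restriction $\pi_1|_{\GL_n(F)}$ is locally finitely generated, so in every Bernstein block it admits a resolution by finitely generated projective objects of finite length, and this forces finiteness of all $\Ext^i$ as well as vanishing beyond the split rank of $\GL_n(F)$.

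For the Euler--Poincar\'e computation, I would use the Bernstein--Zelevinsky filtration of $\pi_1$ as a $P_{n+1}(F)$-module,
\[
0 = \pi_1^{[n+1]} \subset \pi_1^{[n]} \subset \cdots \subset \pi_1^{[0]} = \pi_1,
\]
with successive quotients $\pi_1^{[k]}/\pi_1^{[k+1]} \cong (\Phi^+)^k \Psi^+(\pi_1^{(k+1)})$, where $\pi_1^{(k+1)}$ is the $(k+1)$-st BZ derivative, an admissible finite-length representation of $\GL_{n-k}(F)$, and $\pi_1^{(n+1)} = \Wh(\pi_1)$ at the bottom. Each piece $(\Phi^+)^k \Psi^+(\tau)$ is realized concretely as the compactly induced module $\ind_{R_k}^{P_{n+1}(F)}(\tau \otimes \psi_k)$, where $R_k$ has Levi $\GL_{n-k}(F)$ and $\psi_k$ is a nondegenerate character on the unipotent part of $R_k$. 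By additivity of $\EP$ across short exact sequences,
\[
\EP_{\GL_n(F)}[\pi_1,\pi_2] = \sum_{k=0}^{n} \EP_{\GL_n(F)}\bigl[(\Phi^+)^k \Psi^+(\pi_1^{(k+1)}),\pi_2\bigr].
\]

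The $k=n$ term, the Kirillov model, is $\ind_{N_{n+1}(F)}^{P_{n+1}(F)}(\psi) \otimes \Wh(\pi_1)$. Since $N_{n+1}(F) \cap \GL_n(F) = N_n(F)$ and $N_{n+1}(F)\cdot\GL_n(F) = P_{n+1}(F)$, the restriction to $\GL_n(F)$ is $\ind_{N_n(F)}^{\GL_n(F)}(\psi') \otimes \Wh(\pi_1)$, the Gelfand--Graev representation of $\GL_n(F)$ tensored with $\Wh(\pi_1)$. The Gelfand--Graev representation is a projective object in the category of smooth representations of $\GL_n(F)$, so higher $\Ext$'s vanish and Frobenius reciprocity gives
\[
\Hom_{\GL_n(F)}\bigl[\ind_{N_n(F)}^{\GL_n(F)}(\psi')\otimes \Wh(\pi_1),\, \pi_2\bigr] \cong \Wh(\pi_1)^* \otimes \Wh(\pi_2),
\]
contributing exactly $\dim\Wh(\pi_1)\cdot\dim\Wh(\pi_2)$ to the alternating sum.

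The remaining task, and \emph{the main obstacle}, is to show that each intermediate piece ($0 \le k < n$) contributes $0$ to the Euler--Poincar\'e characteristic. Applying Mackey to $\ind_{R_k}^{P_{n+1}(F)}(\tau\otimes\psi_k)\big|_{\GL_n(F)}$, one sees that the restriction is built from representations induced from proper parabolic-type subgroups of $\GL_n(F)$ equipped with nondegenerate characters on portions of their unipotent radicals, i.e.\ parabolically induced from a Gelfand--Graev-type module on a smaller Levi. Using second adjunction together with induction on $n$ (so that the inductive hypothesis on the smaller $\GL_m(F)$ already expresses things through Whittaker dimensions of derivatives), the EP of these pieces collapses to zero because the nondegenerate character on the nontrivial unipotent factor forces all the Whittaker dimensions to appear with opposite signs across the derivative filtration of $\pi_2$, or equivalently, the relevant Jacquet modules of $\pi_2$ carry no Whittaker vectors of the requisite type on the Levi. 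Summing over $k$ leaves only the $k=n$ contribution, establishing the claimed formula.
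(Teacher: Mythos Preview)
Your overall architecture---Bernstein--Zelevinsky filtration on $\pi_1|_{P_{n+1}}$, additivity of $\EP$ in exact sequences, and identification of the bottom piece with the Gelfand--Graev representation yielding $\dim\Wh(\pi_1)\cdot\dim\Wh(\pi_2)$---is exactly the paper's approach. The use of Theorem~\ref{AS} for finite-dimensionality is also fine.

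The gap is in your treatment of the intermediate pieces $0\le k<n$. Your proposed mechanism (``Whittaker dimensions appear with opposite signs across the derivative filtration of $\pi_2$'', or ``Jacquet modules of $\pi_2$ carry no Whittaker vectors of the requisite type'') is not the correct reason, and the second formulation is simply false in general: nothing prevents the Jacquet modules of $\pi_2$ from having Whittaker functionals. After Frobenius reciprocity (second adjointness) applied to the $k$-th piece, what one is left with is an expression of the form $\EP_{\GL_{n-k}(F)}[\pi_1^{(k+1)},\,\text{(a Jacquet module of }\pi_2)]$, an Euler--Poincar\'e pairing of two finite-length representations of $\GL_{n-k}(F)$ with $n-k\ge 1$. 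The vanishing comes from the elementary but crucial fact recorded as Lemma~\ref{vanishing} in the paper: for \emph{any} two finite-length representations $V,W$ of $\GL_d(F)$ with $d\ge 1$, one has $\EP_{\GL_d(F)}[V,W]=0$. This is a consequence of the non-compact center (twisting by unramified characters of the center leaves $\EP$ unchanged while generically killing all $\Ext^i$), and has nothing to do with Whittaker models of $\pi_2$. No induction on $n$ is needed once this lemma is in hand; each intermediate term vanishes outright.
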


Here is a curious corollary!

\begin{cor}
  { If
$\pi_1$ is an irreducible admissible representation of  $\GL_{n+1}(F)$, and 
$\pi_2$ an irreducible admissible representation of  $\GL_{n}(F)$, 
        then the only values taken by $\EP_{\GL_n(F)}[\pi_1,\pi_2]$ is 0 and  1,
    in particular it is $\geq 0$.}
  \end{cor}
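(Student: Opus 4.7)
The plan is to combine Theorem \ref{whittaker} with the classical multiplicity-one (uniqueness) theorem for Whittaker models on $\GL_n(F)$. By Theorem \ref{whittaker}, for $\pi_1,\pi_2$ irreducible admissible,
\[
\EP_{\GL_n(F)}[\pi_1,\pi_2] = \dim \Wh(\pi_1)\cdot \dim \Wh(\pi_2),
\]
so everything reduces to showing that each factor lies in $\{0,1\}$.

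The key input I would invoke is the Gelfand--Kazhdan theorem (uniqueness of Whittaker models): for an irreducible admissible representation $\pi$ of $\GL_m(F)$ and a fixed non-degenerate character $\psi$ of the maximal unipotent subgroup, $\dim \Hom_N(\pi,\psi) \leq 1$. Thus $\dim \Wh(\pi_1)\in\{0,1\}$ and similarly for $\pi_2$, so the product is in $\{0,1\}$, giving both the claimed bound and in particular non-negativity.

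The only subtlety worth flagging is the conventions: $\Wh(\pi_i)$ must denote the space of $(N,\psi)$-equivariant functionals (i.e.\ $\Hom_N(\pi_i,\psi)$), which is what Theorem~\ref{whittaker} uses; with this convention, uniqueness is exactly the Gelfand--Kazhdan statement. There is no real obstacle here since both ingredients (Theorem~\ref{whittaker} and the uniqueness theorem) are already in hand; the corollary is immediate once they are combined. If anything, the mildly surprising content of the statement is that the Euler--Poincar\'e characteristic is automatically non-negative even though individual $\Ext^i$'s can in principle be non-zero in various degrees; the proof makes clear that this positivity is inherited from the factorized form of $\EP$ through Whittaker functionals.
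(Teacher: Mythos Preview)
Your proposal is correct and is exactly the intended argument: the paper states the corollary without proof immediately after Theorem~\ref{whittaker}, leaving it as an immediate consequence of that theorem combined with Gelfand--Kazhdan uniqueness of Whittaker models for irreducible representations of $\GL_m(F)$. There is nothing to add.
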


{\bf Proof of Theorem  \ref{whittaker}}:  The proof of the Theorem \ref{whittaker} is  accomplished using some results of Bernstein and Zelevinsky, cf.
 \S3.5 of  \cite{BZ1},  regarding the
structure of representations of $\GL_{n+1}(F)$ restricted to the mirabolic subgroup.  

Recall that  $E_{n}$,  the mirabolic subgroup of $\GL_{n+1}(F)$, 
consists of matrices
in $\GL_{n+1}(F)$
whose last row is equal to $(0, 0,\cdots, 0, 1)$.  

For a representation $\pi$ of $\GL_{n+1}(F)$, Bernstein-Zelevinsky define
\[  \pi^i = \text{ the {\it i}-th derivative of $\pi$}, \]
which is a representation of $\GL_{n+1- i}( F)$.  
Of crucial importance is the fact that if $\pi$ is of finite length for $\GL_{n+1}(F)$,
then $\pi^i$ are representations of finite length of $\GL_{n+1-i}(F)$.

Bernstein-Zelevinsky prove that the restriction of an admissible representation $\pi$ of $\GL_{n+1}(F)$ to the mirabolic $E_n$ has a finite filtration whose successive quotients are described by the derivatives $\pi^i$ of $\pi$.  

Using the Bernstein-Zelevinsky filtration, and a form of Frobenius reciprocity
for Ext groups, Theorem \ref{whittaker} eventually follows from the following easy lemma. We refer to \cite{Pr3} for more details.  

\begin{lemma} \label{vanishing} If $V$ and $W$ are any two finite length representations of $\GL_d(F)$, then
  if $d>0$, $$\EP[V,W] = 0.$$  
  If $d=0$, then of course
\[\EP[V,W] = \dim V \cdot \dim W.\] 
\end{lemma}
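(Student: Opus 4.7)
The case $d = 0$ is immediate: $\GL_0(F)$ is the trivial group, so only $\Ext^0 = \Hom$ contributes, with dimension $\dim V \cdot \dim W$.

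For $d > 0$, write $G = \GL_d(F)$ and let $Z \cong F^\times \subset G$ denote the central torus of scalar matrices. I will exploit $Z$ through a twisting argument on the family
\[W_s \;:=\; W \otimes \lvert \det \rvert^s, \qquad s \in \CC.\]

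\emph{Step 1 (constancy in $s$).} Choose a finite projective resolution $P_\bullet \to V$ in smooth $G$-representations with each $P_i = \bigoplus_j \ind_{K_{ij}}^G \tau_{ij}$ a finite direct sum of modules compactly induced from \emph{compact} open subgroups $K_{ij}$; the Schneider--Stuhler resolution attached to the Bruhat--Tits building provides such a resolution, with the $K_{ij}$ being parahoric subgroups. Any compact subgroup $K \subset G$ has $\det K \subset \O_F^\times$, so $\lvert \det \rvert^s$ is trivial on $K$ and $W_s|_K = W|_K$. By Frobenius reciprocity,
\[\EP_G[V, W_s] \;=\; \sum_i (-1)^i \dim \Hom_G[P_i, W_s] \;=\; \sum_{i,j} (-1)^i \dim \Hom_{K_{ij}}[\tau_{ij}, W|_{K_{ij}}],\]
and the right-hand side is independent of $s$.

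\emph{Step 2 (generic vanishing).} Decompose $V$ and $W$ into finite direct sums according to generalized central characters; it suffices to treat pieces with single underlying central characters $\omega_V, \omega_W$ of $Z$. Since the central character of $W_s$ is $\omega_W \cdot \lvert \cdot \rvert^{ds}$, for all $s$ outside the discrete set where this coincides with $\omega_V$, $V$ and $W_s$ have mismatched central characters, forcing $\Ext^i_G[V, W_s] = 0$ for every $i$ and hence $\EP_G[V, W_s] = 0$. Combined with Step 1, $\EP_G[V, W] = 0$.

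The main obstacle is Step 1 when $V$ has cuspidal components: the natural projective generator of a cuspidal Bernstein block is $\ind_J^G \rho$ with $J = Z \cdot K$ compact-\emph{modulo}-center, and $\lvert \det \rvert^s$ is not trivial on such $J$. The Schneider--Stuhler resolution circumvents this by using parahoric (genuinely compact) subgroups. An alternative route for the cuspidal case: Frobenius gives $\Ext^i_G[V, W] = \Ext^i_J[\rho, W]$, and Hochschild--Serre for $K \triangleleft J$ (with $J/K \cong \Z$) reduces $\EP_J[\rho, W]$ to $\EP_\Z$ applied to the finite-dimensional space $\Hom_K[\rho, W]$, which vanishes because the kernel and cokernel of a linear endomorphism have equal dimension.
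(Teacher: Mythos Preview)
The paper does not actually prove this lemma in the text; it simply calls it ``easy'' and defers to \cite{Pr3} for details. Your argument is correct and is essentially the standard one: $\EP$ is invariant under unramified twisting of the target (seen via a finite resolution by modules compactly induced from genuinely compact open subgroups, as supplied by the Schneider--Stuhler building complex), while for all but finitely many values of $s$ the central characters of $V$ and $W_s$ are distinct, forcing every $\Ext^i$ to vanish. Your separate treatment of the cuspidal case via Hochschild--Serre along $K \lhd J$ with $J/K \cong \Z$ is also valid, though once you have committed to the Schneider--Stuhler resolution it is not needed: that resolution uses only parahoric (hence compact) stabilisers even for cuspidal $V$, so Step~1 already goes through uniformly.
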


The following result conjectured by the author some years ago, cf. \cite{Pr3},
and recently proved by Chan and Savin in \cite{CS2},
  is at the root of why the simple and general result in Theorem \ref{whittaker} above  
  translates into   a simple result about Hom spaces for generic representations
in Theorem \ref{duke93}.  

\begin{thm} \label{vanishing} Let $\pi_1$ be an irreducible generic representation of $\GL_{n+1}(F)$, and $\pi_2$ an
irreducible generic representation of $\GL_{n}(F)$. Then,
\[\Ext^i_{\GL_n(F)}[\pi_1,\pi_2] = 0, \]
for all $i > 0$. 
\end{thm}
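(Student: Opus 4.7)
The strategy is to combine Theorem~\ref{whittaker} with a projectivity argument in the spirit of Chan-Savin. Since $\pi_1$ and $\pi_2$ are irreducible generic, uniqueness of Whittaker models gives $\dim \Wh(\pi_1) = \dim \Wh(\pi_2) = 1$, so Theorem~\ref{whittaker} yields $\EP_{\GL_n(F)}[\pi_1,\pi_2] = 1$, while Theorem~\ref{duke93} gives $\dim \Hom_{\GL_n(F)}[\pi_1,\pi_2] = 1$. Subtraction shows $\sum_{i \geq 1}(-1)^i \dim \Ext^i_{\GL_n(F)}[\pi_1,\pi_2] = 0$; the task is to upgrade this cancellation to vanishing of each individual term.

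The cleanest route, and the one flagged in the abstract, is to prove the stronger statement that $\pi_1|_{\GL_n(F)}$ is a projective object in the Bernstein block of $\GL_n(F)$ containing $\pi_2$, from which $\Ext^i_{\GL_n(F)}[\pi_1,\pi_2]=0$ for $i>0$ is immediate. The natural tool is the Bernstein-Zelevinsky filtration of $\pi_1|_{E_n}$ by derivatives $\pi_1^{(i)}$, as used already in the proof of Theorem~\ref{whittaker}. Since $\pi_1$ is generic, $\pi_1^{(n+1)} \cong \CC$ is one-dimensional, so the top graded piece, after extending from the mirabolic $E_n$ to $\GL_n(F)$, is a twist of the Gelfand-Graev representation $\ind_{N_n,\psi}^{\GL_n(F)}(\CC)$. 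This Gelfand-Graev module is classically known to be projective in every Bernstein block that carries a generic representation, which is precisely the block of $\pi_2$; so this top piece contributes no higher Ext.

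The remaining graded pieces of the BZ filtration are compact inductions of the form $\ind_{Q_k}^{\GL_n(F)}\!\bigl(\pi_1^{(k)} \otimes \psi_k\bigr)$ from mirabolic-type subgroups $Q_k$, with $\pi_1^{(k)}$ a finite length representation of $\GL_{n+1-k}(F)$. Frobenius reciprocity converts their contribution to $\Ext^i_{\GL_n(F)}[\,\cdot\,,\pi_2]$ into Ext groups on smaller $\GL_{k-1}(F)$ involving $\pi_1^{(k)}$ and a Bessel- or Jacquet-type module of $\pi_2$. Since $\pi_2$ is generic, each such Bessel module again carries a distinguished generic constituent, and one can set up an induction on $n$, with the trivial base case $d=0$ of Lemma~\ref{vanishing}.

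The main obstacle is the bookkeeping: the pieces $\pi_1^{(k)}$ are only of finite length and are generally reducible, and similarly for the induced Bessel modules of $\pi_2$. To make the induction close, one must either formulate it at the level of finite-length representations of Whittaker type, or else carefully track the long exact sequences arising from the BZ filtration so that the Gelfand-Graev-type projectivity at each stage kills higher Ext's before they can appear. Granting these technical steps, all $\Ext^i_{\GL_n(F)}[\pi_1,\pi_2]$ for $i>0$ vanish, consistent with the Euler-Poincar\'e computation which then collapses to $\dim \Hom_{\GL_n(F)}[\pi_1,\pi_2] = 1$ on the nose.
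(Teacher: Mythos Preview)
The paper does not give its own proof of this theorem; it is stated as a result of Chan and Savin \cite{CS2}, conjectured earlier in \cite{Pr3}. So there is no in-paper argument to compare against directly, and your proposal must be judged on its own merits.

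There is a genuine gap in your strategy. You propose to prove the \emph{stronger} statement that $\pi_1|_{\GL_n(F)}$ is projective in the Bernstein block containing $\pi_2$, and then read off the vanishing. But that stronger statement is false in general. By Theorem~\ref{CS}(1) (also due to Chan--Savin), the restriction of a generic $\pi_1$ is projective in a given block if and only if \emph{every} irreducible quotient of $\pi_1$ lying in that block is generic. The paper itself supplies counterexamples: Remark~\ref{rem1} exhibits irreducible generic $\pi_1$ on $\GL_3(F)$ with the trivial representation of $\GL_2(F)$ as a quotient, and Theorem~\ref{thmchan} with Remark~\ref{rem3} shows that even tempered $\pi_1 = \St_d \times \chi\St_d$ on $\GL_{2d}(F)$ has non-generic quotients. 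In such cases the block containing your generic $\pi_2$ may well also contain a non-generic quotient of $\pi_1$, so $\pi_1$ is \emph{not} projective there, yet Theorem~\ref{vanishing} still demands $\Ext^i[\pi_1,\pi_2]=0$ for $i>0$. The vanishing therefore cannot be obtained from block-projectivity, and the abstract you invoke says only that projectivity holds on \emph{most} blocks, not all.

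Concretely, your BZ-filtration argument must break down in the lower-derivative pieces: those pieces can and do produce non-generic quotients, so they are not projective against the whole block, and the ``bookkeeping'' you flag as the main obstacle is not a technicality but the entire content of the theorem. The induction you sketch does not close because the derivatives $\pi_1^{(k)}$ are neither irreducible nor generic, so the inductive hypothesis (stated for irreducible generic pairs) does not apply to them. The actual Chan--Savin proof must exploit the genericity of the \emph{target} $\pi_2$ in a way that goes beyond projectivity of the source.
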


On the other hand, Theorem \ref{whittaker} also has implications for the non-vanishing of (higher) Ext groups in certain cases that we discuss in the following remark.

\begin{remark} \label{rem1}
  One knows, cf. \cite{Pr2},  that there are irreducible generic representations  of $\GL_{3}(F)$ which have the trivial
  representation of $\GL_2(F)$ as a quotient;  similarly, there are
  irreducible non-generic representations of $\GL_{3}(F)$   with irreducible generic
  representations of $\GL_2(F)$ as a quotient.   For such pairs $(\pi_1,\pi_2)$ of representations, it
  follows from Theorem \ref{whittaker} on Euler-Poincar\'e characteristic that   
 \[\EP_{\GL_2(F)}[\pi_1, \pi_2]=0,\]  
 whereas
 \[\Hom_{\GL_2(F)}[\pi_1, \pi_2] \not = 0.\]  
 Therefore, for such pairs $(\pi_1,\pi_2)$
  of irreducible representations,  we must have 
  \[\Ext^i_{\GL_2(F)}[\pi_1, \pi_2] \not =0,\]
  for some $i>0$. The paper \cite{GGP2} studies more generally branching problem $\Hom_{\GL_n(F)}[\pi_1,\pi_2]$
  when one of the irreducible representations,  $\pi_1$ of $\GL_{n+1}(F)$ or $\pi_2$ of $\GL_n(F)$,  is not generic,  and both are parabolically induced from unitary Speh modules of
  Levi subgroups, i.e., are representations of the Arthur type, thus leading to
  non-vanishing of higher Ext groups.
\end{remark}

\section{An erratum to Kunneth Theorem from \cite{Pr3}}

It has been pointed out to me by G. Savin that in  the generality in which the Kunneth theorem is
asserted in \cite{Pr3}, it is not true. As the Kunneth theorem  plays an important role in many inductive arguments,
I take the occasion to state and prove a  corrected version here. The proof is only a minor
addition to what was given in \cite{Pr3}, but I  give a few more details here than there.

During the course of the proof of the Kunneth theorem, we will need to use the following 
most primitive form of  Frobenius reciprocity as was used in \cite{Pr3}.

\begin{lemma} \label{Frob}Let $K$ be an open subgroup of a $p$-adic group $G$. Let $E$ be a smooth representation of 
$K$, and $F$ a smooth representation of $G$. Then,
$$\Hom_G[ {\rm ind}_{K}^GE, F] \cong \Hom_{K} [E, F].$$
\end{lemma}

\begin{cor}
  For $K$ any compact open subgroup of a $p$-adic group $G$, and $E$ any smooth representation of $K$,
  ${\rm ind}_{K}^G E$ is a projective module for $G$.
  \end{cor}

We will need to use the following lemma in the next theorem.

\begin{lemma} \label{projective-product}
Let 
 $G_1$ and $G_2$ be two $p$-adic groups. Let $P_1, P_2$ be any two 
smooth projective modules for $G_1$, and $G_2$ respectively. Then  $P_1 \boxtimes P_2$ is a projective module for $G_1 \times G_2$.
\end{lemma}

\begin{proof}
  A slight subtlety in the proof of this lemma is that if $\tau$ is a smooth representation of $G_1 \times G_2$, though we have:
  \[ \Hom_{G_1\times G_2}(P_1 \boxtimes P_2, \tau) =  \Hom_{G_1}(P_1, \Hom_{G_2}( P_2, \tau))  =  \Hom_{G_1}(P_1, \Hom_{G_2}( P_2, \tau)^\infty),  \]
  which would seem to prove the lemma except that $\Hom_{G_2}( P_2, \tau)$ is not necessarily a smooth module for $G_1$, and
  taking smooth vectors $\pi \rightarrow \pi^\infty$ is not an exact functor on the category of $G_1$-modules.

  However, as we show, if $P_2$ is a smooth projective module for $G_2$,
  \[\tau \rightarrow \Hom_{G_2}( P_2, \tau)^\infty,\]
  is an exact functor from the category of smooth $(G_1\times G_2)$-modules to the category of smooth $G_1$-modules, i.e.,
  if $\tau_1\rightarrow \tau_2$ is a
  surjective map  of $(G_1\times G_2)$-modules,  so is  the induced map $\Hom_{G_2}( P_2, \tau_1)^\infty \rightarrow  \Hom_{G_2}( P_2, \tau_2)^\infty$,
  on $G_1$-smooth vectors in the corresponding Hom spaces.

  We prove that if  $\phi_2 \in \Hom_{G_2}( P_2, \tau_2)^\infty$ is $K_1$-invariant for $K_1$ a compact open subgroup in $G_1$,
  then there exists  $\phi_1 \in \Hom_{G_2}( P_2, \tau_1)^\infty$ which  is $K_1$-invariant whose image under the given map
  $\tau_1\rightarrow \tau_2$ is $\phi_2$.
  
  Note that as  $\tau_1\rightarrow \tau_2$ is a surjective map of smooth $(G_1\times G_2)$-modules,
  so is the induced map  $\tau_1^{K_1}\rightarrow \tau_2^{K_1}$ of  $G_2$ modules, and that
  there is an identification of $ \Hom_{G_2}( P_2, \tau_2)^{K_1}$ with
  $ \Hom_{G_2}( P_2, \tau_2^{K_1})$.
Now, the commutative diagram
\[  \begin{CD}
 \Hom_{G_2}( P_2, \tau_1^{K_1})   @>\text{}>>  \Hom_{G_2}( P_2, \tau_1)   \\
@V\text{}VV  @VVV  \\
 \Hom_{G_2}( P_2, \tau_2^{K_1})  @>>\text{}>   \Hom_{G_2}( P_2, \tau_2),  \end{CD} \]
in which the left vertical arrow is surjective by the projectivity of $P_2$ as a $G_2$-module, completes the proof of the assertion that
 if $P_2$ is a smooth projective module for $G_2$,
  \[\tau \rightarrow \Hom_{G_2}( P_2, \tau)^\infty,\]
  is an exact functor from the category of smooth $(G_1\times G_2)$-modules to the category of smooth $G_1$-modules. Hence, by the identification,
  \[ \Hom_{G_1\times G_2}(P_1 \boxtimes P_2, \tau) =  \Hom_{G_1}(P_1, \Hom_{G_2}( P_2, \tau))  =  \Hom_{G_1}(P_1, \Hom_{G_2}( P_2, \tau)^\infty),  \]
$P_1 \boxtimes P_2$ is a projective module for $G_1\times G_2$.
\end{proof}

\begin{remark}
  Considerations similar to the proof of Lemma \ref{projective-product} were made in the proof of Lemma 5.14 in \cite{APS}, which
  proved that $\Ext^i_{G_2}(\tau,X)^{K_1} = \Ext^i_{G_2}(\tau^{K_1}, X)$ for $\tau$ a smooth representation of $G_1 \times G_2$, and $X$ of $G_2$,
  and where $K_1 \subset G_1$ is a compact open subgroup.
\end{remark}

\begin{thm} \label{kunneth} Let $G_1$ and $G_2$ be two $p$-adic groups. Let $E_1, F_1$ be any two 
smooth representations of $G_1$, and $E_2,F_2$ be any two smooth representations
of $G_2$. Assume  that one of the following two
conditions hold:

\begin{enumerate}
\item Both the representations $E_1$ and $F_1$ of $G_1$ have finite lengths, and  that $G_1$ is a reductive $p$-adic group. Or,
\item The representation $E_1$ of $G_1$ and $E_2$ of $G_2$ have finite lengths,
  and both the groups $G_1,G_2$ are  reductive $p$-adic groups.
\end{enumerate}
Then,
\[\Ext^i_{G_1 \times G_2}[E_1\boxtimes E_2, F_1\boxtimes F_2] \cong \bigoplus_{i=j+k} \Ext^j_{G_1}[E_1,F_1] \otimes \Ext^k_{G_2}[E_2,F_2].\]
\end{thm}

\begin{proof} If $P_1$ is a projective module for $G_1$, and $P_2$ a projective module for $G_2$, then
 by Lemma \ref{projective-product},   $P_1 \boxtimes P_2$ is a projective module for $G_1 \times G_2$.

Let
$$\cdots \rightarrow P_1 \rightarrow P_0 \rightarrow E_1 \rightarrow 0,$$ 
$$\cdots \rightarrow Q_1 \rightarrow Q_0 \rightarrow E_2 \rightarrow 0,$$
be a projective resolution for $E_1$ as a $G_1$-module, and  a projective resolution
for $E_2$ as a $G_2$-module.

It follows that the tensor product of these two exact sequences: 
$$\cdots \rightarrow P_1\boxtimes Q_0 + P_0 \boxtimes Q_1  \rightarrow P_0\boxtimes Q_0  \rightarrow E_1 \boxtimes E_2 
\rightarrow 0,$$ 
 is a projective resolution of $E_1 \boxtimes E_2$. Therefore, 
$\Ext^i_{G_1 \times G_2}[E_1\boxtimes E_2, F_1\boxtimes F_2]$ can be calculated by taking the cohomology of the 
chain complex  \[\Hom_{G_1 \times G_2}[\bigoplus_{i+j = k} P_i\boxtimes Q_j, F_1\boxtimes F_2].\]

At this point, we use the well-known result that as $G_1$ is a reductive $p$-adic group,
the category of smooth representations
of $G_1$ is noetherian, i.e., any submodule of a finitely generated module is finitely generated, cf. Theorem 4.19 of \cite{BZ1}.
(This is the only place in this proof which uses $G_1$ to be reductive, perhaps it is not necessary?)
As $E_1$ is of finite
length, hence finitely generated, there is a  surjection from a finitely generated projective module
$P_0 \rightarrow E_1 \rightarrow 0$ with  $P_0$ a sum of finitely generated modules of the form
$ {\rm ind}_{K_{0k}}^{G_1}(W_{0k})$.  The kernel of the map $P_0 \rightarrow E_1$ is a
finitely generated $G$-module, hence again there is a map $P_1\rightarrow P_0$
with  $P_1$ a sum of finitely generated modules of the form $ {\rm ind}_{K_{1k}}^{G_1}(W_{1k})$ whose image is the kernel of
the mapping $P_0 \rightarrow E_1$. Iterating this process, we get a projective  resolution
of $E_1$ by finitely generated projective modules $P_i=  \sum_k {\rm ind}_{K_{ik}}^{G_1}(W_{ik}) $,
and thus the projective resolution of $E_1$ is:
\[\cdots \rightarrow P_1= \sum_j {\rm ind}_{K_{1k}}^{G_1} (W_{1k})  \rightarrow P_0 = \sum_k {\rm ind}_{K_{0k}}^{G_1} (W_{0k}) \rightarrow E_1 \rightarrow 0.\]

Assume that we are in the case (1) of the theorem, thus $E_1,F_1$ are of finite lengths, in particular $F_1$ is admissible.

Now since $F_1$ is admissible, $\Hom_{K_{ik}} [W_{ik}, F_1]$ is finite dimensional, hence there is a $K_{ik}$-invariant
subspace $F_{1ik} \subset F_1$ with $\dim(F_{1ik})$ finite dimensional
such that  $\Hom_{K_{ik}} [W_{ik}, F_{1ik}] = \Hom_{K_{ik}} [W_{ik}, F_{1}]$.

Since $W_{ik}, F_{1ik}$ are finite dimensional, we have the
isomorphism
\begin{eqnarray*} \Hom_{K_{ik} \times G_2}[W_{ik} \boxtimes Q_j, F_1\boxtimes F_2] & \cong & 
  \Hom_{K_{ik} \times G_2}[W_{ik} \boxtimes Q_j, F_{1ik}\boxtimes F_2] \\
  & \cong &  \Hom_{K_{ik}} [W_{ik}, F_{1ik}] \otimes \Hom_{G_2}[Q_j, F_2], \\
  & \cong &  \Hom_{K_{ik}} [W_{ik}, F_1] \otimes \Hom_{G_2}[Q_j, F_2],
\end{eqnarray*}
where it is in the second isomorphism that we use  the finite dimensionality of  $W_i, F_{1ik}$ in the corresponding statement about vector spaces:
\[\Hom[W_{ik} \boxtimes Q_j, F_{1ik}\boxtimes F_2] \cong  \Hom [W_{ik}, F_{1ik}] \otimes \Hom[Q_j, F_2].\]

As $P_i = \sum_k {\rm ind}_K^{G_1}(W_{ik}),$ we have:
\begin{eqnarray*} 
\Hom_{G_1 \times G_2}[ P_i\boxtimes Q_j, F_1\boxtimes F_2] & = 
& \sum_k  \Hom_{G_1 \times G_2}[{\rm ind}_{K_{ik}}^{G_1}(W_{ik})
  \boxtimes Q_j, F_1\boxtimes F_2] \\
&\cong & \sum_k \Hom_{K_{ik} \times G_2}[W_{ik} \boxtimes Q_j, F_1\boxtimes F_2] \\
&\cong &  \sum_k \Hom_{K_{ik}} [W_{ik}, F_1] \otimes \Hom_{G_2}[Q_j, F_2] \\
&\cong &  \Hom_{G_1} [P_i, F_1] \otimes \Hom_{G_2}[Q_j, F_2]. 
\end{eqnarray*}

We have a similar isomorphism in case (2) of the theorem when 
the representation $E_1$ of $G_1$ and $E_2$ of $G_2$ have finite lengths
in which case we resolve both $E_1$ and $E_2$ by finitely generated
$G_1$ and $G_2$ modules just as we resolved $E_1$ for $G_1$ above.

Thus we are able to identify the chain complex  $\Hom_{G_1 \times G_2}[\bigoplus_{i+j = k} P_i\boxtimes Q_j, F_1\boxtimes F_2]$ as the tensor product of the chain complexes $\Hom_{G_1}[P_i,F_1]$ and $\Hom_{G_2}[Q_j,F_2]$.
Now the abstract Kunneth theorem which calculates the cohomology of the tensor product of two chain complexes
in terms of the cohomology of the individual chain complexes completes the proof of  the theorem. \end{proof}

\begin{cor} \label{kunneth-cor}
Let $G_1$ and $G_2$ be two $p$-adic groups. Let $E_1, F_1$ be any two 
smooth representations of $G_1$, and $E_2,F_2$ be any two smooth representations
of $G_2$. Assume  that one of the two 
conditions in Theorem \ref{kunneth} hold. Then if $\EP_{G_1}[E_1,F_1]$ and  $\EP_{G_2}[E_2,F_2]$ are defined
(i.e., the corresponding $\Ext$ groups are finite dimensional and zero for large enough degree), then
$\EP_{G_1 \times G_2}[E_1\boxtimes E_2, F_1\boxtimes F_2]$ is defined (i.e., the $\Ext$ groups are finite dimensional and
zero for large enough degree), and we have:
\[\EP_{G_1 \times G_2}[E_1\boxtimes E_2, F_1\boxtimes F_2] = \EP_{G_1}[E_1,F_1] \cdot \EP_{G_2}[E_2,F_2].\]
  \end{cor}
\section{Bessel subgroup} \label{bessel}
We will use Bessel subgroups, and Bessel models without defining them referring the reader to \cite{GGP},
except to recall that
    these are defined for the classical groups $\GL(V), \SO(V), \U(V)$, through a subspace $W\subset V$,
    with $V/W$ odd dimensional which in the case of $\SO(V)$ will be a 
    split quadratic space. In this paper we will use these subgroups only for $\SO(V)$; the case of other groups is very similar
    with mostly only notational changes.
    The Bessel subgroup $\Bes(V,W)$ (shortened to $\Bes(W)$ if $V$ is understood)
    is a subgroup of $\SO(V)$ of the form
    $\SO(W) \ltimes U= \SO(W) \cdot U $ where  $U$ is a unipotent subgroup of $\SO(V)$ normalised by $\SO(W)$ which comes 
    with a character  $\psi: U \rightarrow \CC^\times$ which is invariant under $\SO(W)$.
    The Bessel subgroup $\Bes(V,W) = \SO(W)$ if $ \dim (V/W)=1$. For a representation $\rho$ of $\SO(W)$,
    we denote by $\rho \otimes \psi$ the corresponding representation of $\Bes(W) = \SO(W) \cdot U $.
    Throughout the paper, we will use the notation $\Hom_{\Bes(V,W)}[\pi,\rho]$ for $\pi$ a finite length smooth representation
    of $\SO(V)$, and $\rho$ that of $\SO(W)$, for the following space of homomorphisms:
    \[ \Hom_{\Bes(V,W)}[\pi,\rho] = \{\ell: \pi \rightarrow \rho| \ell(su \cdot v) =
    \rho(s) \psi(u) \ell(v) \},\]
    for all $v\in \pi,s \in \SO(W),$ and  $u\in U$, where $\Bes(V,W) = \SO(W) \ltimes U $.

    The representation $\ind_{\Bes(V,
      W)}^{\SO(V)} (\rho \otimes \psi)$ of $\SO(V)$ 
will be called  a Gelfand-Graev-Bessel representation, and plays a prominent role in analysing the
restriction problem from $\SO(V^+)$ to $\SO(V)$ for $V^+$ a quadratic space containing  $V$ as a subspace
of codimension 1 such that $V^+/W$ is a split quadratic space of even dimension. By Frobenius reciprocity,
we have:

\[ \Hom_{\Bes(V,W)}[\pi, \rho^\vee] = \Hom_{\SO(V)}[ \ind_{\Bes(V,W)}^{\SO(V)} (\rho \otimes \psi), \pi^\vee]. \]

    \begin{prop} \label{proj} If $\rho$ is a finite length  representation of $\SO(W)$,
      then  the Gelfand-Graev-Bessel representation, \[ \ind_{\Bes(V,W)}^{\SO(V)} (\rho \otimes \psi),\]
      is a
      locally finitely generated representation of $\SO(V)$ which is 
      projective if, further, $\rho$ is cuspidal representation of $\SO(W)$ and $\SO(W)$ is not the split
      $\SO_2(F)$.
    \end{prop}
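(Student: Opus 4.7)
The plan is to prove both assertions by Frobenius reciprocity for compact induction from the closed unimodular subgroup $\Bes(W)$ of $\SO(V)$, together with the semidirect product decomposition $\Bes(W) = \SO(W) \ltimes U$. For any smooth representation $\pi$ of $\SO(V)$, these give the chain of natural isomorphisms
\[
\Hom_{\SO(V)}\bigl(\ind_{\Bes(W)}^{\SO(V)}(\rho \otimes \psi),\pi\bigr) \cong \Hom_{\Bes(W)}\bigl(\rho \otimes \psi,\pi|_{\Bes(W)}\bigr) \cong \Hom_{\SO(W)}\bigl(\rho,\pi^{U,\psi}\bigr),
\]
where $\pi^{U,\psi}$ denotes the maximal quotient of $\pi$ on which $U$ acts through $\psi$ (the twisted Jacquet module), viewed as a smooth representation of $\SO(W)$ since $\SO(W)$ normalizes $U$ and fixes $\psi$. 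Both isomorphisms rest on the fact that $\SO(V)$, $\Bes(W)$, and $U$ are unimodular, so that modular characters do not intervene.

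For the projectivity assertion under the cuspidality hypothesis, the right-hand composite is a composition of two exact functors. First, the twisted Jacquet functor $\pi \mapsto \pi^{U,\psi}$ is exact; this is a variant of the standard fact that twisted coinvariants by a unipotent subgroup form an exact functor, and it takes smooth representations of $\SO(V)$ to smooth representations of $\SO(W)$. Second, $\Hom_{\SO(W)}(\rho,-)$ is exact because cuspidal representations of a reductive $p$-adic group are projective objects in the category of smooth representations, a basic fact from Bernstein--Zelevinsky theory. Exactness of the composite then gives projectivity of $\ind_{\Bes(W)}^{\SO(V)}(\rho \otimes \psi)$ as an $\SO(V)$-module.

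For the local finite generation assertion (without the cuspidality hypothesis) I would fix a Bernstein component $\mathfrak{s}$ of $\SO(V)$ and show that the $\mathfrak{s}$-isotypic summand of the induced representation is a finitely generated $\SO(V)$-module. Using the adjunction above and the compatibility of the twisted Jacquet functor with the Bernstein decomposition, only finitely many components of $\SO(W)$ can contribute to $\pi^{U,\psi}$ as $\pi$ ranges over the component $\mathfrak{s}$; in each such component the image is controlled by the corresponding Jacquet-type restriction of $\rho$, which has finite length by hypothesis. Combined with standard d\'evissage along a Jordan--H\"older series of $\rho$, this reduces everything to the irreducible, and ultimately cuspidal, case, where projectivity proved in the previous paragraph immediately implies local finite generation.

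The main obstacle I anticipate lies in making the exactness of $\pi \mapsto \pi^{U,\psi}$ entirely rigorous in the Bessel setting: for Whittaker-type characters on the unipotent radical of a Borel subgroup the identification of $\psi$-invariants with $\psi$-coinvariants and the exactness are classical, but for the intricate unipotent $U$ appearing in a general Bessel subgroup (with the semisimple part $\SO(W)$ normalizing it) one must invoke a suitable geometric or representation-theoretic lemma in the style of Bernstein--Zelevinsky second adjointness, or Chan--Savin's direct analysis. Once that exactness is in place, both assertions of the proposition fall out of the Frobenius reciprocity identity stated above.
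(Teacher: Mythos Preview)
Your central adjunction
\[
\Hom_{\SO(V)}\bigl(\ind_{\Bes(W)}^{\SO(V)}(\rho\otimes\psi),\pi\bigr)\;\cong\;\Hom_{\Bes(W)}(\rho\otimes\psi,\pi|_{\Bes(W)})\;\cong\;\Hom_{\SO(W)}(\rho,\pi_{U,\psi})
\]
does not hold, and this is precisely the ``slightly delicate point'' the paper pauses to emphasize. Compact induction from a closed, non-open subgroup is \emph{not} left adjoint to restriction, regardless of unimodularity. Already for $G=\GL_2(F)$, $H=N$, $\sigma=\psi$, one has $\Hom_G(\ind_N^G\psi,\pi)\neq 0$ for every irreducible generic $\pi$, whereas $\Hom_N(\psi,\pi|_N)$ (the $\psi$-eigenspace in $\pi$) vanishes. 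If instead you write $\psi$-\emph{coinvariants} on the right, the second isomorphism fails: a map into $\pi$ with prescribed $U$-behaviour lands in $\psi$-invariants, not in the quotient $\pi_{U,\psi}$. In short, the exact functor $\pi\mapsto\pi_{U,\psi}$ is the right adjoint picture and gives injectivity of $\Ind_U^G\psi$; it says nothing about projectivity of $\ind_U^G\psi$. You have also misplaced the obstacle: exactness of the twisted Jacquet functor is standard and is not the issue.

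The paper's proof is organized quite differently. Local finite generation is obtained not by d\'evissage on $\rho$ but by realizing $\ind_{\Bes(W)}^{\SO(V)}(\rho\otimes\psi)$ as a sub\-module of $(\tau\times\rho)|_{\SO(V)}$ for a suitable parabolically induced representation of a one-step-larger group $\SO(V^+)$, via the Moeglin--Waldspurger filtration, and then invoking the Aizenbud--Sayag finiteness theorem for spherical restrictions together with noetherianity of the Bernstein center. Projectivity is then proved, following Chan--Savin, by showing $\Ext^i_{\SO(V)}\bigl(\ind_{\Bes(W)}^{\SO(V)}(\rho\otimes\psi),\sigma\bigr)=0$ for $i>0$: one first reduces from arbitrary $\sigma$ to finite-length $\sigma$ using the finite generation just established and a completion argument over the Bernstein center (Proposition~5.2 of \cite{NP}), and only then carries out the actual vanishing computation as in \cite{CS3}. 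Your route, as written, does not reach either conclusion.
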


    \begin{proof}
      Projectivity of the Gelfand-Graev representation for any quasi-split group is due to Chan and Savin in the appendix to the
      paper \cite{CS3}. Let us remind ourselves a slightly delicate point. 
      By exactness of $U$-coinvariants, what is obvious is that $\Ind_{U}^{G} ( \psi)$ is an injective module for
      $U$ any unipotent subgroup of a reductive group $G$.
      That the dual of a projective module is an injective module is a generality, but this does not prove that
      $\ind_{U}^{G} ( \psi)$ is projective!

      Instead of directly proving that  $\ind_{U}^{G} ( \psi)$ is projective, Chan and Savin
      prove that $\Ext_G^i[\ind_{U}^{G} ( \psi), \sigma] = 0$ for all $\sigma$ and all $i > 0$. By generalities,  for  algebras ${\mathcal H}$ containing a finitely generated $\CC$-algebra  $Z$ in its center over which
      ${\mathcal H}$ is finitely generated as a   $Z$-module, for a finitely generated ${\mathcal H}$-module $M$,
      $\Ext_{\mathcal H}^i[M,N] = 0$ for $i>0$ and for all $N$,  if and only if this is true for
      finitely generated ${\mathcal H}$-module  $N$ and eventually  $\Ext_{\mathcal H}^i[M,N] = 0$ for $i>0$ and for all $N$,  if and only if $\Ext_{\mathcal H}^i[M,N] = 0$ for $i>0$ and for all $N$ of  finite length as an ${\mathcal H}$-module.
      (Clearly,  only irreducible $N$ are adequate!)

      Going from finitely generated to finite length is a generality that Chan and Savin discuss, and is also a consequence of
      Proposition 5.2 of
      \cite{NP} according to which 
  \[\Ext_{\mathcal H}^i[M,N]
  \otimes_Z \widehat{Z} \cong \Ext_{\mathcal H}^i[M,\widehat{N}]
  \cong \lim_{\leftarrow}\Ext_{\mathcal H}^i[M,N/{\mathfrak m} ^nN],\]
  where $\widehat{N} = \displaystyle{ \lim_{\leftarrow}}(N/{\mathfrak m}^nN)$. Therefore if 
$\Ext_{\mathcal H}^i[M,N] \not = 0$, $\Ext_{\mathcal H}^i[M,N/{\mathfrak m} ^nN] \not = 0$ for some $n\geq 1$.

  For all this, finite generation of
  $M$ is essential for which Chan and Savin quote the paper \cite{Bu-He} which proves that the Gelfand-Graev
  representations are locally finitely generated.

  In our case, we can appeal to Theorem \ref{AS} of Aizenbud-Sayag to prove  that
  the Gelfand-Graev-Bessel representation $ \ind_{\Bes(W)}^{\SO(V)} (\rho \otimes \psi)$ are locally
  finitely generated which we now elaborate upon; the rest of the argument of Chan-Savin in \cite{CS3} goes verbatim. Note of course that just as 
$\Ind_{U}^{G} ( \psi)$ is an injective module for
  $U$ any unipotent subgroup of a reductive group $G$,
  $ \Ind_{\Bes(W)}^{\SO(V)} (\rho \otimes \psi)$ is an injective module for $\SO(V)$
  if $\rho$ is cuspidal representation of $\SO(W)$ and $\SO(W)$ is not the split
      $\SO_2(F)$.

  Let $V^+= V + L$ where $L$ is a one dimensional quadratic space such that
  $V^+  = X + W + Y$ for $X,Y$ isotropic, perpendicular to $W$. Consider the representation  $\tau \times \rho$ of
  $\SO(V^+)$, a 
  parabolically induced representation of $\SO(V^+)$ from the parabolic with Levi subgroup $\GL(X) \times \SO(W)$ of the representation
  $\tau \boxtimes \rho$ where $\tau$ is any cuspidal representation of $\GL(X)$.  The standard Mackey theory applies to consider
  the restriction of the representation  $\tau \times \rho$ of
  $\SO(V^+)$ to $\SO(V)$ for which the open orbit contribution of $\tau \times \rho$ gives
  $ \ind_{\Bes(W)}^{\SO(V)} (\rho \otimes \psi)$, see Theorem 15.1 of \cite{GGP}. This realises $ \ind_{\Bes(W)}^{\SO(V)} (\rho \otimes \psi)$ as a
  submodule of  the representation  $\tau \times \rho$ of
$\SO(V^+)$ restricted to $\SO(V)$ which is locally finitely generated
by Theorem \ref{AS} of Aizenbud-Sayag. Since the rings which govern a Bernstein block are Noetherian rings, submodules of
  locally finitely generated representations are locally finitely generated, proving the proposition.   \end{proof}

    Note a particular case of this proposition.
    
    \begin{cor}\label{split-SO(W)}
      If $W\subset V$ is a codimension one subspace of $V$, a quadratic space, and $\rho$
      a finite length  representation of $\SO(W)$, then $\ind_{\SO(W)}^{\SO(V)} (\rho)$ is a locally finitely
      generated representation of $\SO(V)$
      which is projective if $\rho$ is cuspidal
      (and if $\dim(W)=2$, $W$ is not split). Also, similar assertions
      for $\GL_n(F), \U_n(F)$.
      \end{cor}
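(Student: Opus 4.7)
The plan is to deduce this corollary directly from Proposition \ref{proj}, specialized to the case $\dim(V/W)=1$. As already noted in the paragraph defining Bessel subgroups, when $V/W$ is one-dimensional the Bessel subgroup $\Bes(V,W)$ is just $\SO(W)$ itself: the unipotent radical $U$ collapses and the auxiliary character $\psi$ is trivial. Thus
\[ \ind_{\Bes(W)}^{\SO(V)}(\rho\otimes\psi) \;=\; \ind_{\SO(W)}^{\SO(V)}(\rho), \]
and both the local finite generation assertion and the projectivity (for cuspidal $\rho$) are inherited verbatim from the proposition.

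The one point that needs comment is the parenthetical exclusion when $\dim(W)=2$ and $W$ is split. In that situation $\SO(W)\cong\Gm$ is a one-dimensional split torus, so on the one hand ``cuspidal'' degenerates to ``arbitrary character'', and on the other hand the proof of Proposition \ref{proj} constructs an auxiliary $V^+=V+L$ and requires a decomposition $V^+ = X + W + Y$ with $X,Y$ isotropic and orthogonal to $W$; this forces a hyperbolic plane transverse to $W$, and the low-rank split case has to be handled separately (or simply avoided, as is done here). I would flag this as the only genuine obstacle, and it is exactly the obstacle that the hypothesis is placed to sidestep.

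For the ``similar assertions'' for $\GL_n(F)$ and $\U_n$, the strategy is parallel. The local finite generation is supplied by the Aizenbud-Sayag result (Theorem \ref{AS}) for the spherical pairs $(\GL_{n+1},\GL_n)$ and $(\U_{n+1},\U_n)$: one embeds $\ind_{H}^{G}(\rho)$ as a submodule of the restriction to $G$ of a parabolically induced representation of a larger classical group, via the appropriate Bernstein-Zelevinsky filtration (for $\GL$ this is the original \cite{BZ1} filtration, for $\U$ it is the unitary analogue of the Moeglin-Waldspurger filtration used in the proof above), and then invokes noetherianity of the rings governing Bernstein blocks to pass finite generation to the submodule. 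The projectivity when $\rho$ is cuspidal then follows from the Chan-Savin Ext-vanishing argument invoked in the proof of Proposition \ref{proj}, since cuspidal induced representations remain projective after restriction, and the machinery depends only on the formal features (finite generation over a Bernstein center, noetherianity) that hold uniformly across the three families of classical groups.
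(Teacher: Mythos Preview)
Your proposal is correct and matches the paper's own treatment: the corollary is introduced in the text simply as ``a particular case of this proposition'' (Proposition~\ref{proj}), with no separate argument given, and your reduction via $\Bes(V,W)=\SO(W)$ when $\dim(V/W)=1$ is exactly that specialization.

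One small correction on your explanation of the parenthetical exclusion: the auxiliary construction $V^{+}=X+W+Y$ does not actually break down when $W$ is a hyperbolic plane --- one can still add a line $L$ to $V$ so that $V^{+}/W$ is hyperbolic. The genuine reason for the exclusion is the one you mention first and then set aside: when $\SO(W)\cong\Gm$, ``cuspidal'' means an arbitrary character, and characters of a split one-dimensional torus are \emph{not} projective objects (they have nontrivial self-extensions), so the projectivity input to the Chan--Savin argument fails at the source. This is a point about the representation theory of $\SO(W)$, not about the geometry of the embedding.
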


\section{What does the restriction really looks like!}

    So far, we have been discussing the question: which representations of $\GL_n(F)$ appear as a quotient of  an irreducible representation
    of $\GL_{n+1}(F)$. It is possible to have a more complete understanding of what a representation of $\GL_{n+1}(F)$ restricted to $\GL_n(F)$ looks like.

  Vanishing of Ext groups in many cases (but not in all cases!), suggests that
  the restriction to $\GL_n(F)$ of
  an irreducible admissible
  (generic) representation $\pi$ of $\GL_{n+1}(F)$   is close to being a  projective module without
  being one in all the cases.

  Since the category of smooth representations of $\GL_n(F)$ is  decomposed
  into blocks parameterized by the inertial equivalence classes of cuspidal datum $(M,\rho)$ in $\GL_n(F)$, one can
  ask if the projection of $\pi$ to the particular block, call it $\pi[M,\rho]$, is a projective module in that block.
  This appears 
  to be  an important question to understand: given an irreducible representation $\pi$ of $\GL_{n+1}(F)$, for which
  blocks $(M,\rho)$ in $\GL_n(F)$, is $\pi[M,\rho]$ a projective module?

    Before discussing the next proposition, recall that Bernstein-Zelevinski introduced the notion of derivatives of representations of the general linear group $\pi \rightarrow \pi^i$ taking a representation
    $\pi$ of  $\GL_{n+1}(F)$  to one of  $\GL_{n+1-i}(F)$ for all
$0 \leq i \leq (n+1)$, with $\pi^{n+1}$ nonzero if and only if $\pi$ has a Whittaker model. 
  The Bernstein-Zelevinski derivatives satisfy  the Leibnitz rule (in the Grothendieck group of representations
    of $\GL_{n+1}(F)$):
    \[ (\pi_1 \times \pi_2)^d = \sum_{i=0}^{d} \pi_1^{d-i} \times  \pi_2 ^i.\]
   Further, for an irreducible  cuspidal representation $\pi$ of $\GL_d(F)$, the only nonzero derivatives are $\pi^0=\pi$, and
    $\pi^d = \CC$. 

  Bernstein-Zelevinski introduced a 
  filtration on representations of $\GL_{n+1}(F)$ which describes
    the restriction of a representation $\pi$ of $\GL_{n+1}(F)$ to the mirabolic subgroup of $\GL_{n+1}(F)$
    in terms of the derivatives $\pi^i$ of $\pi$.
    The successive quotients of the filtration on $\pi$ (a representation of $\GL_{n+1}(F)$), as a representation of $\GL_n(F)$ are the parabolically induced representations
    $\nu ^{1/2} \pi^{i+1} \times \GG(n-i)$
    for $0 \leq i \leq n$,
    where $\GG(i)$ is the Gelfand-Graev representation of $\GL_i(F)$. 

    The proof of the following proposition is now clear from the  Bernstein-Zelevinski
  filtration on representations of $\GL_{n+1}(F)$ and the Leibnitz rule for derivatives:
    
    \begin{prop}\label{gln}
      Let $\pi$ be an irreducible generic representation of $\GL_{n+1}(F)$. Let $(M,\rho)$ be a cuspidal datum for $\GL_n(F)$, thus
      $M= \GL_{n_1}(F) \times \cdots \times \GL_{n_k}(F)$ with $n = n_1+\cdots + n_k$,
      is a Levi subgroup inside $\GL_n(F)$, and $\rho = \rho_1 \boxtimes \cdots \boxtimes \rho_k$ is a tensor product of irreducible
      cuspidal representations of $\GL_{n_i}(F)$. Assume that none of the cuspidal representations $\rho_i$ of $\GL_{n_i}(F)$
      appear in the cuspidal support of $\pi$ even after an unramified twist.
      Then $\pi|_{\GL_n(F)} [M,\rho]$
            is the
            $[M,\rho]$ component of the Gelfand-Graev representation $\GG(n)=\ind_N^{\GL_n(F)} \psi$;
            in particular, $\pi|_{\GL_n(F)} [M,\rho]$ is a projective representation of $\GL_n(F)$.
      \end{prop}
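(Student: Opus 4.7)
The plan is to apply the Bernstein--Zelevinsky filtration of $\pi$ restricted to the mirabolic $E_n \subset \GL_{n+1}(F)$, then restrict further along $\GL_n(F) \subset E_n$ to obtain a finite filtration of $\pi|_{\GL_n(F)}$ with successive quotients computable in terms of the derivatives $\pi^{(k)}$. Projecting this filtration to the Bernstein block $[M,\rho]$ (an exact functor), I hope to show that all but one successive quotient vanishes in the block, and that the surviving one is exactly the $[M,\rho]$-component of $\ind_N^{\GL_n(F)}\psi$. Projectivity will then follow immediately from the Chan--Savin projectivity of the Gelfand--Graev representation recalled in Proposition \ref{proj}.

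More concretely, the first step is to check that, as a $\GL_n(F)$-module, the $k$-th successive quotient of the BZ filtration is isomorphic to a parabolic induction of the form
\[
\mathcal{Q}_k \;=\; \ind_{P_k}^{\GL_n(F)}\bigl(\pi^{(k)} \boxtimes \mathcal{G}_{k-1}\bigr),
\]
where $P_k$ is a parabolic of $\GL_n(F)$ with Levi $\GL_{n+1-k}(F)\times \GL_{k-1}(F)$, and $\mathcal{G}_{k-1} = \ind_{N_{k-1}}^{\GL_{k-1}(F)}\psi$ is the Gelfand--Graev representation of the $\GL_{k-1}(F)$-factor. The edge cases $k=1$ (giving $\pi^{(1)}$) and $k=n+1$ (giving $\Wh(\pi)\otimes \mathcal{G}_n$) are reassuring checks. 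Unwinding the definitions of $\Phi^+$ and $\Psi^+$ and applying Mackey to pass from $E_n$ to $\GL_n(F)$ is the main technical piece, but it is standard.

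The second step is the cuspidal support analysis. Using the Leibniz rule together with the fact that a cuspidal representation $\sigma$ of $\GL_d(F)$ has only the derivatives $\sigma^{(0)}=\sigma$ and $\sigma^{(d)}=\CC$, one sees by induction on the length of a standard module that every irreducible subquotient of $\pi^{(k)}$ has cuspidal support contained, up to unramified twist, in the cuspidal support of $\pi$. The block decomposition of parabolic induction writes
\[
\mathcal{Q}_k[M,\rho] \;=\; \bigoplus \ind_{P_k}^{\GL_n(F)}\bigl(\pi^{(k)}[M_1,\rho^{(1)}] \boxtimes \mathcal{G}_{k-1}[M_2,\rho^{(2)}]\bigr),
\]
where the sum runs over splittings $(M,\rho)=(M_1,\rho^{(1)})\times(M_2,\rho^{(2)})$ with $M_1\subset \GL_{n+1-k}(F)$. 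Since by hypothesis none of the $\rho_i$ appears (up to unramified twist) in the cuspidal support of $\pi$, the factor $\pi^{(k)}[M_1,\rho^{(1)}]$ can be nonzero only when $\rho^{(1)}$ is empty, which forces $M_1$ trivial and hence $k=n+1$. Thus only $\mathcal{Q}_{n+1}$ survives, and for generic $\pi$ one has $\pi^{(n+1)}=\CC$, so $\mathcal{Q}_{n+1}=\ind_N^{\GL_n(F)}\psi$, giving the claimed identification together with projectivity.

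The main obstacle is the first step: making sure that the restriction from $E_n$ to $\GL_n(F)$ really converts the BZ pieces $(\Phi^+)^{k-1}\Psi^+\pi^{(k)}$ into the clean parabolic induction of $\pi^{(k)} \boxtimes \mathcal{G}_{k-1}$. Everything else (the block decomposition of parabolic induction, the Leibniz computation of cuspidal supports, and the projectivity of Gelfand--Graev) is bookkeeping or already recalled in the paper. Careful attention to unramified twists in the derivatives is the only genuinely subtle point, and the inertial nature of the hypothesis on $(M,\rho)$ neutralizes it cleanly.
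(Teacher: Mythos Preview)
Your proposal is correct and follows essentially the same approach as the paper, which offers only a one-sentence sketch (``a direct consequence of the Bernstein--Zelevinsky filtration\ldots'' together with the Leibniz rule and the computation of derivatives of cuspidals). You have simply unpacked that sketch: the identification of the $k$-th filtration piece restricted to $\GL_n(F)$ as $\ind_{P_k}^{\GL_n(F)}(\pi^{(k)}\boxtimes \mathcal{G}_{k-1})$, the cuspidal-support argument killing all pieces with $k\le n$ in the block $[M,\rho]$, and the appeal to Chan--Savin for projectivity of the surviving Gelfand--Graev piece are exactly the ingredients the paper has in mind.
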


    Here is the corresponding result for classical groups, asserted for simplicity of notation
    only for $\SO(W) \subset \SO(V)$ where
    $W \subset V$ is a codimension 1 nondegenerate subspace of a quadratic space $V$ with  $\dim(V)=n+1$. This
    result like Proposition \ref{gln} is also a
    consequence of a Bernstein-Zelevinski like filtration (due to Moeglin and Waldspurger in \cite{Mo-Wa})
    on the restriction of a representation of $\SO(V)$ to $\SO(W)$ when the representation of $\SO(V)$ is
    induced from a maximal parabolic
    with Levi of the form  $\GL_m(F) \times \SO(W')$ of a representation of the form $\mu_1 \boxtimes \mu_2$, and using the
    Bernstein-Zelevinski filtration for $\mu_1$ restricted to a mirabolic in $\GL_m(F)$. The proposition below uses
the representation $\ind_{\Bes(W_0)}^{\SO(W)} (\rho_0 \otimes \psi)$, for
    $\rho_0$ a cuspidal representation of $\SO(W_0)$,  which    we called  a Gelfand-Graev-Bessel representation
      in section \ref{bessel},  and which  is a projective representation by Proposition \ref{proj}.
Here, $\Bes(W_0)$ is the Bessel subgroup inside $\SO(W)$, introduced in section \ref{bessel},
    where $W_0\subset W$ is a
    nondegenerate subspace of a quadratic space $W$ with  $W_0^\perp$ an odd dimensional hyperbolic space,

        \begin{prop} \label{son}
      Let $\pi$ be an irreducible admissible  representation of $\SO(V)$ which is the full induction of a cuspidal representation
      of a Levi subgroup of $\SO(V)$. Let $(M,\rho)$ be a cuspidal datum for $\SO(W)$, thus, 
      $M= \GL_{n_1}(F) \times \cdots \times \GL_{n_k}(F) \times \SO(W_0)$ with $n = 2n_1+\cdots + 2n_k+ \dim(W_0)$,
      is a Levi subgroup inside $\SO(W)$, and $\rho = \rho_1 \boxtimes \cdots \boxtimes \rho_k \boxtimes \rho_0$
      is a tensor product of irreducible cuspidal
      representations $\rho_i$ of $\GL_{n_i}(F)$ for $i \geq 1$, and $\rho_0$ is an irreducible cuspidal representation of $\SO(W_0)$.
      Assume that none of the cuspidal representations $\rho_i$ of $\GL_{n_i}(F)$
      appear in the cuspidal support of $\pi$ even after an unramified twist (no condition on $\rho_0$).
      Then $\pi|_{\SO(W)} [M,\rho]$       is the
      $[M,\rho]$ component of the Gelfand-Graev-Bessel representation $\ind_{\Bes(W_0)}^{\SO(W)} (\rho_0 \otimes \psi)$; in particular,  $\pi|_{\SO(W)} [M,\rho]$  is a projective representation
      of $\SO(W)$ if $\SO(W_0)$ is not the split $\SO_2(F)$.
      \end{prop}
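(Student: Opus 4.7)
The plan is to imitate the proof of Proposition \ref{gln} verbatim, with the Moeglin--Waldspurger filtration replacing the Bernstein--Zelevinsky mirabolic filtration. By hypothesis,
\[
\pi = \ind_{P}^{\SO(V)}(\mu_1 \boxtimes \cdots \boxtimes \mu_s \boxtimes \mu_0),
\]
where $P \subset \SO(V)$ has Levi $\GL_{m_1}(F) \times \cdots \times \GL_{m_s}(F) \times \SO(V_0)$, each $\mu_i$ is cuspidal on $\GL_{m_i}(F)$, and $\mu_0$ is cuspidal on $\SO(V_0)$; these cuspidals form the cuspidal support of $\pi$.

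Iterating the Moeglin--Waldspurger filtration of $\pi|_{\SO(W)}$, applied once for each $\GL$-factor and one final time at the $\SO$-end for the Bessel descent of $\mu_0$, produces a finite filtration of $\pi|_{\SO(W)}$ whose successive quotients are parabolically or Bessel induced from data of the form $\mu_1^{(a_1)} \boxtimes \cdots \boxtimes \mu_s^{(a_s)} \boxtimes \mu_0^{(b)} \otimes \psi$, with $\mu_i^{(a_i)}$ a Bernstein--Zelevinsky derivative of $\mu_i$, $\mu_0^{(b)}$ a Bessel descent of $\mu_0$, and $\psi$ the relevant Bessel character. Cuspidality of each $\mu_i$ forces the only nonzero derivatives to be $\mu_i^{(0)} = \mu_i$ and $\mu_i^{(m_i)} = \CC$; cuspidality of $\mu_0$ forces the only surviving Bessel descent to be the trivial one, namely $\mu_0$ itself. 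So the filtration has at most $2^s$ nonzero successive quotients, indexed by subsets of $\{1,\ldots,s\}$.

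Next project onto the Bernstein block $[M,\rho]$ of $\SO(W)$. The hypothesis that no $\mu_i$ appears in the cuspidal support of $\rho$ even up to unramified twist forces every successive quotient in which some $\mu_i^{(0)} = \mu_i$ is chosen to lie in a different Bernstein block from $[M,\rho]$, and so to project to zero. The only surviving piece is the successive quotient where $\mu_i^{(m_i)} = \CC$ for every $i$; the Moeglin--Waldspurger combinatorics identify this surviving quotient as the Gelfand--Graev--Bessel module $\ind_{\Bes(W_0)}^{\SO(W)}(\mu_0 \otimes \psi)$, with $\mu_0$ matching $\rho_0$ in the block $[M,\rho]$. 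Projectivity of the result follows from Proposition \ref{proj}, since projection to any Bernstein block preserves projectivity, yielding
\[
\pi|_{\SO(W)}[M,\rho] \;=\; \ind_{\Bes(W_0)}^{\SO(W)}(\rho_0 \otimes \psi)[M,\rho].
\]

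The main obstacle is the combinatorial bookkeeping inside the iterated Moeglin--Waldspurger filtration: one must check that when every $\mu_i$ contributes only its top derivative $\CC$ and $\mu_0$ its trivial Bessel descent, the single surviving successive quotient is literally the full Gelfand--Graev--Bessel module attached to $(W_0,\rho_0)$ rather than a proper subquotient, and that the supporting parabolic degenerates in the expected way to $\Bes(W_0)$. A secondary subtlety is that a priori one obtains a filtration rather than a direct sum, but since only one successive quotient contributes after projection to $[M,\rho]$ the filtration collapses on this block and the identification is unambiguous.
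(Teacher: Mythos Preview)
Your approach is the paper's own: the paragraph preceding the proposition sketches exactly this argument (Moeglin--Waldspurger filtration, Bernstein--Zelevinsky derivatives of the $\GL$-part, cuspidality plus the block hypothesis kill all but one subquotient, Proposition~\ref{proj} gives projectivity), and the paper gives no further detail.

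There is, however, a genuine slip in your identification of the surviving piece. Writing $\pi = (\mu_1 \times \cdots \times \mu_s) \times \mu_0$ with $\mu_0$ cuspidal on $\SO(V_0)$, the Moeglin--Waldspurger filtration (see the display~(2) in the proof of Theorem~\ref{EP}) produces as the top-derivative subquotient the module $\ind_{\Bes(V_0)}^{\SO(W)}(\mu_0 \otimes \psi)$, with $V_0$ and $\mu_0$ coming from the inducing data of $\pi$. You write $\ind_{\Bes(W_0)}^{\SO(W)}(\mu_0 \otimes \psi)$, mixing the space $W_0$ from the block datum with the representation $\mu_0$ from $\pi$; this does not typecheck, since $\mu_0$ is a representation of $\SO(V_0)$, not of $\SO(W_0)$. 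Your phrase ``one final time at the $\SO$-end for the Bessel descent of $\mu_0$'' is also off: the filtration involves no descent of $\mu_0$; it sits unchanged in the surviving piece.

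What your argument actually delivers is $\pi|_{\SO(W)}[M,\rho] = \ind_{\Bes(V_0)}^{\SO(W)}(\mu_0 \otimes \psi)[M,\rho]$, projective by Proposition~\ref{proj}. The further claim of the proposition --- that this coincides with $\ind_{\Bes(W_0)}^{\SO(W)}(\rho_0 \otimes \psi)[M,\rho]$, hence depends only on the block and not on $\pi$ --- is a comparison of two \emph{different} Gelfand--Graev--Bessel modules, not a bookkeeping issue inside one filtration. Your ``main obstacle'' paragraph conflates these two points; the paper's sketch likewise leaves this identification implicit.
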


    \begin{remark}
      We assumed $\pi$ in Proposition \ref{gln} to be generic as otherwise the assertion in the Proposition \ref{gln} will become
      empty, i.e.,  $\pi|_{\GL_n(F)} [M,\rho]$ will be zero if $\pi$ is nongeneric. However, in Proposition \ref{son}
      we do not assume that $\pi$ is generic.
                  \end{remark}

  The following theorem is due to Chan and Savin,  cf. \cite{CS1}, \cite{CS2}, 
  \cite{CS3}. 
  \begin{thm} \label{CS}

    \begin{enumerate}
      
\item     The restriction of an irreducible admissible representation
    $\pi$ of $\GL_{n+1}(F)$ to $\GL_n(F)$ is a projective module in a
    particular Bernstein block of smooth representations of $\GL_n(F)$
    if and only if $\pi$ itself is generic and
    all irreducible $\GL_n(F)$-quotients of $\pi$,
in that particular Bernstein block of smooth representations of $\GL_n(F)$,
    are generic.

\item     If $\pi_1,\pi_2$ are any two irreducible representations
    of $\GL_{n+1}(F)$ whose restrictions to  $\GL_n(F)$ are projective in a
    particular Bernstein block of smooth representations of $\GL_n(F)$, then
    $\pi_1$ and $\pi_2$ are isomorphic in that particular Bernstein block of smooth representations of $\GL_n(F)$.

  \item Let $\pi$ be a smooth projective module in the Iwahori block of $\GL_n(F)$ such that
    $\dim \Hom_{\GL_n(F)}[\pi,\pi']\leq 1$ for any irreducible representation $\pi'$ of $\GL_n(F)$.
    Then $\pi$ is isomorphic to either $\ind_{\GL_n(O_F)}^{\GL_n(F)}(\St)$ or $\ind_{\GL_n(O_F)}^{\GL_n(F)}(\CC)$, 
  where $\St$ denotes the Steinberg representation of $\GL_n(\FF_q)$ where $\FF_q$ is the residue
  field of $O_F$.

    \end{enumerate}
    
    \end{thm}

  \begin{remark}
    After this theorem of Chan and Savin, the unfinished tasks are:
    \begin{enumerate}
    \item Given an irreducible generic representation $\pi$ of $\GL_{n+1}(F)$, can we classify exactly the
      Bernstein blocks of $\GL_n(F)$ in which $\pi|_{\GL_n(F)}$ is not projective?
    \item  More generally, if $\pi$  is an irreducible representation of $\GL_{n+1}(F)$ which may or may not be generic,
      can one understand projective dimension (i.e., the minimal length of a projective resolution) of
      $\pi|_{\GL_n(F)}$ in a particular Bernstein block?
            \end{enumerate}

    As is often the case in representation theory  of $p$-adic groups,
    dealing with discrete series which are non-cuspidal is often the most difficult part.  In Proposition
    \ref{restriction} in the next section, 
    we prove that for $\pi$ a generic  representation of $\GL_{n+1}(F)$,  $\pi|_{\GL_n(F)}$ is a  projective
    representation in  those Bernstein blocks of $\GL_n(F)$ which contain no   non-cuspidal discrete series representations.
    Both  Proposition
    \ref{restriction} and  Proposition
    \ref{gln} can be considered as the simplest blocks where there is a nice answer.
  \end{remark}

 The following theorem of Chan, cf. \cite{Chan}, gives a complete classification of the irreducible
  representations of $\GL_{n+1}(F)$ which when restricted to $\GL_n(F)$ are projective modules, thus remain projective
  in {\it all} blocks.

\begin{thm} \label{thmchan}
    Let $\pi$ be an irreducible representation of $\GL_{n+1}(F)$. Then $\pi$ restricted to $\GL_n(F)$ is a projective representation
    if and only if
    \begin{enumerate}
    \item Either $\pi$ is essentially square integrable, or,
    \item $(n+1) =2d$, $\pi = \pi_1 \times \pi_2$ where $\pi_i$ are cuspidal on $\GL_{d}(F)$.
    \end{enumerate}
    \end{thm}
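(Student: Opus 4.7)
The starting point is the Chan–Savin criterion of Theorem \ref{CS}(1): for irreducible admissible $\pi$ of $\GL_{n+1}(F)$, $\pi|_{\GL_n(F)}$ is projective in a given Bernstein block iff $\pi$ is generic and every irreducible $\GL_n(F)$-quotient of $\pi$ lying in that block is generic. Since projectivity as a smooth $\GL_n(F)$-module is equivalent to projectivity in every Bernstein block separately, the plan is to classify the irreducible generic $\pi$ for which \emph{every} irreducible $\GL_n(F)$-quotient of $\pi$ is generic. The Bernstein–Zelevinsky filtration of $\pi|_{E_n}$ identifies those quotients with irreducible quotients of parabolically induced representations built from the derivatives $\pi^i$, so the criterion becomes a condition on the derivatives of $\pi$.

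For the ``if'' direction in case (1), I would use that an essentially square integrable $\pi$ is indexed by a single Zelevinsky segment, so its nonzero derivatives are themselves essentially square integrable on smaller general linear groups (up to twist), and the induced pieces of the BZ filtration therefore admit only generic irreducible quotients. In case (2), I would apply the Leibnitz rule
\[(\pi_1\times\pi_2)^i = \sum_{j+k=i}\pi_1^{j}\times\pi_2^{k},\]
together with the fact that for cuspidal $\pi_i$ on $\GL_d(F)$ the only nonzero derivatives are $\pi_i^0=\pi_i$ and $\pi_i^d=\CC$. This forces the nonzero derivatives of $\pi=\pi_1\times\pi_2$ to be only $\pi$ itself, $\pi_1\oplus\pi_2$ on $\GL_d(F)$, and $\CC$ at the top; each is cuspidal or trivial, so the induced pieces in the BZ filtration have only generic irreducible quotients.

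For the ``only if'' direction, suppose $\pi$ is irreducible but lies outside cases (1) and (2). Using the Zelevinsky classification, write $\pi$ as (the generic quotient of) a standard module $\pi_1\times\cdots\times\pi_k$ with essentially square integrable factors $\pi_i$. Excluding (1) and (2) forces either $k\geq 3$, or $k=2$ with a non-cuspidal factor, or $k=2$ with cuspidal factors of unequal rank. In each scenario, Leibnitz-rule bookkeeping on the derivatives of $\pi$ produces a summand which is the parabolic induction of a pair of \emph{linked} Zelevinsky segments; such an induced module carries a non-generic (Zelevinsky-type) irreducible quotient, and by Bernstein–Zelevinsky this lifts to a non-generic irreducible $\GL_n(F)$-quotient of $\pi$ in the associated Bernstein block, contradicting Chan–Savin.

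The main obstacle is this last step: producing a non-generic summand in some derivative for every $\pi$ outside (1)–(2). The cases $k\geq 3$ and $k=2$ with a non-cuspidal factor are tractable by differentiating the non-cuspidal factor to shorten its segment so that it links with another. The subcase $k=2$ with cuspidal factors of unequal rank is the most delicate, since the Leibnitz expansion of $\pi^i$ is short and one must carefully track cuspidal supports to guarantee that the required linkage, and hence a non-generic irreducible quotient in some Bernstein block of $\GL_n(F)$, actually appears.
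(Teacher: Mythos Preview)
The paper does not give a proof of this statement; it is attributed to Chan \cite{Chan} and quoted without argument. So there is no in-paper proof to compare against, and what follows addresses your sketch on its own terms.

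There is a genuine gap in the ``only if'' direction, precisely in the case you flag as most delicate. Take $\pi=\pi_1\times\pi_2$ with $\pi_i$ cuspidal on $\GL_{d_i}(F)$ and $d_1<d_2$. By the Leibnitz rule the nonzero derivatives of $\pi$ are $\pi$ itself, $\pi_1$, $\pi_2$, and $\CC$, each irreducible cuspidal or one-dimensional. No linked pair of segments occurs in any $\pi^{(i)}$, so your mechanism of ``producing a summand which is the parabolic induction of a pair of linked Zelevinsky segments'' simply does not fire. Yet the theorem asserts such $\pi$ is \emph{not} projective; a non-generic $\GL_n(F)$-quotient exists, but it is not witnessed by non-genericity inside the derivatives. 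Concretely, for $n+1=3$ and $\pi=\chi\times\rho$ with $\rho$ supercuspidal on $\GL_2(F)$, a suitable character of $\GL_2(F)$ is a quotient of $\pi|_{\GL_2(F)}$ (cf.\ Remark~\ref{rem1}), even though $\pi^{(1)}=\rho$, $\pi^{(2)}=\chi$, $\pi^{(3)}=\CC$ are all cuspidal or trivial.

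This same example undermines your ``if'' reasoning as stated. The blanket inference ``$\pi^{(i)}$ is cuspidal (or essentially square integrable), hence the $i$-th Bernstein--Zelevinsky piece has only generic irreducible quotients'' is exactly what fails above, so it cannot be invoked without further input. The $i$-th BZ piece of $\pi|_{\GL_n(F)}$ is not merely $\pi^{(i)}$ parabolically induced: it carries an extra Gelfand--Graev factor on $\GL_{i-1}(F)$, and when $i-1$ is at least the size of the cuspidal $\pi^{(i)}$, that factor can combine with $\pi^{(i)}$ to produce a non-generic quotient on $\GL_n(F)$. What singles out the equal-rank case $d_1=d_2=d$ is that the Gelfand--Graev factor then lives on $\GL_{d-1}(F)$, too small to supply a segment linked to a cuspidal on $\GL_d(F)$; establishing the analogous size constraint across all derivatives of an arbitrary essentially square integrable $\pi$ is the real content of Chan's argument in \cite{Chan}, and is not captured by derivative bookkeeping alone.
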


\begin{remark} \label{rem3} Observe that if an irreducible representation $\pi$ of $\GL_{n+1}(F)$ is non-generic or
  has a non-generic
  quotient representation $\pi'$ of $\GL_n(F)$, then as $\EP[\pi,\pi']=0$ by Theorem \ref{whittaker},
$\Ext^i_{\GL_n(F)}[\pi,\pi'] \not =0$ for some $i>0$, and
  $\pi$ cannot
  be a projective module when restricted to  $\GL_{n}(F)$. Now,
            the non-tempered GGP, conjectured in  \cite{GGP2} and proved for $\GL_n(F)$ in \cite{Chan2}, \cite{Gur},  describes irreducible representations  $\pi$ of $\GL_{n+1}(F)$ and  $\pi'$ of $\GL_n(F)$ (which are Speh representations on discrete series representations, i.e., have $A$-parameters) with 
      \[\Hom_{\GL_n(F)}[\pi,\pi'] \not = 0.\]

      This takes care of eliminating most representations ($\pi$ of $\GL_{n+1}(F)$ either non-generic or
    generic but  of Arthur type)
      not in the  list in Theorem \ref{thmchan} as non-projective except 
       for the tempered representation $\pi= \St_d
      \times \chi \St_d$
      of $\GL_{2d}(F)$ where $\chi$ is a unitary character of $F^\times$, and $\St_d$ is the Steinberg representation
      of $\GL_d(F)$ which by the
non-tempered GGP
      has no non-generic quotient of $\GL_{2d-1}(F)$ with an $A$-parameter (but it does have other
      non-generic quotients of $\GL_{2d-1}(F)$, cf. Theorem \ref{CS}(1)). On the other hand,   \[\Ext^i_{\GL_{2d-1}(F)}[\St_d \times \chi \St_d ,\mathbbold{1}_{d+1} \times \pi'] \not = 0,\]
            for $i=d,d-1$, where $\pi'$ is any tempered representation of $\GL_{d-2}(F)$,
see Theorem 7.4 of  \cite{Chan2}, and a recent work of Saad, cf. \cite{Sa}. Thus, one can re-interpret Theorem \ref{thmchan} to say that  an irreducible representation $\pi$ of $\GL_{n+1}(F)$ is not projective
      when restricted to $\GL_n(F)$ if and only if for some $i>0$,  \[\Ext^i_{\GL_n(F)}[\pi ,\pi'] \not = 0,\]
      for some irreducible non-generic representation $\pi'$ of $\GL_n(F)$ with $A$-parameter (a statement which is not yet quite proven
      as we do not know if any irreducible representation of $\GL_{n+1}(F)$ of dimension $>1$ has a quotient representation
      of $\GL_n(F)$ of Arthur type which we expect is the case).
    \end{remark}

  \begin{remark}
    From Theorem \ref{thmchan}  of Chan, it follows that just like cuspidal representations, discrete series representations of $\GL_{n+1}(F)$
    are always  projective representations when restricted to $\GL_n(F)$. This seems like a general feature of  all
    the GGP pairs. Thus for a GGP pair $(G,H)$, if $\pi$ is a discrete series representation
    of $G$, one expects $\pi|_H$ to be a projective representation of $H$; there
    is no proof yet of this expectation.
    \end{remark}
\section{A theorem of Roche and some consequences}

In the last section we discussed some situations where restriction of irreducible admissible
representations of $\GL_{n+1}(F)$ (resp., other classical groups) to $\GL_n(F)$ (resp., subgroups of other
classical groups) give rise to projective modules and which are for $\GL_{n+1}(F)$, by theorems of Chan and Savin,
very explicit compactly induced representations. In this section, we use a theorem of Alan Roche to one more such situation for both
$\GL_{n+1}(F)$ as well as for classical groups  where the restriction gives rise to projective modules. In this case,
however, the projective modules are {\it universal principal series} representations.

\begin{thm} \label{Roche} (Alan Roche) Let $G$ be a reductive $p$-adic group, $(M,\rho)$, a cuspidal datum.
  Assume
  that no nontrivial element of $N_G(M)/M$ preserves $\rho$ up to an unramified twist. Then
  the induced representation,
  \[ \Ind_P^G(\rho),\]
  is irreducible. Furthermore, the parabolic induction from $P$ (with Levi $M$) to $G$ gives an equivalence of categories
  \[ {        \RR}
    [M]{[\rho]} \rightarrow {\RR}[G]{[M, \rho]}.\]
  In particular, since the category of representations  ${\RR}[M]{[\rho]}$ in the Bernstein component of $M$ corresponding to the cuspidal representation $\rho$ of $M$ is the same as the category of modules over an Azumaya algebra with center 
  the ring of functions on the complex torus consisting of the unramified twists of $\rho$, the same is true
  of  the Bernstein component ${\RR}[G]{[M, \rho]}$ of $G$. 
\end{thm}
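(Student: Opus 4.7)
The plan is to exploit the Bernstein--Zelevinsky geometric lemma together with the regularity hypothesis on $\rho$ to show that parabolic induction essentially inverts the Jacquet functor on the relevant Bernstein blocks, and then derive all three assertions from this.

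First, I would record the basic functoriality. Both $\Ind_P^G$ and the normalized Jacquet functor $r_M^G$ are exact; Frobenius reciprocity makes $r_M^G$ a left adjoint of $\Ind_P^G$, and Bernstein's second adjointness provides a companion adjunction via the opposite parabolic $\overline{P}$. By cuspidal support they restrict to functors between $\RR[M][\rho]$ and $\RR[G][M,\rho]$. The geometric lemma computes $r_M^G(\Ind_P^G \sigma)$ as a module carrying a filtration whose graded pieces are indexed by double cosets $W_M \backslash W_G / W_M$: the trivial coset contributes $\sigma$ itself, and each other coset $w$ contributes a module parabolically induced from a twisted Jacquet module of $\sigma$ to some subgroup $M^w \subsetneq M$ unless $w$ comes from $N_G(M)/M$. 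For $\sigma$ in the block $\RR[M][\rho]$ all such proper Jacquet modules vanish on cuspidal-support grounds, while for $w \in N_G(M)/M$ nontrivial the resulting piece $w\sigma$ has cuspidal support $[M, w\rho]$, which under the regularity hypothesis lies outside $[\rho]$. Projecting back to the block therefore gives a natural isomorphism $r_M^G \circ \Ind_P^G \cong \mathrm{id}$ on $\RR[M][\rho]$.

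This isomorphism, combined with the adjunction, yields full faithfulness of $\Ind_P^G$ on $\RR[M][\rho]$, via $\Hom_G(\Ind_P^G \sigma, \Ind_P^G \sigma') \cong \Hom_M(\sigma, r_M^G \Ind_P^G \sigma') \cong \Hom_M(\sigma, \sigma')$. For essential surjectivity, I would show that the counit $\Ind_P^G r_M^G \pi \to \pi$ is an isomorphism for every $\pi \in \RR[G][M,\rho]$: the triangle identities force its kernel and cokernel to be annihilated by $r_M^G$, and since $r_M^G$ is faithful on $\RR[G][M,\rho]$ (by the Bernstein decomposition, every nonzero object there has a nonzero Jacquet module whose $[\rho]$-component is nonzero), both must vanish. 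With the equivalence of categories in hand, irreducibility of $\Ind_P^G(\rho)$ is automatic because it corresponds to the irreducible object $\rho \in \RR[M][\rho]$.

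The Azumaya algebra description of $\RR[G][M,\rho]$ is then just transported across the equivalence from the standard description of $\RR[M][\rho]$: for cuspidal $\rho$, that block is modules over an Azumaya algebra whose center is the coordinate ring of the torus of unramified twists of $\rho$ quotiented by the stabilizer of $\rho$ in $N_G(M)/M$, which is trivial under the regularity hypothesis. The main obstacle is really the essential surjectivity step: the geometric lemma delivers the $r_M^G \circ \Ind_P^G \cong \mathrm{id}$ direction cheaply, but proving faithfulness of $r_M^G$ on the whole block requires the deeper structural results of Bernstein on the cuspidal decomposition of the category of smooth representations.
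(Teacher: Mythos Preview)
The paper does not supply a proof of this theorem; it is stated with attribution to Alan Roche (reference \cite{Roche}) and then used as a black box in the proof of Proposition~\ref{restriction}. There is therefore no proof in the paper to compare your proposal against.

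For what it is worth, your outline is essentially the standard route to Roche's result and is broadly sound. One small point of hygiene: you slide between the two adjunctions. Frobenius reciprocity makes $r_P$ the \emph{left} adjoint of $\Ind_P^G$, so the counit of that adjunction is $r_P\,\Ind_P^G\sigma \to \sigma$, not $\Ind_P^G r_M^G\pi \to \pi$. The map you invoke for essential surjectivity, and the identity $\Hom_G(\Ind_P^G\sigma,\Ind_P^G\sigma')\cong \Hom_M(\sigma, r_M^G\Ind_P^G\sigma')$ you use for full faithfulness, both belong to Bernstein's \emph{second} adjunction, in which the relevant Jacquet functor is $r_{\bar P}$ for the opposite parabolic. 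This is harmless once noted, since the geometric lemma handles $r_{\bar P}\Ind_P^G$ just as well, but the argument should be run consistently with one adjunction. Your closing caveat is accurate: the real content lies in the conservativity of the (block-projected) Jacquet functor on all of $\RR[G][M,\rho]$, not just on finite-length objects, and that does rest on Bernstein's structure theory.
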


\begin{remark} \label{6.2}
  By the Geometric Lemma (which calculates Jacquet modules of full principal series representations, in this particular case when $\rho$ is cuspidal), the assertion
  that no nontrivial element of $N_G(M)/M$ preserves $\rho$ up to an unramified twist is equivalent to say that
  the Jacquet module with respect to the parabolic $P$ of the principal series representation
  $\Ind_P^G(\rho)$ contains $\rho$ with multiplicity 1, and
  no unramified twist of it distinct from itself.
  \end{remark}

\begin{prop} \label{restriction} Let $\G_{n+1}$ be any  of the classical groups $\GL_{n+1}(F),$  $\SO_{n+1}(F),$
  $\U_{n+1}(F)$.    Let $\pi_1$ be an irreducible representation of $\G_{n+1}(F)$ belonging to a generic $L$-packet of $\G_{n+1}(F)$,
  and let $(M,\rho)$  be a cuspidal
  datum for $\G_n(F)$. Assume that  no nontrivial element of $N_{\G_n(F)}(M)/M$ preserves $\rho$ up to an unramified twist.
  Let $\rho^0$ be an irreducible representation of $M^0$, the subgroup of $M$ generated by compact
  elements of $M$, with $\rho^0 \subset \rho|_{M^0}$.
  Then,
  the $(M,\rho)$ Bernstein component of $\pi_1$ restricted to $\G_n(F)$ is the
  ``universal principal series''
  representation, i.e., 
  \[ \pi_1|_{\G_n(F)} [M,\rho] \cong \ind_{P^0}^{\G_n}(\rho^0) \cong \Ind_{P}^{\G_n}\ind_{M^0}^M(\rho^0) ,\]
  where $P^0=M^0N$. In particular,  the $(M,\rho)$ Bernstein component of $\pi_1|_{\G_n(F)}$ is a projective representation which
  is independent of $\pi_1$.
\end{prop}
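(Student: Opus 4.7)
The plan is to transport the calculation to the Levi $M$ via Roche's equivalence of categories and identify both sides of the asserted isomorphism with the same projective generator there. The regularity hypothesis on $(M,\rho)$ is exactly the hypothesis of Theorem \ref{Roche}, so normalized parabolic induction yields an equivalence
\[
\Ind_P^{\G_n}\colon {\RR}[M]{[\rho]}\xrightarrow{\sim}{\RR}[\G_n]{[M,\rho]},
\]
and by that theorem the target block is modules over the regular functions on the complex torus of unramified twists of $\rho$. Inside ${\RR}[M]{[\rho]}$, the compactly induced module $\ind_{M^0}^M\rho^0$ is a rank-one free module over this function ring, hence a projective generator of the block; by induction in stages it transports under Roche's equivalence to $\Ind_P^{\G_n}\ind_{M^0}^M\rho^0\cong\ind_{P^0}^{\G_n}(\rho^0)$, which both identifies the right-hand side of the proposition as the standard projective generator of ${\RR}[\G_n]{[M,\rho]}$ and makes its projectivity manifest.

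It then remains to show that $\pi_1|_{\G_n(F)}[M,\rho]$ corresponds under Roche's equivalence to this same projective generator. By Theorem \ref{Roche} every irreducible object of ${\RR}[\G_n]{[M,\rho]}$ is of the form $\Ind_P^{\G_n}\rho'$ for $\rho'$ an unramified twist of $\rho$; cuspidals on $\GL$-factors are generic and full parabolic inductions of generic representations are generic, so every irreducible quotient of $\pi_1|_{\G_n(F)}$ in this block is generic. Combined with $\pi_1$ belonging to a generic $L$-packet, Theorem \ref{CS}(1) forces $\pi_1|_{\G_n(F)}[M,\rho]$ to be projective, and Theorem \ref{CS}(2) forces it to be independent of $\pi_1$ up to isomorphism in the block. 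It therefore suffices to verify the isomorphism for a single convenient $\pi_1$, namely one whose cuspidal support avoids each $\rho_i$ up to unramified twist. For such $\pi_1$, Propositions \ref{gln} and \ref{son} express $\pi_1|_{\G_n(F)}[M,\rho]$ as the $[M,\rho]$-component of the Gelfand-Graev or Gelfand-Graev-Bessel representation; Rodier-style heredity for Whittaker and Bessel models under the parabolic Jacquet functor, together with the one-dimensionality of the Whittaker space of each $\rho_i$ (and of the Bessel model of $\rho_0$ in the classical case), then identifies this component on the Levi side with $\ind_{M^0}^M\rho^0$, completing the chain of isomorphisms.

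I expect the main obstacle to be transferring the Chan--Savin projectivity criterion, stated in Theorem \ref{CS} for $\GL_n(F)$, to the orthogonal and unitary settings. For those groups one would need either an analogous criterion or a direct verification of projectivity of $\pi_1|_{\G_n(F)}[M,\rho]$—for instance, via the Moeglin--Waldspurger filtration combined with the Gelfand-Graev-Bessel projective module of Proposition \ref{proj}—before the block-wise uniqueness argument can be used to reduce to a convenient $\pi_1$ and invoke Proposition \ref{son}.
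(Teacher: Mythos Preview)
Your proposal has a genuine gap that you already identify: Theorem~\ref{CS} is stated and proved only for $\GL_n(F)$, and you have no substitute for parts (1) and (2) in the orthogonal and unitary cases. What you may not realise is that the paper's proof sidesteps this obstacle entirely by never invoking Chan--Savin.

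The paper argues directly on the commutative-algebra side of Roche's equivalence. By Theorem~\ref{Roche} (plus a multiplicity-one observation for $\rho|_{M^0}$), the block ${\RR}[\G_n]{[M,\rho]}$ is equivalent to modules over the Laurent polynomial ring $R=\CC[X_1^{\pm 1},\dots,X_d^{\pm 1}]$, and irreducibles are exactly the full principal series $\Ind_P^{\G_n}\rho'$ with $\rho'$ an unramified twist of $\rho$. These are all generic, so by Theorem~\ref{duke93} for $\GL$ and by the proved GGP conjectures for $\SO,\U$, each occurs as a quotient of $\pi_1$ with constant multiplicity one (or identically zero). Writing $\mathcal{M}$ for the $R$-module corresponding to $\pi_1|_{\G_n}[M,\rho]$, this says $\mathcal{M}/\mathfrak{m}\mathcal{M}\cong\CC$ for every maximal ideal $\mathfrak{m}$. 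Finite generation of $\mathcal{M}$ comes from Aizenbud--Sayag (Theorem~\ref{AS}). The missing ingredient is then a purely commutative-algebraic lemma (Lemma~\ref{commutative} in the paper): a finitely generated module over a reduced finite-type $\CC$-algebra whose fibre at every closed point is one-dimensional is projective of rank one. Over a Laurent polynomial ring such a module is free, and the same analysis applied to $\ind_{P^0}^{\G_n}(\rho^0)$ gives the identification.

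So the key idea you are missing is that the Roche hypothesis makes the block \emph{commutative}, and in that regime projectivity and uniqueness follow from an elementary fibre-dimension argument, with the only representation-theoretic input being multiplicity one from GGP. Your route through Chan--Savin, Propositions~\ref{gln}/\ref{son}, and Rodier heredity is considerably more circuitous even for $\GL$, and for the classical groups it leaves exactly the hole you describe.
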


    \begin{proof}
      Since $M$ is a Levi subgroup of $\G_n(F)$, $M$ is a product of the groups $\GL_{n_i}(F)$ with 
      $\G_m(F)$ for some $m \geq 0$ (which are semisimple unless $\G_m$ is one of $\U_1(F)$, or $\SO_2(F)$), 
      it is easy to see that any irreducible representation of $M$ when restricted to $M^0$,
      is a finite direct sum of irreducible representations of $M^0$ with multiplicity 1. By the second adjointness combined with the Frobenius reciprocity for open subgroups, cf. Lemma \ref{Frob},
      for $\pi$ any irreducible representation of $\G_n(F)$,
      \[ \Hom_{\G_n(F)}[ \Ind_{P}^{\G_n(F)} \ind_{M^0}^M(\rho^0), \pi] \cong \Hom_M [\ind_{M^0}^M(\rho^0), \pi_{\bar{N}}  ] \cong
      \Hom_{M^0} [\rho^0, \pi_{\bar{N}}].\] 

      By Remark \ref{6.2},  $\pi_{\bar{N}}$ as a module of $M$ contains $\rho$ as a sub-quotient with multiplicity one, and does not contain any unramified twist of $\rho$ which is unequal to $\rho$. It easily follows
      from this that  $\dim \Hom_{M^0} [\rho^0, \pi_{\bar{N}}] \leq 1.$ 
      Therefore any irreducible representation $\pi$ of $\G_n(F)$ which appears as a quotient  of $\Ind_{P^0}^{\G_n(F)}(\rho^0)$ appears  with multiplicity at most one, and appears with multiplicity one if and only if it belongs to the Bernstein block $[M,\rho]$, and is
      a full principal series. Further, because of this multiplicity 1, the Azumaya algebra appearing in Theorem \ref{Roche}, is the ring $R$ of Laurent polynomials
      $R=\CC[X_1,X_1^{-1}, \cdots , X_d, X_d^{-1}]$
      which is the ring of regular functions on a $d$-dimensional complex torus.

      Now we analyse $\pi_1|_{\G_n(F)}[M,\rho]$ considered as a module, call it ${\mathcal M}$  over the ring,
      $R=\CC[X_1,X_1^{-1}, \cdots , X_d, X_d^{-1}]$. In the category of
      $R$-modules, irreducible = simple modules are of the form $R/{\mathfrak m}$ where ${\mathfrak m}$ are maximal
      ideals in $R$, and therefore irreducible quotients of $\pi_1|_{\G_n(F)}[M,\rho]$ are homomorphism of $R$-modules
      ${\mathcal M} \rightarrow R/{\mathfrak m} =\CC$, equivalently, homomorphism of $R$-modules
      ${\mathcal M}/{\mathfrak m}{\mathcal M}  \rightarrow \CC$. 
      As every irreducible representation in this Bernstein block is a full principal series,
      in particular they are all generic, therefore by Theorem \ref{gln} in the case of $\GL_{n+1}(F)$ and by GGP conjectures (theorems!) in other cases, each have
      these irreducible principal series representations arise as a quotient of $\pi_1$ with multiplicity 1 (multiplicity identically zero is a possibility too for $\SO_{n+1}(F),\U_{n+1}(F)$; the important thing is that the multiplicity
      is constant among all irreducible representations in this block -- which are all full principal series
      representations induced from a cuspidal representation of a Levi subgroup, a case of the GGP conjectures which is much easier than the general case).
This analysis can then be summarized to say that for the module ${\mathcal M}$  over  $R=\CC[X_1,X_1^{-1}, \cdots , X_d, X_d^{-1}]$ corresponding to $\pi_1|_{\G_n(F)}[M,\rho]$,  ${\mathcal M}/{\mathfrak m}{\mathcal M}  \cong \CC$ for all maximal ideals ${\mathfrak m}$ in $R$.

By
Theorem \ref{AS} of Aizenbud-Sayag we know the  finite generation of ${\mathcal M}$  over  $R=\CC[X_1,X_1^{-1}, \cdots , X_d, X_d^{-1}]$. Thus all the assumptions in the Lemma \ref{commutative} below are satisfied, and hence
  ${\mathcal M}$ is a  projective module of rank 1 over
      the ring $R$ of Laurent polynomials
      $R=\CC[X_1,X_1^{-1}, \cdots , X_n, X_n^{-1}]$. This is also the case for the universal principal series
  representation $\ind_{P^0}^{\G_n}(\rho^0)$. Since
  any rank 1 projective module over a Laurent polynomial ring is free,
  this concludes  the proof of the proposition.
\end{proof}

    \begin{lemma} \label{commutative}  Let $R$ be a finitely generated commutative $k$-algebra where $k$ is a field.
      Suppose that $R$ has no nonzero nilpotent
      elements. Let ${\mathcal M}$ be a finitely generated module over $R$ such that for each maximal ideal
      ${\mathfrak m}$ of $R$, ${\mathcal M}/{\mathfrak m}{\mathcal M}$ is free of rank 1 over $R/{\mathfrak m}$, then ${\mathcal M}$ is a
      projective module of rank 1
  over $R$. \end{lemma}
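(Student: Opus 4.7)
My plan is to prove that $\mathcal{M}$ is locally free of rank $1$ on a Zariski open cover of $\operatorname{Spec}(R)$; since $R$ is Noetherian (being a finitely generated $k$-algebra), a finitely generated module is locally free of rank $1$ if and only if it is projective of rank $1$, so this is exactly what I need.

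First, I fix a maximal ideal $\mathfrak{m}$ of $R$. Since $\mathcal{M}/\mathfrak{m}\mathcal{M}$ is one-dimensional over $R/\mathfrak{m}$, pick $m\in\mathcal{M}$ whose image is a basis. Applying Nakayama's lemma to the finitely generated $R_{\mathfrak{m}}$-module $\mathcal{M}_{\mathfrak{m}}/R_{\mathfrak{m}} m$ (which equals its own $\mathfrak{m}$-multiple, because $\mathcal{M}=Rm+\mathfrak{m}\mathcal{M}$) shows that $m$ already generates $\mathcal{M}_{\mathfrak{m}}$. Using finite generation of $\mathcal{M}$ over $R$, I spread this out to obtain $f\notin\mathfrak{m}$ such that $m$ generates $\mathcal{M}_f$ as an $R_f$-module. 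Let $J_f\subseteq R_f$ denote the kernel of the resulting surjection $R_f\twoheadrightarrow\mathcal{M}_f$; the whole task reduces to showing $J_f=0$.

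To prove $J_f=0$, I will argue that $J_f$ lies inside every maximal ideal of $R_f$. Any such maximal ideal is of the form $\mathfrak{m}'R_f$ for a maximal ideal $\mathfrak{m}'$ of $R$ not containing $f$ (this uses that $R_f$, being again a finitely generated $k$-algebra, is itself Jacobson). Reducing the surjection modulo $\mathfrak{m}'$ yields an $R/\mathfrak{m}'$-linear surjection $R/\mathfrak{m}'\twoheadrightarrow\mathcal{M}/\mathfrak{m}'\mathcal{M}$, which by hypothesis lands in a one-dimensional target and is therefore an isomorphism. Hence $J_f\subseteq\mathfrak{m}'R_f$ for every maximal ideal of $R_f$, so $J_f$ lies in the Jacobson radical of $R_f$. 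But $R_f$ is a finitely generated $k$-algebra (hence Jacobson) and is reduced (since localizations preserve reducedness), so its Jacobson radical equals its nilradical, which is zero. This forces $J_f=0$ and so $\mathcal{M}_f\cong R_f$.

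As $\mathfrak{m}$ ranges over all maximal ideals, the distinguished opens $D(f)$ so produced cover the maximal spectrum of $R$, hence all of $\operatorname{Spec}(R)$ by the Jacobson property, and on each $D(f)$ the module $\mathcal{M}$ is free of rank $1$, completing the proof. The step on which everything hinges, and which I expect to be the only delicate point, is the vanishing of $J_f$: the no-nilpotents hypothesis is doing essential work there. For instance, over the non-reduced local ring $k[\epsilon]/(\epsilon^2)$, the cyclic module $\mathcal{M}=R/(\epsilon)$ has one-dimensional fibre at the unique maximal ideal yet is not projective, so the reduced hypothesis cannot be dropped. The rest of the argument---Nakayama's lemma, the preservation of finite generation and of the Jacobson and reduced properties under localization by a single element---is standard commutative algebra.
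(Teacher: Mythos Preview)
Your argument is correct. The paper does not supply a proof of this lemma; it is stated as a standard fact from commutative algebra and then invoked in the proof of Proposition~\ref{restriction}. Your proof is precisely the standard one: Nakayama to get a local generator, spreading out to a basic open $D(f)$, and then using that a reduced finitely generated $k$-algebra is Jacobson to kill the kernel $J_f$. The counterexample over $k[\epsilon]/(\epsilon^2)$ you mention is exactly the right sanity check for why the reduced hypothesis is needed. One minor remark: in your final paragraph, invoking the Jacobson property to pass from a cover of the maximal spectrum to a cover of $\operatorname{Spec}(R)$ is harmless but slightly more than necessary---any nonempty closed subset of $\operatorname{Spec}(R)$ contains a maximal ideal, so an open set containing all maximal ideals is automatically all of $\operatorname{Spec}(R)$.
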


    \begin{remark}
      Proposition \ref{restriction} applies, in particular, to
      Bernstein blocks of $\GL_n(F)$ which contain a
      cuspidal representation of $\GL_n(F)$!
      \end{remark}

\section{Euler-Poincar\'e characteristic for classical groups}

    In the next few sections we will discuss Euler-Poincar\'e characteristic for branching laws for
    classical groups, restricting ourselves to the case of $G=\SO_{n+1}(F)$ and $H=\SO_n(F)$ since  the following variant
    of Theorem 15.1 of \cite{GGP} turns any question on Hom and Ext spaces involving
    the Bessel model to one involving  $(G,H)=(\SO_{n+1}(F), \SO_n(F))$.

    \begin{prop} \label {basic-case}
      Let $W\subset V \subset V^+ $ be non-degenerate quadratic spaces with $V/W$ an odd dimensional
      split quadratic space of dimension $(2d-1)$, and $V^+/W$ split quadratic space of dimension $2d$.
      Write $V^+= X + W + Y$ with $X,Y$ isotropic subspaces of $V^+$ of dimension $d$ with $X+Y$ nondegenerate,
      and perpendicular to $W$. Let $\pi_1$ be any finite length representation of $\SO(V)$, and $\pi_2$ of $\SO(W)$.
      Then there exists a choice
      of a unitary cuspidal representation $\tau$ of $\GL(X)$ such that for the principal series representation
      $\tau \times \pi_2$ of $\SO(V^+)$, we have:
      \[ \Ext^i_{\SO(V)}[\tau \times \pi_2, \pi_1^\vee] \cong \Ext^i_{\Bes(V,W)}[\pi_1,\pi_2^\vee] {\rm ~~for ~~all~~ i \geq 0} .\]
      \end{prop}

In the following theorem, so as to simplify notation, if $\lambda_1$ is a representation of $\SO(V_1)$
and $\lambda_2$ is a representation of $\SO(V_2)$, then by $\EP_{\Bes}[\lambda_1, \lambda_2]$ we will give it the
usual meaning (discussed in section 4) if $V_2 \subset V_1$ with $V_1/V_2$ odd dimensional split quadratic space, whereas 
$\EP_{\Bes}[\lambda_1, \lambda_2]$
will stand for
$\EP_{\Bes}[\lambda_2, \lambda_1]$ if $V_1 \subset V_2$ with $V_2/V_1$ odd dimensional split quadratic space.
The notation  $\EP_{\Bes}[\lambda_1, \lambda_2]$ will presume that we are in one of the two cases. This notation
has the utility of being able to add hyperbolic spaces of arbitrary dimension to $V_1$ or $V_2$, and by Theorem
15.1 of \cite{GGP},
\[ \EP_{\Bes}[\lambda_1, \lambda_2] = \EP_{\Bes}[\tau_1 \times \lambda_1, \tau_2 \times \lambda_2],\]
where $\tau_1,\tau_2$ are any cuspidal representations on general linear groups of arbitrary dimensions. (This result will
be extended in Theorem \ref{EP} below for all representations  $\tau_1,\tau_2$ of finite length
on general linear groups.)

The following theorem is the analogue of Theorem \ref{whittaker}
which was for representations of the groups $\GL_{n+1}(F),\GL_n(F)$, now for classical groups, but as in the rest of the paper, we assert it only for special orthogonal groups.

\begin{thm} \label{EP} Let $V$ be a quadratic space over $F$, $V'$ a nondegenerate subspace of
  codimension 1 inside $V$.
  Let $\sigma = \pi_0 \times \sigma_0,$
  be a representation for $\SO(V)$
  where $\pi_0$ is a finite length  representation of $\GL_{n_0}(F)$, and
  $\sigma_0$ is a finite length  representation of $\SO(V_0)$ where $V_0\subset V$
  is a quadratic subspace of $V$ such that the quadratic space $V/V_0$ is a
  hyperbolic space of dimension $2n_0$. Similarly, let 
  $\sigma' = \pi'_0 \times \sigma'_0$
be a representation for $\SO(V')$, then:
    \[\EP_{\SO(V')}[\sigma,\sigma'] = \dim \Wh(\pi_0) \cdot \dim \Wh(\pi_0') \cdot \dim \EP_{\Bes}[\sigma_0, \sigma'_0].\]
    \end{thm}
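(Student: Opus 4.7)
The proof follows the template of Theorem \ref{whittaker}, with the Bernstein--Zelevinsky filtration on $\GL_{n+1}(F)\supset\GL_n(F)$ replaced by the Moeglin--Waldspurger filtration \cite{Mo-Wa} for $\SO(V)\supset\SO(V')$ applied to a representation parabolically induced from a maximal parabolic. The goal is to reduce the claim to Lemma \ref{vanishing} (which says $\EP$ over $\GL_d(F)$ vanishes for $d>0$) together with the definition of $\EP_{\Bes}$.

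First I would apply the Moeglin--Waldspurger filtration to $\sigma|_{\SO(V')}=(\pi_0\times\sigma_0)|_{\SO(V')}$. The associated graded has layers indexed by $j=0,1,\dots,n_0$; the $j$-th layer is built from the Bernstein--Zelevinsky derivative $\pi_0^j$ (a finite length representation of $\GL_{n_0-j}(F)$) parabolically combined with $\sigma_0$, and the top layer ($j=n_0$) is $\dim\Wh(\pi_0)=\dim\pi_0^{n_0}$ copies of the Gelfand--Graev--Bessel representation $\ind_{\Bes(V_0)}^{\SO(V')}(\sigma_0\otimes\psi)$. Additivity of $\EP$ on exact sequences together with Frobenius reciprocity for $\Ext$ turns each $j<n_0$ layer into an $\EP$-computation factoring through $\GL_{n_0-j}(F)$ of positive rank, which therefore vanishes by Lemma \ref{vanishing}. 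Only the open-orbit layer $j=n_0$ survives, giving
\[\EP_{\SO(V')}[\sigma,\sigma']\;=\;\dim\Wh(\pi_0)\cdot\EP_{\SO(V')}\!\bigl[\ind_{\Bes(V_0)}^{\SO(V')}(\sigma_0\otimes\psi),\,\sigma'\bigr].\]

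Frobenius reciprocity rewrites the remaining EP as an EP of $\sigma_0\otimes\psi$ against $\sigma'|_{\Bes(V_0)}$ over the Bessel subgroup $\Bes(V_0)\subset\SO(V')$. A second application of the Moeglin--Waldspurger filtration, now to $\sigma'=\pi_0'\times\sigma_0'$ in the companion direction, peels off the factor $\dim\Wh(\pi_0')$ by the same vanishing-of-intermediate-derivatives argument and leaves precisely $\EP_{\Bes}[\sigma_0,\sigma_0']$ as defined in the preamble to the theorem. In the special case where $\pi_0$ and $\pi_0'$ are cuspidal the formula collapses to the stability $\EP_{\Bes}[\lambda_1,\lambda_2]=\EP_{\Bes}[\tau_1\times\lambda_1,\tau_2\times\lambda_2]$ of Theorem 15.1 of \cite{GGP}, so the assertion is its promotion to arbitrary finite-length inducing data on the $\GL$-factors.

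The main obstacle is the explicit description of each layer of the Moeglin--Waldspurger filtration: one needs the Levi, unipotent piece, and character on each layer pinned down precisely enough that, after Frobenius reciprocity, the intermediate layers become $\Ext$-groups over $\GL_{n_0-j}(F)$-factors of the predicted positive rank, so that Lemma \ref{vanishing} can be applied uniformly. A secondary subtlety is that after the first reduction the outer representation is a Gelfand--Graev--Bessel representation rather than a full principal series, so the second invocation of the filtration requires either an extension of the Moeglin--Waldspurger analysis to this setting or a preliminary symmetrization via Theorem 15.1 of \cite{GGP}.
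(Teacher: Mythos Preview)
Your proposal is correct and matches the paper's argument essentially step for step: Moeglin--Waldspurger filtration on $(\pi_0\times\sigma_0)|_{\SO(V')}$, additivity of $\EP$, second adjointness together with K\"unneth to split off a $\GL$-factor on each layer, Lemma~\ref{vanishing} to kill every layer except the one coming from the top derivative $\pi_0^{n_0}$, and then the same maneuver applied to $\sigma'=\pi_0'\times\sigma_0'$. One small refinement worth noting, since you flagged the layer description as the main obstacle: in the paper the bottom layer ($j=0$) is not of the same Bessel-induced shape as the others but is rather $\Ind_{P'}^{\SO(V')}\bigl(\pi_0\boxtimes\sigma_0|_{\SO(V'_0)}\bigr)$ with $V'_0=V_0\cap V'$; it still dies by the same $\GL_{n_0}$-mechanism, so your outline goes through unchanged.
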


\begin{proof}
  The proof of this theorem is analogous to the proof of Theorem \ref{whittaker} for representations $\pi_1,\pi_2$ of $\GL_{n+1}(F),\GL_n(F)$,
  replacing the Bernstein-Zelevinsky exact sequence describing  the restriction of the representation $\pi_1$
  to the mirabolic subgroup in $\GL_{n+1}(F)$, by a similar exact sequence  describing  the restriction of the representation $\sigma$ of $\SO(V)$
  to the subgroup $\SO(V')$ due to Moeglin-Waldspurger in \cite{Mo-Wa}. An essential part of the proof of Theorem \ref{whittaker} was Lemma \ref{vanishing}
  about the vanishing of $\EP[V,W]$ when $V,W$ are finite length representations of $\GL_m(F)$, $m \geq 1$. This continues
  to be the case here. We give some details of the proof here.

  According to \cite{Mo-Wa}, the restriction of  the representation
  $\sigma = \pi_0 \times \sigma_0$ of $\SO(V)$ to $\SO(V')$ has a filtration
  with one sub-quotient equal to
  \[ \Ind_{P'}^{\SO(V')} ( \nu^{1/2} \pi_0 \times \sigma_0|_{\SO(V'_{0})}), \tag{1} \]
  where $V'_{0} =  V_0 \cap V'$ is a codimension one subspace of $V_0$, and $P'$ the parabolic in $\SO(V')$
  with Levi  $\GL_{n_0}(F) \times \SO(V'_{0}) $. (If there is no such parabolic in $\SO(V')$, then this term will not be there.)

  The other subquotients of $\sigma|_{\SO(V')}$ are the principal series representations of $\SO(V')$ induced from
  maximal parabolics with Levi whose $\GL$ part is $\GL_{n_0-i}(F)$
 and the $\SO$ part is $\SO(V_i')$
  with $V' = V'_{n_0} \supset V'_{i+1} \supset V'_i \supset V_0$
  with $V_i'/V_0$ split quadratic space of dimension $(2i-1)$, in particular, $\dim V' = \dim V_0 +2n_0-1$. These
  subquotients are:

  \[ \pi_0^i \times \ind_{\Bes(V_0)}^{\SO(V_i')} (\sigma_0 \otimes \psi)
  , \,\,\,\,\,\, n_0\geq i \geq 1, \tag{2} \]
  where $\pi_0^i$ is a representation of $\GL_{n_0-i}(F)$ which is the $i$-th derivative of the
  representation $\pi_0$ of $\GL_{n_0}(F)$.

  Given the filtration on $\sigma|_{\SO(V')}$ with successive quotients as in (1) and (2),
  and as $\EP[\sigma, \sigma']$ is additive in exact sequences,
  one applies the Ext version of the 2nd adjointness theorem of Bernstein,
  cf. Theorem 2.1 of \cite{Pr3}, to find that 
  $\EP[\sigma, \sigma']$ is a sum of terms corresponding to the above filtration on $\sigma$ restricted to
  $\SO(V')$ which,  for the term in equation (1) above,  is:
  \[ \EP_{\GL_{n_0}(F) \times \SO(V'_{0})}[ \nu^{1/2}\pi_0 \times \sigma_0|_{\SO(V'_{0})} , \sigma'_{N'^{-}}]  \tag{3}\]
  where $N'^{-}$ is the unipotent radical of the parabolic opposite to $P'$,
  and the subscript to $\sigma'$ denotes
  the corresponding Jacquet module.

  The contribution of the terms in equation (2) to  $\EP[\sigma, \sigma']$ is:
  \[ \EP_{\GL_{n_0-i}(F) \times \SO(V'_{i})}[ \pi_0^i \times \ind_{\Bes(V_0)}^{\SO(V_i')} (\sigma_0 \otimes \psi), \sigma'_{N_i'^{-}}] \tag{4}, \]
  where $N'^{-}_i$ is the unipotent radical of the parabolic opposite to $P'_i$.

  As $\sigma'$ is a finite length representation of $\SO(V')$, its Jacquet modules  $\sigma'_{N'^{-}}$, and  $\sigma'_{N_i'^{-}}$
  are finite length representations of the corresponding Levi subgroups of $\SO(V')$ which are of the form $\GL_{n_0}(F) \times \SO(V'_{0})$
  and $\GL_{n_0-i}(F) \times \SO(V'_{i})$ respectively, and therefore in the Grothendieck group of representations of the
  corresponding Levi subgroups,
  these Jacquet modules are sum of tensor
  products of irreducible representations of the two factors, one of the general linear group and one of the special orthogonal group. We are thus in the case (1)  of Theorem \ref{kunneth} (where $G_1$, in the notation of that theorem, is a general linear group, and the representations  $E_1,F_1$ of $G_1$ are of finite length).

  By  Lemma \ref{vanishing}
  about vanishing of $\EP[V,W]$ when $V,W$ are finite length representations of $\GL_m(F)$, $m \geq 1$, and
  Corollary \ref{kunneth-cor} about Euler-Poincar\'e pairing for product of groups, it follows that the only term which gives a nonzero contribution to    $\EP[\sigma, \sigma']$
  is the one from the filtration of $\sigma$ restricted to $\SO(V')$ coming from (2) and corresponding to  
  the highest derivative $\pi_0^{n_0}$ of $\pi_0$ (note that $V'_{n_0}=V'$) giving rise to the representation
  of $\GL(0) = \{1\}$ of dimension $\dim \Wh(\pi_0)$ which needs to be multiplied by
  \[\EP [\ind_{\Bes(V_0)}^{\SO(V')} (\sigma_0 \otimes \psi), \sigma'] = \EP_{\Bes(V_0)}[\sigma'^\vee, \sigma_0^\vee], \tag{5}\]
  which is the term in (4) corresponding to $i=n_0$, in which case the corresponding
  Jacquet module $\sigma'_{N_i'^{-}}$ is simply $\sigma'$. (The equality in (5) is just the Frobenius reciprocity.)

  Thus, we have proved that
  \[\EP_{\SO(V')}[\sigma,\sigma'] = \dim \Wh(\pi_0) \cdot \dim \EP_{\Bes(V_0)}[\sigma'^\vee,\sigma_0^\vee]. \tag{6}\]
  
  One can carry out the above analysis once more, using now the representation   $\sigma' = \pi'_0 \times \sigma'_0,$ and  we get,
    \[ \EP_{\Bes(V_0)}[\sigma'^\vee,\sigma_0^\vee] = \dim \Wh(\pi'_0) \cdot \dim \EP_{\Bes(V'_0)}[\sigma_0,\sigma'_0]. \tag{7}\]

  By (6) and (7), we find:
  \[\EP_{\SO(V')}[\sigma,\sigma'] = \dim \Wh(\pi_0) \cdot \dim \Wh(\pi'_0) \cdot \dim
  \EP_{\Bes}[\sigma_0,\sigma'_0], \tag{8}\]
  completing the proof of Theorem \ref{EP}.
  \end{proof}

\begin{cor} \label{constancy}
  Let,
  \[\sigma = \pi_1\times \cdots \times  \pi_t \times \sigma_0,\]
    \[\sigma' = \pi'_1 \times \cdots \pi'_{t'} \times \sigma'_0,\]
    be representations of $\SO(V),\SO(V')$, and
    \[\sigma_b = \pi_1|\cdot|_F^{b_1}\times \cdots \times  \pi_t|\cdot|_F^{b_t} \times \sigma_0,\]
    \[\sigma'_{b'} = \pi'_1|\cdot|_F^{b'_1}\times \cdots \pi'_{t'}|\cdot|_F^{b'_{t'}} \times \sigma'_0,\]
  be families of representations passing through $\sigma,\sigma'$.  Then $\EP_{\SO(V')}[\sigma_b,\sigma'_{b'}]$ is independent of $b,b'$.
  \end{cor}
\begin{proof} By Theorem \ref{EP}, it suffices to note that  $\dim \Wh(\pi_b)$ and
  $ \dim \Wh(\pi'_{b'})$ are constant which follows since dimension of the space of
  Whittaker models of a parabolically  induced representation is the same as that of the inducing representation. 
  \end{proof}
\section{Local constancy of the Euler-Poincar\'e characteristic}
In corollary \ref{constancy} we proved local constancy of the Euler-Poincar\'e characteristic. In this section,
we offer a different and a more direct proof of a part of the local constancy which does
not use the detailed analysis via the Bernstein-Zelevinski like filtration. There is a proof of a much more general
case due to Aizenbud-Sayag in \cite{AS}.

\begin{prop} Let $(G,H)$ be a pair of $p$-adic groups, with $H$ reductive, and
  $\pi_1,\pi_2$ smooth representations of $G, H$ respectively.  Assume that
  the restriction of the representation $\pi_1$ of $G$ to $H$ is locally finitely generated (this is the case by results of
  \cite{AS} for the GGP pairs). Suppose that $\pi_2$ is a finite length representation of $H$ which is part of a family $\pi_{2, \lambda}$
  of representations of $H$, i.e., $\pi_2$ is a principal series representation of $H$ of the form
  $\pi_2 = \Ind_P^H(V_2)$ where $P$ is a parabolic in $H$ with Levi decomposition $P=MN$, and $V_2$ is a finite length
  representation of $M$, and  $\pi_{2, \lambda} = \Ind_P^H(V_2 \otimes \lambda)$ where $\lambda$ is an unramified character of $M$. Then,
  \[ \EP_H[\pi_1,\pi_{2, \lambda}],\]
    is independent of $\lambda$.
\end{prop}

\begin{proof}
  Let $\pi_1[\pi_2]$ be the projection of $\
  pi_1$ in the sum of those finitely many
  blocks of $H$, a reductive $p$-adic group,  in which $\pi_2$ has a nontrivial component. 
  As $\pi_1$ is a locally finitely generated representation of $H$, 
  $\pi_2$ has finite length,  $\pi_1[\pi_2]$ is a finitely generated $H$-module, and therefore
  has a finite resolution by finitely generated
  $H$-projective modules. This is a generality which uses:

  \begin{enumerate}
  \item The category of smooth representations of $H$ is noetherian, i.e., submodule of a finitely generated module is
    finitely generated.
  \item The category of smooth representations of $H$ has finite homological dimension (equal to the split rank of $H$ assumed to be $n$),
    i.e., every smooth representation of $H$ has a finite projective resolution of length $n$
    (by in general infinitely generated modules).
  \end{enumerate}

  By these two generalities, it follows that  $\pi_1[\pi_2]$ has a finite resolution by finitely generated
  projective modules $P_i$:
\[ 0\rightarrow Q_n \rightarrow P_{n-1} \cdots \rightarrow P_1 \rightarrow P_0 \rightarrow \pi_1[\pi_2] \rightarrow 0,\]   
where by the usual dimension shift argument, $Q_n=P_n$ is a projective module (where $n$ is the cohomological
dimension of the category of smooth representations of $H$).

Now, $\Ext^i_H[\pi_1,\pi_{2, \lambda}] =\Ext^i_H[\pi_1[\pi_2] ,\pi_{2, \lambda}]$
is the cohomology of the  complex consisting of
$\Hom_H[P_i,\pi_{2, \lambda}]$. We claim that $\Hom_H[P_i,\pi_{2, \lambda}]$ are finite dimensional
complex vector spaces whose dimension is independent of $\lambda$.

Since $P_i$ are finitely generated, by the noetherian property of the category of smooth representations
of $H$, $P_i$ have a resolution of the form:
\[ \ind_{K'}^H(V_i') \rightarrow \ind_K^H(V_i)  \rightarrow P_i \rightarrow 0,\]   
where $K,K'$ are compact open subgroups of $H$, and $V_i,V'_i$ are finite dimensional
smooth representations of $K,K'$. Therefore, applying $\Hom_G[-, \pi_{2,\lambda}]$ to the above exact sequence, we get an exact
sequence:
\[0 \rightarrow  \Hom_G[P_i, \pi_{2,\lambda}] \rightarrow \Hom_G[\ind_{K}^H(V_i), \pi_{2,\lambda}]  \rightarrow
\Hom_G[\ind_{K'}^H(V'_i), \pi_{2,\lambda}]  \]
By Frobenius reciprocity, Lemma \ref{Frob},
this is same as
\[ 0 \rightarrow \Hom_G[P_i, \pi_{2,\lambda}] \rightarrow \Hom_K[V_i, \pi_{2,\lambda}]  \rightarrow
\Hom_{K'}[V'_i, \pi_{2,\lambda}] . \]

Now note that
since $\pi_{2, \lambda}$ is a principal series representation of $H$ with $\lambda$ an unramified character
of the Levi subgroup used in the induction, $\pi_{2, \lambda}|_K$ is independent of $\lambda$,
 hence $\Hom_K[V_i ,\pi_{2, \lambda}]$ are independent of $\lambda$, and by
 admissibility of $\pi_{2, \lambda}$,  $\Hom_K[V_i ,\pi_{2, \lambda}]$ are finite dimensional $\CC$-vector space.
 Similarly for the finite dimensional representations $V_i'$ of $K'$.
Thus, $\Hom_K[V_i, \pi_{2,\lambda}]$ and   
$\Hom_{K'}[V'_i, \pi_{2,\lambda}]$ are   complex vector spaces whose dimension is independent of $\lambda$,
  and the map too between them is independent of $\lambda$, therefore the dimension
  of the kernel, i.e.,  $\dim \Hom_G[P_i, \pi_{2,\lambda}]$ is independent of $\lambda$.

The proof of the proposition is completed by the well-known assertion that the Euler-Poincar\'e characteristic
of a finite complex of finite dimensional vector spaces is the alternating sum of their dimensions
which is independent of $\lambda$.
\end{proof}

\section{Euler-Poincar\'e characteristic  for the group case: Kazhdan orthogonality}

The branching laws considered in this paper are for $H \hookrightarrow G \times H$,
where $H \subset G$, eventually interpreted as the $(G \times H)$ spherical variety
$\Delta(H) \backslash (G \times H)$,
in which we try to understand $\Ext^i_H(\pi_1,\pi_2)$
for an irreducible representation
$\pi_1 \boxtimes \pi_2$ of $G \times H$. A special case of this branching problem  is for  the ``group case''
where $H=G$, so in the case of
$G \hookrightarrow G \times G$ where we will be considering  $\Ext^i_G(\pi_1,\pi_2)$ where $\pi_1, \pi_2$
are representations of the same group $G$. This could be considered as a precursor of the more general branching
for $H \hookrightarrow G \times H$, and has played an important  role in the subject.

Explicit calculation of $\Ext_G^i(\pi_1,\pi_2)$ has been carried out in several cases for $G$, a reductive $p$-adic group, and $\pi_1,\pi_2$
irreducible representations of $G$. In particular, if both $\pi_1$ and $\pi_2$ are tempered representations of $G$,
there are general results in \cite{Op-Sol} using the formulation of $R$-groups, and there are some independent
specific calculations in \cite{Ad-Dp}.
Ext groups for certain non-tempered representations are considered in \cite{Or} as well as in \cite{Dat}.

In the archimedean case, $H^i({\mathfrak g}, K, \pi) = \Ext^i(\CC, \pi)$ has been much studied, but not $\Ext^i(\pi_1,\pi_2)$ as far as I know.

The following theorem was conjectured by Kazhdan and was proved by  Schneider-Stuhler, cf. \cite{Sch-Stu},  and by Bezrukavnikov. It is  known only in characteristic zero.
\begin{theorem}
Let $\pi$ and $\pi'$ be finite-length, smooth
representations of a
reductive
$p$-adic
group $G$.
Then \[\EP_G[\pi,\pi'] = 
\int_{C_{ellip}} \Theta(c)\bar{\Theta}'(c)\, dc,\] 
where $\Theta$ and $\Theta'$ are the characters of $\pi$ and $\pi'$, 
and $dc$ is a natural measure on the set
$C_{ellip}$ of regular elliptic conjugacy classes in $G$, and is given by
\[ dc =  {W(G(F), T(F))}^{-1}  \cdot \| \det (1- Ad(\gamma))_{{\frak g}/{\frak g}_\gamma} \| dt,\]
where $dt$ is the normalized Haar measure on the elliptic torus $T= G_\gamma$ giving it measure 1.
\end{theorem}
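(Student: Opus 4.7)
The plan is to follow the Schneider--Stuhler strategy via the Bruhat--Tits building of $G$. Let $\mathcal{B}$ denote the Bruhat--Tits building of $G$ and, for each facet $\sigma$, let $P_\sigma^\dagger$ be the stabilizer (parahoric extended by normalizer) and $U_\sigma$ its pro-unipotent radical. First I would invoke the Schneider--Stuhler resolution: for any finite-length smooth $\pi$, there is a finite projective resolution
\[ 0 \to C_d(\mathcal{B}, \pi) \to \cdots \to C_0(\mathcal{B}, \pi) \to \pi \to 0, \]
where $C_i(\mathcal{B}, \pi) = \bigoplus_\sigma \ind_{P_\sigma^\dagger}^G (\pi^{U_\sigma})$, the sum being over $G$-orbits of $i$-facets, and $d$ is the $F$-split rank of $G$. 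Each term is a compactly induced representation from an open compact-mod-center subgroup, hence projective in the smooth category. This reduces the problem to a finite alternating sum of finite-dimensional $\Hom$-spaces.

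Next, applying $\Hom_G(-, \pi')$ and using Frobenius reciprocity, I would write
\[ \EP_G[\pi, \pi'] = \sum_{[\sigma]} (-1)^{\dim \sigma}\, \dim \Hom_{P_\sigma^\dagger}\bigl(\pi^{U_\sigma}, \pi'\bigr). \]
Encoding this as a distributional pairing, define the Euler--Poincar\'e function
\[ f_{\EP} = \sum_{[\sigma]} (-1)^{\dim \sigma}\, e_{P_\sigma^\dagger}, \]
where $e_{P_\sigma^\dagger}$ is (a suitable normalization of) the idempotent $\mathbf{1}_{P_\sigma^\dagger}/\mathrm{vol}(P_\sigma^\dagger)$ convolved against a matrix coefficient. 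Then one shows
\[ \EP_G[\pi, \pi'] = \int_G \Theta_\pi(g)\,\overline{\Theta_{\pi'}}(g)\, f_{\EP}(g)\, dg \]
once $f_{\EP}$ is packaged as a bi-invariant function whose orbital integrals compute the Euler characteristic.

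The third (and crux) step is the geometric vanishing: I would prove that the orbital integral $O_\gamma(f_{\EP})$ vanishes on every non-elliptic regular semisimple $\gamma$, and equals $|\det(1-\Ad(\gamma))_{\mathfrak{g}/\mathfrak{g}_\gamma}|^{-1}$ on the elliptic regular set. The vanishing comes from a Lefschetz/contractibility argument on the building: a non-elliptic $\gamma$ fixes a contractible apartment-like subcomplex (the fixed-point set of a non-compact torus containing $\gamma$ retracts onto the building of the Levi of a proper parabolic containing $\gamma$), so the alternating sum of fixed facets equals the Euler characteristic of a contractible space, which cancels against normalization. For elliptic $\gamma$, the fixed point set in $\mathcal{B}$ is the (finite) building of the anisotropic group $G_\gamma$, whose Euler--Poincar\'e characteristic gives the normalization factor $|W(G,T)|^{-1}\cdot |\det(1-\Ad(\gamma))|$ via a transfer to the Lie algebra.

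Finally, combining the two ingredients through the Weyl integration formula
\[ \int_G \Theta_\pi\,\overline{\Theta_{\pi'}}\, f_{\EP}\, dg = \sum_{T}\frac{1}{|W(G,T)|} \int_{T^{\mathrm{reg}}} \Theta_\pi(t)\,\overline{\Theta_{\pi'}(t)}\,|\det(1-\Ad(t))_{\mathfrak{g}/\mathfrak{t}}|\, O_t(f_{\EP})\, dt, \]
and restricting via the vanishing result to elliptic tori, yields exactly the integral over $C_{\mathrm{ellip}}$ with the stated measure $dc$. The main obstacle, and the heart of the proof, is the geometric vanishing of $O_\gamma(f_{\EP})$ on the non-elliptic locus; every other step is essentially homological bookkeeping (the finite-length/finite-cohomological-dimension finiteness is already supplied by the building resolution) or a standard application of the Weyl integration formula.
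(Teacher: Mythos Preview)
The paper does not prove this theorem: it is quoted as a known result, attributed to Schneider--Stuhler and to Bezrukavnikov, with the remark that it is only known in characteristic zero. There is therefore no argument in the paper against which to compare your proposal.

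Your outline is a recognisable sketch of the Schneider--Stuhler strategy and is broadly sound. Two points deserve tightening. First, the Schneider--Stuhler resolution uses the Moy--Prasad filtration subgroups $U_\sigma^{(e)}$ at a level $e$ adapted to $\pi$, not simply the pro-unipotent radical; this is harmless for the shape of the argument but affects the precise form of the terms. Second, and more substantively, the step in which you write $\EP_G[\pi,\pi'] = \int_G \Theta_\pi\,\overline{\Theta_{\pi'}}\,f_{\EP}\,dg$ is not well-posed as written: characters are only locally $L^1$ functions on $G$, and the product of two such against a test function need not be integrable near the non-regular set. What one actually does is compute each $\dim\Hom_{P_\sigma^\dagger}(\pi^{U_\sigma^{(e)}},\pi')$ by character orthogonality on the compact-mod-center group $P_\sigma^\dagger$ (both sides being finite-dimensional there), and then reorganise the resulting alternating sum of integrals over the various $P_\sigma^\dagger$ via the fixed-point combinatorics on the building. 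Equivalently, one builds from the resolution a pseudo-coefficient $f_\pi$ for $\pi$ and pairs $\Theta_{\pi'}$ against $f_\pi$ alone; the orbital-integral vanishing you describe then applies to $f_\pi$. With that reformulation, your identification of the vanishing of $O_\gamma(f_{\EP})$ on the non-elliptic locus as the heart of the matter is correct.
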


\section{An integral formula of Waldspurger}

In this section we review an integral formula of Waldspurger, cf. \cite{Wa}, \cite{Wa1},
which we then propose to be the 
integral formula for the Euler-Poincar\'e pairing for 
\[\EP_{\Bes(V,W)}[\sigma,\sigma' ]\] 
for $\sigma$ 
any finite length
representation of $ \SO(V)$, and $\sigma'$ any finite length representation of $\SO(W)$, 
where $V$ and $W$ are quadratic spaces over $F$ with 
\[V = X + D + W + Y\]
with $W$ a quadratic subspace of $V$ of codimension $2k+1$ 
with $X$ and $Y$ totally isotropic subspaces of $V$ of dimension $k$, in duality with each other under the underlying bilinear form,  
and $D$ an anisotropic line in $V$. Let $Z=X+Y$.

Let $\underline{\mathcal T}$ denote the set of elliptic tori $T$ in $\SO(W)$ such that there exist quadratic subspaces  $W_T,W'_T$ of $W$ such that:  
\begin{enumerate}

\item $W= W_T \oplus W'_T$, and $V=W_T \oplus W'_T \oplus D \oplus Z$.

\item $\dim (W_T)$ is even, and $\SO(W'_T)$ and 
$\SO(W'_T \oplus D \oplus Z)$ are  quasi-split. 

\item $T$ is a maximal (elliptic) torus in $\SO(W_T)$.

\end{enumerate}

Let
${\mathcal T}$ denote a set of orbits for the action of $\SO(W)$  on $\underline{\mathcal T}$. 
For our purposes we note the
most important elliptic torus $T= \langle e \rangle$ 
corresponding to $W_T= 0$.

For $\sigma$ an admissible representation of $\SO(V)$ of finite length, define a function 
$c_\sigma(t)$ for regular elements of a  torus $T$ belonging to $\underline{\mathcal T}$ by the germ
expansion of the character $\theta_\sigma(t)$ of $\sigma$ on the centralizer of $t$ in the Lie algebra 
of $\SO(V)$, and picking out `the' leading term.

Similarly, for $\sigma'$ an admissible representation of $\SO(W)$ of finite length, 
one defines a function 
$c_{\sigma'}(t)$ for regular elements of a  torus $T$ belonging to $\underline{\mathcal T}$ by the germ
expansion of the character $\theta_{\sigma'}(t)$ of $\sigma'$.

Define a function $\Delta_T$ on an elliptic torus $T$ belonging to
$\underline{\mathcal T}$ with $W= W_T \oplus W'_T$, by 
\[\Delta(t) = |\det(1-t)|_{W_T}|,\] and let  $D^H$ denote the function 
on $H(F) = \SO(W)$ defined by:
\[ D^H(t) = |\det(Ad(t) -1)_{h(F)/h_t(F)}|_F,\]
where $h(F)$ is the Lie algebra of $H$ and $h_t(F)$ is the Lie algebra of the centralizer of $t$ in $H$.

For a torus $T$ in $H$, define the Weyl group $W(H,T)$ by the usual normalizer divided by the centralizer:
\[W(H,T) = N_{H(F)}(T)/Z_{H(F)}(T).\]

  The following theorem of Waldspurger could be considered as the analogue
  of Kazhdan orthogonality for the group case encountered earlier.

\begin{thm} \label{Wal}
Let $V = X + D + W + Y$ be a quadratic space over the non-archimedean local field $F$ with $W$ a quadratic subspace of codimension $2k+1$ as above. 
Then for any  irreducible admissible representation $\sigma$ of $\SO(V)$ 
and irreducible admissible representation $\sigma'$ of $\SO(W)$,  
\[c(\sigma,\sigma'): =\sum _{T \in {\mathcal T}} |W(H,T)|^{-1} \int_{T(F)} c_\sigma(t) c_{\sigma'}(t) D^H(t) \Delta^k(t) dt ,\] 
is a finite sum of absolutely convergent integrals. (The Haar measure on $T(F)$ is normalized to have volume 1.) If either $\sigma$ is a supercuspidal representation of 
$\SO(V)$, 
and $\sigma'$ is arbitrary irreducible admissible representation of $\SO(W)$,
or both $\sigma$ and $\sigma'$ are tempered representations, then 
$$c(\sigma,\sigma') = \dim \Hom_{\Bes(V,W)}[\sigma,\sigma' ].$$
\end{thm}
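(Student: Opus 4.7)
The plan is to recognize both sides of the asserted identity as two ways of computing a single Bessel pairing of matrix coefficients twisted by $\psi$. For vectors $v \in \sigma$, $\tilde v \in \tilde\sigma$, $v' \in \sigma'$, $\tilde v' \in \tilde\sigma'$, I would study
\[J(v,\tilde v; v',\tilde v') = \int_{\Bes(V,W) \backslash \SO(V)} \langle \sigma(g)v, \tilde v \rangle \, \overline{\phi_{v',\tilde v'}(g)} \, dg,\]
where $\phi_{v',\tilde v'}$ packages the matrix coefficient of $\sigma'$ on $\SO(W)$ with the character $\psi$ on the unipotent radical of $\Bes(V,W)$. Whenever it converges, $J$ is a $\Bes(V,W)$-invariant bilinear form on $\sigma \otimes \tilde\sigma'$, so by the already-established GGP multiplicity one its non-vanishing detects $\dim \Hom_{\Bes(V,W)}[\sigma,\sigma']$. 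The task is then to evaluate $J$ geometrically and show the answer equals $c(\sigma,\sigma')$.

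In the supercuspidal case matrix coefficients of $\sigma$ are compactly supported modulo center, so $J$ converges absolutely without any truncation. Unfolding along the fibration $\Bes(V,W)\backslash \SO(V) \to \SO(W)$ and applying Weyl integration on $\SO(W)$, $J$ becomes an integral of $\theta_\sigma \overline{\theta_{\sigma'}}$ against the Jacobian $D^H \Delta^k$ over regular semisimple conjugacy classes. The Harish-Chandra--Howe local character expansion near central points of each $T \in \mathcal{T}$ then substitutes the leading germ coefficients $c_\sigma$ and $c_{\sigma'}$ for the characters, producing exactly the $\mathcal{T}$-sum defining $c(\sigma,\sigma')$. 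Non-elliptic tori drop out: parabolic descent identifies their contribution with $\EP$-type pairings on proper Levi subgroups carrying $\GL$-factors, which vanish by the same mechanism as in the $\EP$-vanishing lemma of Section 2 that killed the lower derivatives in the $\GL_{n+1}(F),\GL_n(F)$ story.

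Absolute convergence of the sum-integral defining $c(\sigma,\sigma')$ is a separate, more routine matter: Harish-Chandra's bounds force the singularities of the germ coefficients near the identity on an elliptic torus to be dominated by a fixed negative power of $D^H$ that is precisely absorbed by the Jacobian $D^H(t)\Delta^k(t)$, so each integrand is locally integrable on the (volume-one) torus $T(F)$, while the set $\mathcal{T}$ is finite by the classical theory of decompositions of quadratic spaces.

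The main obstacle is the tempered case, where matrix coefficients lie only in $L^{2+\epsilon}$ modulo center and $J$ no longer converges. The remedy, which is the analytic heart of Waldspurger's argument, is to introduce an Arthur-style truncation $\kappa_N$ on $\Bes(V,W)\backslash \SO(V)$, form the truncated pairing $J^{\kappa_N}$, and establish that it is asymptotically polynomial in $N$ with a well-defined constant term. One then matches constant terms: on the spectral side the constant term equals $\dim \Hom_{\Bes(V,W)}[\sigma,\sigma']$, while on the geometric side it evaluates to $c(\sigma,\sigma')$ via the germ-expansion argument of the preceding paragraphs. Controlling the polynomial-in-$N$ boundary contributions --- a local analogue of the vanishing of certain Eisenstein period terms --- requires Harish-Chandra's weak inequality for tempered matrix coefficients together with a delicate asymptotic analysis of the Bessel variety at infinity, and is where essentially all the technical work resides.
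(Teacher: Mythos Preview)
The paper does not prove this theorem at all: it is stated in Section~8 as a result of Waldspurger, with citations to \cite{Wa} and \cite{Wa1}, and no proof is given or even sketched. So there is no ``paper's own proof'' against which to compare your proposal.

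That said, your outline is a recognizable caricature of Waldspurger's actual strategy: one does build a local relative trace formula on the Bessel quotient, with a spectral side detecting $\dim\Hom_{\Bes(V,W)}[\sigma,\sigma']$ and a geometric side that, after Weyl integration and germ expansions, produces the sum over $\mathcal{T}$. The supercuspidal case is indeed the easier one, and the tempered extension does proceed via truncation and matching of constant terms. Where your sketch is thinnest is exactly where Waldspurger's papers are longest: the precise definition and analysis of the germ coefficients $c_\sigma$, $c_{\sigma'}$ (they are not simply ``the leading term'' of the character but specific linear combinations of Fourier transforms of nilpotent orbital integrals attached to the relevant regular nilpotent orbits), the proof that non-elliptic contributions vanish (this is not a direct analogue of the $\GL$ $\EP$-vanishing but a more intricate descent), and the control of the truncation in the tempered case. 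None of these is a gap in the sense of a wrong idea, but each is a substantial paper's worth of work that your paragraph gestures at rather than executes.
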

\section{Conjectured EP formula}

Given the Theorem \ref{Wal} of Waldspurger, it is most natural to propose the following conjecture on
Euler-Poincar\'e pairing following the earlier notation of
$V = X + D + W + Y$, a quadratic space over the non-archimedean local field $F$ with $W$ a quadratic subspace of 
$V$ of codimension $2k+1$. 

\begin{conj} \label{integral}
  \begin{enumerate}

\item   If $\sigma$ and $\sigma'$ are  irreducible tempered representations of $\SO(V), \SO(W)$ respectively with
  $W\subset V$, a nondegenerate subspace  with $V/W$ a split quadratic space of odd dimension, then
\[\Ext^i_{\Bes(V,W)}[\sigma,\sigma'] = 0 {\rm~~ for~~} i > 0.\]

\item   For finite length representations $\sigma$ of $\SO(V)$ 
and  $\sigma'$ of $\SO(W)$, we have:  
  \begin{eqnarray*}
 \EP_{\Bes(V,W)}[\sigma, \sigma' ] & : =&   
  \sum_i (-1)^i \dim \Ext^i_{\Bes(V,W)}[\sigma,\sigma'], \\ 
  & = & \sum _{T \in {\mathcal T}} |W(H,T)|^{-1} \int_{T(F)} c_\sigma(t) c_{\sigma'}(t) D^H(t) \Delta^k(t) dt.
  \end{eqnarray*}
\end{enumerate}

\end{conj}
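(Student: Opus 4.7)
The plan is to verify the conjecture in two stages: first prove the vanishing of higher $\Ext$ for irreducible tempered pairs (part (1)), and then extend the resulting $\Hom$ formula of Waldspurger to an Euler-Poincar\'e formula for all finite-length representations by additivity in Grothendieck groups.

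\textbf{Step 1 (Reduction to tempered irreducibles).}  Both sides of the conjectured equality in (2) are bilinear functions of $\sigma$ and $\sigma'$ in the Grothendieck group $K_0$ of finite-length smooth representations: the left-hand side because $\EP$ is additive in short exact sequences once the individual $\Ext^i$ are finite-dimensional (guaranteed for this spherical pair by Theorem \ref{AS} of Aizenbud-Sayag), and the right-hand side because the germ $c_\sigma$, being extracted from the Harish-Chandra character $\theta_\sigma$, is additive as a function of $\sigma$. By the Langlands classification, $K_0$ is generated by standard modules of the form $\pi_0 \times \sigma_0$ with $\sigma_0$ tempered on a smaller classical group. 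Moreover, using Theorem \ref{EP} and the cuspidal-invariance identity $\EP_{\Bes}[\lambda_1,\lambda_2] = \EP_{\Bes}[\tau_1 \times \lambda_1, \tau_2 \times \lambda_2]$ cited from \cite{GGP}, one can absorb the $\GL$-inducing data into $\dim\Wh$ factors on the left; the matching descent on the right follows from regularity of tempered characters on the elliptic set combined with the decomposition $V = W_T \oplus W'_T \oplus D \oplus Z$ in the definition of $\underline{\mathcal T}$. Iterating, both sides reduce to a bilinear pairing on the subspace of $K_0$ spanned by irreducible tempered representations of $\SO(V)$ and $\SO(W)$.

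\textbf{Step 2 (Tempered case).}  For irreducible tempered $\sigma,\sigma'$, granting part (1) of the conjecture gives $\EP_{\Bes(V,W)}[\sigma,\sigma'] = \dim\Hom_{\Bes(V,W)}[\sigma,\sigma']$, and Waldspurger's theorem of the previous section then identifies this with the integral $c(\sigma,\sigma')$, finishing part (2) in the tempered case. To prove the vanishing in part (1), I would imitate the Chan-Savin strategy behind Theorem \ref{vanishing} for $\GL_{n+1}\supset\GL_n$: realize the ambient tempered $\sigma$ as a summand of a parabolically induced $\pi_0 \times \sigma_0$ with $\sigma_0$ an essentially discrete series on a smaller classical group, apply the Moeglin-Waldspurger filtration on the restriction to $\Bes(V,W)$ used in the proof of Theorem \ref{EP}, and reduce the $\Ext^{i>0}$ vanishing to projectivity of the Gelfand-Graev-Bessel module $\ind_{\Bes(W_0)}^{\SO(W)}(\rho_0 \otimes \psi)$ for cuspidal $\rho_0$, which is Proposition \ref{proj}. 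Second adjointness and the $\dim\Wh$ cancellations reduce the positive-degree contributions from the filtration to the projective term, where they die.

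\textbf{Main obstacle.}  The core difficulty is proving $\Ext$-vanishing for pairs of tempered representations that are not fully parabolically induced from cuspidal data, i.e., when non-cuspidal discrete series enter the picture. This is precisely the issue flagged in the remarks after Theorem \ref{CS}: even for $\GL$, the analogous vanishing required the substantial machinery of Chan-Savin, and the corresponding theorem for $\SO(V)$ is not yet in the literature. A block-by-block analysis parallel to Theorem \ref{thmchan} of Chan, identifying on which Bernstein components of $\SO(W)$ a given tempered $\sigma$ of $\SO(V)$ restricts to a projective module, would likely be needed before Step 2 can be completed. A secondary technical point is justifying the additivity on the right-hand side in Step 1: one must verify that the sum over $T\in\mathcal{T}$ of the regularized germ integrals is compatible with short exact sequences and with the Kunneth-style decomposition implicit in adding hyperbolic spaces, which is a careful Harish-Chandra descent but not, to my knowledge, recorded explicitly in this form.
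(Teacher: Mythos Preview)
The statement is a \emph{conjecture}, and the paper does not prove it; what the paper proves is the conditional theorem immediately following it, namely that part (1) implies part (2) (and that Ext-vanishing extends from tempered to standard modules). Your proposal is essentially an outline of that same conditional argument together with an honest acknowledgment that part (1) is open, so in that sense your assessment of the situation is accurate.

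For the conditional implication (1)$\Rightarrow$(2), your route and the paper's differ in detail. The paper works directly with standard modules as a basis of $K_0$, invokes Proposition~1.1 of \cite{Mo-Wa} and its central-exponent argument to obtain $\dim\Ext^i[\sigma,\sigma']\leq\dim\Ext^i[\sigma_0,\sigma_0']$ for all $i$, thereby transporting Ext-vanishing from tempered to standard, and then matches the integral side via the van Dijk formula (Lemma~2.3 of \cite{Wa1}). You instead peel off $\dim\Wh$ factors using Theorem~\ref{EP}; this is a legitimate alternative for the left side, but your treatment of the right side (``regularity of tempered characters on the elliptic set'') is exactly the point where the paper needs the van Dijk formula, and you have not supplied a substitute. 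Your ``secondary technical point'' is thus the same gap the paper closes with a specific citation.

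For part (1) itself, your Step~2 sketch via the Moeglin--Waldspurger filtration and projectivity of $\ind_{\Bes(W_0)}^{\SO(W)}(\rho_0\otimes\psi)$ only reaches the case where $\sigma$ is fully parabolically induced from cuspidal data, which is essentially Proposition~\ref{son} and already covered in the paper. It does not touch the genuinely open case of tempered $\sigma$ built from non-cuspidal discrete series, as you yourself note in your ``Main obstacle''. So your proposal does not advance beyond the paper on the actual content of the conjecture; it recovers the conditional reduction and correctly locates where the real work remains.
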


\begin{remark}
  \begin{itemize}
    \item Waldspurger integral formula is available also in the work of R. Beuzart-Plessis for unitary groups.

\item For both the assertions of Conjecture \ref{integral}, 
  we are reduced to the ``basic case'' when
  $\dim(V/W)=1$ by Proposition \ref{basic-case}.

  \item  A general integral formula  for spherical varieties has been formulated by Chen Wan in \cite{Wan}.
\end{itemize}
\end{remark}

In what follows,  let
  \[\sigma = \pi_1|\cdot|_F^{b_1}\times \cdots \times  \pi_t|\cdot|_F^{b_t} \times \sigma_0,\]
  be a standard module for $\SO(V)$, thus, we have:
  \begin{enumerate}
  \item For $i=1,\cdots, t$, $\pi_i$ is an irreducible, admissible, tempered representation of $\GL_{n_i}(F)$.
  \item  $\sigma_0$ is an irreducible, admissible, tempered representation of $\SO(V_0)$ where $V_0\subset V$
    is a quadratic subspace of $V$ such that the quadratic space $V/V_0$ is an orthogonal direct sum
    of hyperbolic spaces of dimensions $2n_i$.
  \item The $b_i$ are real numbers with $b_1\geq b_2\geq \cdots \geq b_t \geq 0$.
  \end{enumerate}

  Similarly, let
  \[\sigma' = \pi'_1|\cdot|_F^{b'_1}\times \cdots \times  \pi'_{t'}|\cdot|_F^{b'_{t'}} \times \sigma'_0,\]
  be a standard module for $\SO(W)$.

  The following theorem is Proposition 1.1 of \cite{Mo-Wa}; its variant for unitary groups is Proposition 9.4 due to
  Gan-Ichino, cf. \cite{GI}, and for the general linear group, it is  due to Chan in \cite{Chan2}.
  \begin{thm} \label{MW-GI-Ch}
    With the notation as above, in particular when $\sigma,\sigma'$ are standard modules
    of $\SO(V),\SO(W)$ respectively, and denoting $\check{\sigma}'$, the dual of $\sigma'$ (following the notation of
\cite{Mo-Wa} momentarily), we have:
     \[ \dim \Hom_{\SO(W)}[\sigma, \check{\sigma}'] = \dim \Hom_{\SO(W_0)}[\sigma_0, \sigma_0'] = \dim \Hom_{\SO(W_0)}[\sigma_0, \check{\sigma}_0'] .\]
    \end{thm}

  \begin{remark}
    A consequence of Theorem \ref{MW-GI-Ch} is an extension of the multiplicity one theorem of
    A. Aizenbud, D. Gourevitch, S. Rallis and G. Schiffmann in \cite{AGRS} to standard modules
    which are not necessarily irreducible, but notice that the theorem above is only about $\Hom_{\SO(W)}[\sigma, \check{\sigma}']$ 
    when $\sigma, \sigma'$ are standard modules of $\SO(V),\SO(W)$ respectively, and not for any two general
    principal series representations (for which it is false).
  \end{remark}

  \begin{cor} \label{vanishing}
With the notation as above, in particular when $\sigma,\sigma'$ are standard modules
    of $\SO(V),\SO(W)$ respectively, and denoting $\check{\sigma}'$, the dual of $\sigma'$, we have:
     \[ \dim \Ext^i_{\SO(W)}[\sigma, \check{\sigma}'] \leq  \dim \Ext^i_{\SO(W_0)}[\sigma_0, \sigma_0'] {\rm ~~ for~~all~~} i\geq 0.\]
     In particular,  if  $\Ext^i_{\SO(W_0)}[\sigma_0, \sigma_0']=0 $, for all $i>0$,  then  $\Ext^i_{\SO(W)}[\sigma, \check{\sigma}']=0$
     also for all $i>0$.
  \end{cor}

  \begin{proof} The proof of this corollary is verbatim the same as the proof of Theorem \ref{MW-GI-Ch} which uses
    the Bernstein-Zelevinski filtration on representations of classical groups, as discussed during the proof of Theorem \ref{EP}; the
    proof eventually compares exponents of representations of $\GL_d(F)$ appearing on the two sides of the Hom spaces
    to say that they must be zero for all but one term. This argument works equally well for the conclusion of the Ext
    spaces. \end{proof}

The following theorem asserts that once  $\Ext_{\Bes}^i[\pi_1,\pi_2]$ are known to be zero for tempered
representations for $i \geq 1$, the Conjecture \ref{integral} on  Waldspurger integral formula giving
Euler-Poincar\'e characteristic for all finite length representations  holds.

\begin{thm} \label{conj-gen}
Assume vanishing of higher Ext groups for tempered representations, i.e.,
  for an irreducible tempered representation
  $\sigma$ of $\SO(V)$ 
and  $\sigma'$ of $\SO(W)$, assume that,
\[\Ext^i_{\Bes(V,W)}[\sigma,\sigma'] = 0  {\rm ~~for~~} i > 0.\]
Then for  any finite length representation $\sigma$ of $\SO(V)$ 
and  any finite length representation $\sigma'$ of $\SO(W)$, we have the
Euler-Poincar\'e formula:
\begin{eqnarray*} \EP_{\Bes(V,W)}[\sigma, \sigma' ] & : = &   
  \sum_i (-1)^i \dim \Ext^i_{\Bes(V,W)}[\sigma,\sigma'] \\ &=&
  \sum _{T \in {\mathcal T}} |W(H,T)|^{-1} \int_{T(F)} c_\sigma(t) c_{\sigma'}(t) D^H(t) \Delta^k(t) dt.
  \end{eqnarray*}
  \end{thm}
\begin{proof}
  Note that  both sides of the proposed equality:
  \begin{eqnarray*} \EP_{\Bes(V,W)}[\sigma, \sigma' ] & = &    \sum _{T \in {\mathcal T}} |W(H,T)|^{-1} \int_{T(F)} c_\sigma(t) c_{\sigma'}(t) D^H(t) \Delta^k(t) dt,
  \end{eqnarray*}
  are bilinear forms for $\sigma$ belonging to the Grothendieck group of
  finite length representations of $\SO(V)$, and $\sigma'$ belonging to the Grothendieck group of
  finite length representations of $\SO(W)$. It, therefore suffices to prove it for
  $\sigma$ belonging to a set of generators for
  the Grothendieck group of
  finite length representations of $\SO(V)$, and $\sigma'$ belonging to a set of generators
  for the Grothendieck group of
  finite length representations of $\SO(W)$. It is well known that
  the standard modules form a set of generators, in fact a basis, of the Grothendieck group of finite length representations
  of any reductive $p$-adic group. Therefore if we can prove:
  \begin{eqnarray*} \EP_{\Bes(V,W)}[\sigma, \sigma' ] &  = &   
  \sum _{T \in {\mathcal T}} |W(H,T)|^{-1} \int_{T(F)} c_\sigma(t) c_{\sigma'}(t) D^H(t) \Delta^k(t) dt,
  \end{eqnarray*}
  for $\sigma, \sigma'$ standard modules, we would know it for all $\sigma,\sigma'$ finite length modules of $\SO(V),\SO(W)$ respectively.

  Under the assumption made for Theorem \ref{conj-gen} about vanishing of higher Ext groups for tempered representations, by Corollary \ref{vanishing},
  $\Ext^i_{\Bes(V,W)}[\sigma,\sigma']=0$ for $i>0$ for all $\sigma,\sigma'$ standard modules of $\SO(V),\SO(W)$ respectively. Therefore,
   \[ \EP_{\Bes(V,W)}[\sigma, \sigma' ] = \dim \Hom_{\Bes(V,W)}[\sigma,\sigma']= \dim  \Hom_{\Bes(V_0,W_0)}[\sigma_0,\sigma_0'],\]
   where the last equality is Theorem \ref{MW-GI-Ch} due to Moeglin-Waldspurger for orthogonal groups extended
   to unitary groups by Gan-Ichino,  and is due to Chan for the general linear group.

It  then suffices to prove that the sum:
  \[  \sum _{T \in {\mathcal T}} |W(H,T)|^{-1} \int_{T(F)} c_\sigma(t) c_{\sigma'}(t) D^H(t) \Delta^k(t) dt,\]
  is the same for the pair $(\sigma_0, \sigma'_0)$ as it is for the pair $(\sigma, \sigma')$. This is a consequence of  the
  van Dijk formula for principal series representations, see Lemma 2.3 of \cite{Wa1}. \end{proof}

\section{The Schneider-Stuhler duality theorem} 

The following theorem is a mild generalization of a
duality theorem due to  Schneider and Stuhler in \cite{Sch-Stu}, see \cite{NP}; it turns
questions on $\Ext^i[\pi_1,\pi_2] $ to $\Ext^j[\pi_2,\pi_1]$, and is of considerable
importance to our theme.

\begin{thm} \label{SS}
  Let $G$ be a reductive $p$-adic group, and $\pi$ an irreducible admissible representation of $G$. 
  Let $d(\pi) $ be the split rank of the center of the Levi subgroup $M$ of $G$ which carries the cuspidal support of $\pi$,
  $D(\pi)$ be the Aubert-Zelevinsky involution of $\pi$. Then,
     
\begin{enumerate}

\item $\Ext_G^{d(\pi)}[\pi, D(\pi)] \cong \CC$, and  
\item For any smooth  representation $\pi'$ of $G$, the bilinear pairing
\[(*) \,\,\,\,\,   \Ext^{i}_G[\pi, \pi']  \times \Ext^{j}_G[\pi', D(\pi)]  \rightarrow 
\Ext^{i+j = d(\pi)}_G[\pi, D(\pi)]   \cong \CC, \]
is non-degenerate.
\end{enumerate}
\end{thm}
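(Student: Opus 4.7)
The plan is to establish the pairing $(*)$ by setting up a Serre-type duality in the bounded derived category of the Bernstein block containing $\pi$, and identifying the dualizing object with $D(\pi)[-d(\pi)]$. The two main ingredients are the Schneider--Stuhler resolution coming from the Bruhat--Tits building (which produces finite-length projective resolutions of length bounded by the split rank) and the Bernstein decomposition (which lets us work one block at a time and reduce to cuspidal support).

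First I would reduce to the cuspidal case. Let $(M,\rho)$ be the cuspidal support of $\pi$, so that $\pi$ is an irreducible subquotient of $\Ind_P^G(\rho)$ for some parabolic $P=MN$. Using Bernstein's second adjointness, parabolic induction has Jacquet functor as both a left and right adjoint (up to the modulus twist), and both functors are exact. This lets one transport the Ext computation to the Bernstein block of $M$ determined by $\rho$, using the compatibility $D_G \circ \Ind_P^G \cong \Ind_P^G \circ D_M$ of the Aubert--Zelevinsky involution with parabolic induction. Now, within the block of $M$ containing $\rho$, Theorem \ref{Roche} of Roche identifies the block with modules over a Laurent polynomial ring $R = \CC[X_1^{\pm 1},\ldots,X_d^{\pm 1}]$ with $d = d(\pi)$ (more generally, an Azumaya algebra with this center), under which $\rho$ corresponds to a residue field $\CC = R/\mathfrak{m}$ at a $\CC$-point. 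The Koszul resolution of $\CC$ over $R$ has length exactly $d$, yielding $\Ext^d_R[\CC,\CC] \cong \wedge^d \CC^d \cong \CC$, which gives assertion (1). For any $R$-module $N$, the Yoneda pairing
\[\Ext^i_R[\CC,N] \times \Ext^{d-i}_R[N,\CC] \longrightarrow \Ext^d_R[\CC,\CC] \cong \CC\]
is non-degenerate: computing both sides via the Koszul resolution of $\CC$, the pairing becomes the standard non-degenerate pairing of exterior powers $\wedge^i \CC^d \times \wedge^{d-i} \CC^d \to \wedge^d \CC^d$, twisted by the coefficients of $N$. Transporting back through the block equivalence and through parabolic induction yields assertion (2) for $\pi$.

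The main obstacle is the compatibility check that the dualizing object produced by the building-resolution on $\pi$ really is $D(\pi)[-d(\pi)]$ and not some other avatar of $\pi$. This is the original content of Schneider--Stuhler and Bezrukavnikov: the top cohomology of the coefficient system on $\mathcal{B}(G)$ attached to $\pi$, combined with an Euler--Poincar\'e functor built from iterated Jacquet modules, recovers $D(\pi)$ on the nose. The subtlety is that $D(\pi)$ is a priori defined only in the Grothendieck group (as an alternating sum of parabolic inductions of Jacquet modules), so one must lift this identity to a genuine isomorphism in the derived category, which is precisely what is needed for $(*)$ to hold and where the bulk of the technical work lies; once that identification is in hand, the rest of the argument is formal Serre duality, combined with additivity of $\Ext$-groups in short exact sequences to extend from irreducible $\pi'$ to arbitrary smooth $\pi'$ of finite length.
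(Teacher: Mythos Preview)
The paper does not supply a proof of Theorem~\ref{SS}; it is quoted from Schneider--Stuhler \cite{Sch-Stu} with the mild generalization in \cite{NP}, so there is no in-paper argument to compare against. Judging your sketch on its own:

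There is a genuine gap in the reduction step. You invoke Theorem~\ref{Roche} to identify the Bernstein block of $G$ with modules over a Laurent polynomial ring and then run a Koszul-duality argument. But Roche's theorem carries the hypothesis that no nontrivial element of $N_G(M)/M$ fixes $\rho$ up to unramified twist. This hypothesis fails precisely for the blocks of greatest interest --- the Iwahori block, or any block containing a non-cuspidal discrete series --- and in those cases the block is governed by a genuinely noncommutative affine Hecke-type algebra, for which the clean Koszul picture over $\CC[X_1^{\pm1},\dots,X_d^{\pm1}]$ is not available. Even when Roche's hypothesis is satisfied, your phrase ``transporting back through parabolic induction'' glosses over a real difficulty: $\pi$ is an arbitrary irreducible \emph{subquotient} of $\Ind_P^G(\rho)$, not the full induced module, so adjunction does not reduce $\Ext^i_G[\pi,\pi']$ to an $\Ext_M$-computation in the way you suggest.

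You correctly locate the crux in your second paragraph --- identifying the dualizing object of the block with $D(\pi)[-d(\pi)]$ via the Schneider--Stuhler resolution on the building --- but then defer it entirely to the literature. That identification \emph{is} the theorem; once one has it, the perfect pairing is formal Serre duality, as you say. A smaller point: the statement is for arbitrary smooth $\pi'$, whereas your extension by short exact sequences reaches only finite-length $\pi'$.
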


\section{An example: triple products for $\GL_2(F)$}

As suggested earlier, we expect that for all the GGP pairs $(G,H)$, when the irreducible representation
$\pi_1$ of $G$ and $\pi_2$ of $H$ are tempered,
$\Ext^i_H[\pi_1,\pi_2]$  is non-zero only for $i=0$.

On the other hand, by the duality theorem discussed in the last section,
we expect that $\Ext^i_H[\pi_2,\pi_1]$ is typically zero for $i=0$, i.e., $\Hom_H[\pi_2,\pi_1]=0$ (so no wonder branching is usually not considered as a subrepresentation!), and shows up only for  $i$ equals the split rank of the center of the Levi 
from which $\pi_2$  arises through parabolic induction of a 
supercuspidal representation. This is not completely correct as we will see.

The purpose of this section is to  do an explicit  restriction problem as
an example of what happens for classical groups
in one specific instance: the restriction problem from split $\SO(4)$ to split $\SO(3)$.
Thus we  calculate $\Ext^i_{\SO_3(F)}[V,V']$, $i\geq 0$, and $\EP[V,V']$,  
for $V$ a  representation of $\SO_4(F)$ of finite length,
and $V'$ a representation of  $\SO_3(F)$ of finite length, and then investigate when the restriction of $V$ from $\SO_4(F)$ to
$\SO_3(F)$  is a projective module. As a consequence of what we do here, we will have constructed a projective module
in the Iwahori block of $\SO_3(F)=\PGL_2(F)$ which is different from what we encountered earlier in the restriction
problem from $\GL_3(F)$ to $\GL_2(F)$ which all had only generic representations as a quotient, but here there will
be another possibility. (In fact Lemma 2.4 of  \cite{CS3} has two options for projective modules which have multiplicity 1, and this other possibility which we will see here is the second  option for projective modules in  Lemma 2.4 of  \cite{CS3}.)

Since
$\SO_4(F)$ and $\SO_3(F)$ are closely related to $\GL_2(F) \times \GL_2(F)$ 
and $\GL_2(F)$ respectively, we equivalently consider $V \cong \pi_1 \otimes \pi_2$  
for admissible representations $\pi_1, \pi_2$ of $\GL_2(F)$, and 
$V' = \pi_3$ of $\GL_2(F)$. Our aim then is to calculate

$$\Ext^i_{\GL_2(F)}[\pi_1 \otimes \pi_2, \pi_3],$$ 
or since we will prefer not to bother with central characters, we assume
that $\pi_1 \otimes \pi_2$ and $\pi_3$ have trivial central characters, and we will
then calculate,
$$\Ext^i_{\PGL_2(F)}[\pi_1 \otimes \pi_2, \pi_3].$$ 

\begin{prop} \label{trilinear} Let 
$\pi_1, \pi_2$ and $\pi_3$ 
be either irreducible, infinite dimensional  representations of $\GL_2(F)$, or (reducible) 
principal series  representations 
of $\GL_2(F)$ induced from one dimensional representations. Assume that the product of the central characters  of $\pi_1$ and $\pi_2$ is trivial, and $\pi_3$ is
of trivial central character. Then,
\[ \EP_{\PGL_2(F)}[\pi_1 \otimes \pi_2, \pi_3] =1,\]
except when all the representations $\pi_i$ are irreducible discrete series representations, and there is 
a $D^\times$ invariant linear form on $\pi'_1 \otimes \pi'_2 \otimes \pi'_3$ where $\pi_i'$ denotes the 
representation of $D^\times$ associated to $\pi_i$ by Jacquet-Langlands. In the case when
$\pi'_1 \otimes \pi'_2 \otimes \pi'_3$ has a $D^\times$-invariant linear form, we have,
\[ \EP_{\PGL_2(F)}[\pi_1 \otimes \pi_2, \pi_3] =0,\]
and,
\[ \EP_{D^\times}[\pi'_1 \otimes \pi'_2, \pi'_3] =1.\]
\end{prop}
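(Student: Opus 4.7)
My plan is a case analysis on the type of each $\pi_i$, with two principal cases: (I) all $\pi_i$ are full (possibly reducible) principal series $\Ind_B^G(\chi_i)$, and (II) at least one $\pi_i$ is supercuspidal. Reducible principal series induced from one-dimensional characters are already of the form in (I). Twists of the Steinberg representation --- the remaining non-cuspidal discrete series of $\PGL_2(F)$ --- sit in the short exact sequence $0 \to \St \to \Ind_B^G|\cdot|^{1/2} \to \CC \to 0$; in the Grothendieck group, $[\St] = [\Ind_B^G|\cdot|^{1/2}] - [\CC]$, so by bilinearity of $\EP$ in short exact sequences, the Steinberg situation reduces to case (I) together with auxiliary contributions involving the trivial character of $\PGL_2(F)$.

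In case (I), for $\pi_i = \Ind_B^G(\chi_i)$ with $\chi_1\chi_2\chi_3 = 1$, the strategy is to apply Frobenius reciprocity for the open subgroup $B$ to obtain
\[ \Ext^j_G[\pi_1 \otimes \pi_2, \pi_3] \cong \Ext^j_B[(\pi_1 \otimes \pi_2)|_B, \chi_3], \]
and then to analyze $(\pi_1 \otimes \pi_2)|_B$ via the stratification of $(G/B) \times (G/B) = \mathbb{P}^1(F) \times \mathbb{P}^1(F)$ into the $B$-orbits under the diagonal action: a closed orbit (the diagonal image of the $B$-fixed point) and the open orbit. This yields a two-step filtration of $(\pi_1\otimes\pi_2)|_B$ whose associated graded pieces are compactly induced modules on which $\Ext$ against $\chi_3$ is computable directly. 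Summing these contributions, with the appropriate modulus characters inserted, gives $\EP = 1$, consistent with the Whittaker-dimension formula of Theorem \ref{whittaker} applied to $\GL_2(F) \times \GL_2(F) \supset \GL_2(F)$.

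In case (II), cuspidal representations are both projective and injective in the category of smooth representations of $\PGL_2(F)$. If $\pi_3$ is cuspidal it is injective; if $\pi_1$ (or $\pi_2$) is cuspidal, then $\pi_1 \otimes \pi_2$ with diagonal $G$-action is projective via the adjunction $\Hom_G(\pi_1 \otimes \pi_2, W) \cong \Hom_G(\pi_1, \Hom_\CC(\pi_2, W)^{\rm sm})$ combined with the exactness of $\Hom_\CC(\pi_2, -)$ on smooth surjections. In either instance $\Ext^j_G[\pi_1 \otimes \pi_2, \pi_3] = 0$ for $j \geq 1$, so $\EP = \dim \Hom$, and the local classification of trilinear forms from \cite{Pr1} gives $\dim \Hom = 1$ except exactly when all three $\pi_i$ are discrete series with a compatible $D^\times$-invariant form on $\pi_1' \otimes \pi_2' \otimes \pi_3'$, in which case $\dim \Hom = 0$. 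The main obstacle is the combinatorial bookkeeping for the all-Steinberg exceptional subcase: after expanding $[\St]$ in the Grothendieck group, one must sum several Euler characteristics involving triples of principal series and the trivial character on $\PGL_2(F)$, each computable via Kazhdan orthogonality or direct Mackey analysis, and verify that they assemble to give $\EP \neq 1$ precisely when the $D^\times$-triple form exists.
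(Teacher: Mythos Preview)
Your proposal is correct and follows essentially the same route as the paper's (brief) argument: Mackey/orbit analysis for the full principal series case, vanishing of higher $\Ext$'s when one factor is supercuspidal (reducing to the known $\Hom$ computation of \cite{Pr1}), and handling Steinberg-type factors by additivity of $\EP$ via the exact sequence $0\to\St\to\Ind_B^G|\cdot|^{1/2}\to\CC\to0$. The only difference is emphasis: the paper simply says ``embed the Steinberg in the corresponding principal series and use additivity,'' whereas you spell out that the resulting auxiliary terms involve $\EP_{\PGL_2}[\,\cdot\,,\chi]$ for one-dimensional $\chi$, which are group-case Euler--Poincar\'e pairings computable by Kazhdan orthogonality; this extra explicitness is harmless and indeed clarifies why the all-Steinberg triple lands in the exceptional case.
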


\begin{proof}
The proposition follows from more general earlier results on Euler-Poincar\'e characteristic for principal
series representations or can be deduced directly from Mackey theory, we give a brief
outline of the proof.

If at least one of the $\pi_i$ is cuspidal, then it is easy to see
that $ \Ext^1_{\PGL_2(F)}[\pi_1 \otimes \pi_2, \pi_3] =0$, and the proposition is equivalent to by-now well-known
results, cf. \cite{Pr1} about $ \Hom_{\PGL_2(F)}[\pi_1 \otimes \pi_2, \pi_3]$. The proposition in case one of the $\pi_i$'s is a twist of the Steinberg representation of $\GL_2(F)$ follows by embedding the Steinberg representation
in the corresponding principal series, and using additivity of EP in exact sequences.  \end{proof}

Since for $\PGL_2(F)$, the only nonzero $\Ext^i$ can be for $i=0,1$, $\EP$ together with knowledge of $\Hom$ spaces implies
the following corollary, a case of the very general conjecture, Conjecture \ref{integral} (1), in the special case of the pair
$(\SO_4(F),\SO_3(F))$ with $\SO_4(F)$ a split group.

\begin{cor} \label{ext-vanish} If  $\pi_1, \pi_2$ and $\pi_3$  are any three irreducible, infinite dimensional  representations 
 of $\GL_2(F)$, 
with the product of the central characters  of $\pi_1$ and $\pi_2$ trivial, and $\pi_3$ 
of trivial central character, then $\Ext^i_{\PGL_2(F)}[\pi_1 \otimes \pi_2,\pi_3] = 0$ for $i>0$.
\end{cor}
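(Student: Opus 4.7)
The plan is to exploit the fact that $\PGL_2(F)$ has split $F$-rank equal to one, so that the projective resolution of $\CC$ coming from the Bruhat-Tits tree (recalled in the introduction) forces
\[ \Ext^i_{\PGL_2(F)}[\pi_1 \otimes \pi_2, \pi_3] = 0 \qquad \text{for all } i \geq 2. \]
Hence the Euler-Poincar\'e characteristic collapses to a single difference,
\[ \EP_{\PGL_2(F)}[\pi_1 \otimes \pi_2, \pi_3] = \dim \Hom_{\PGL_2(F)}[\pi_1 \otimes \pi_2, \pi_3] - \dim \Ext^1_{\PGL_2(F)}[\pi_1 \otimes \pi_2, \pi_3], \]
and so it will suffice to match the value of $\EP$ supplied by Proposition \ref{trilinear} against the value of $\dim \Hom$ supplied by the local trilinear form theorem of \cite{Pr1}.

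First I would handle the \emph{split} case: either at least one of $\pi_1, \pi_2, \pi_3$ is a principal series representation, or all three are discrete series but the associated $D^\times$-invariant form on $\pi_1' \otimes \pi_2' \otimes \pi_3'$ vanishes. Here Proposition \ref{trilinear} gives $\EP = 1$, while Prasad's dichotomy from \cite{Pr1} gives $\dim \Hom_{\PGL_2(F)}[\pi_1 \otimes \pi_2, \pi_3] = 1$ in precisely this regime (the trilinear form must land on exactly one side of the quaternionic dichotomy, and here it lands on the split side). Therefore $\dim \Ext^1 = 1 - 1 = 0$.

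Next I would treat the \emph{quaternionic} case, when all three $\pi_i$ are discrete series and the form on the non-split side is non-zero. Then Proposition \ref{trilinear} places us in the exceptional regime, so $\EP = 0$, while the same dichotomy from \cite{Pr1} forces $\dim \Hom_{\PGL_2(F)}[\pi_1 \otimes \pi_2, \pi_3] = 0$. Thus $\dim \Ext^1 = 0 - 0 = 0$ as well, completing the argument.

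The one subtle point is checking that Proposition \ref{trilinear} and the Hom-side dichotomy of \cite{Pr1} really do partition the set of admissible triples $(\pi_1, \pi_2, \pi_3)$ in matched fashion, so that the Euler-Poincar\'e value and the Hom dimension agree case by case. Once this matching is verified---and it amounts to the assertion that the same local epsilon invariant governs both the appearance of a non-trivial quaternionic form in Proposition \ref{trilinear} and the vanishing of the split trilinear form in Prasad's theorem---no independent calculation of $\Ext^1$ is required, and the corollary follows by pure bookkeeping from the split-rank-one vanishing.
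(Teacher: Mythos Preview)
Your argument is correct and is exactly the one-line proof the paper gives: since $\PGL_2(F)$ has split rank one, only $\Ext^0$ and $\Ext^1$ can survive, so $\dim\Ext^1 = \dim\Hom - \EP$, and matching the value of $\EP$ from Proposition~\ref{trilinear} against the value of $\dim\Hom$ from the trilinear-form dichotomy of \cite{Pr1} forces $\Ext^1=0$ in every case. The only point to be slightly careful about is that Proposition~\ref{trilinear} as stated does not explicitly say $\EP=0$ in the exceptional (quaternionic) regime---it only says $\EP\neq 1$---but this follows because in that regime all three $\pi_i$ are discrete series, and the proof of Proposition~\ref{trilinear} already notes that $\Ext^1=0$ once any $\pi_i$ is cuspidal (and handles Steinberg twists via principal series), so $\EP=\dim\Hom=0$ there.
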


\begin{remark}
  The paper \cite{CF} of  Cai and Fan  
  proves more generally that
  \[ \Ext^i_{\GL_2(F)}[\Pi, \CC] = 0 ~~~{\rm~~ for ~~~} i \geq 1,\]  
  where $\Pi$ is an irreducible generic representation of $\GL_2(E)$, $E$ a cubic \'etale extension of $F$,
      whose central character restricted to $F^\times$ is trivial.
  \end{remark}

We now use Proposition \ref{trilinear} and Corollary \ref{ext-vanish}
to study the restriction problem from $[\GL_2(F) \times \GL_2(F)]/\Delta (F^\times)$ to $\PGL_2(F)$, and to understand when $\pi= \pi_1 \otimes \pi_2$ where $\pi_1, \pi_2$   are any two irreducible, infinite dimensional  representations 
 of $\GL_2(F)$, 
with the product of the central characters  of $\pi_1$ and $\pi_2$ trivial, is a projective representation of $\PGL_2(F)$

\begin{prop} \label{gl2} Let 
$\pi_1, \pi_2$ 
  be irreducible, infinite dimensional  representations of $\GL_2(F)$ such that the product of the central
  characters  of $\pi_1$ and $\pi_2$ is trivial. Then the representation $\pi_1\otimes \pi_2$ of $\PGL_2(F)$ is a
  projective module unless  $\pi_1,\pi_2$ are irreducible principal series representations
  of $\GL_2(F)$ such that  $\pi_1 \cong \chi \pi_2^\vee$ with $\chi$ a quadratic character, in which case it is
  not a projective module exactly in the block of $\PGL_2(F)$ containing the character $\chi$. In particular,
  if at least one of $\pi_1$ or $\pi_2$ is a twist of the Steinberg representation, $\pi_1\otimes \pi_2$ is a projective
  module in the category of smooth representations of $\PGL_2(F)$.
\end{prop}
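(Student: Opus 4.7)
The plan is to apply the cohomological criterion for projectivity worked out in the paragraph preceding the proposition. Since $\PGL_2(F)$ has split rank one, $\Ext^i_{\PGL_2(F)}$ vanishes in degrees $\ge 2$, and the restriction of $\pi_1\otimes\pi_2$ to $\PGL_2(F)$ is locally finitely generated by Theorem~\ref{AS}. Consequently, projectivity of $\pi_1\otimes\pi_2$ in a given Bernstein block amounts to the vanishing of $\Ext^1_{\PGL_2(F)}[\pi_1\otimes\pi_2,\pi']$ for every irreducible $\pi'$ in that block. For $\pi'$ irreducible and infinite-dimensional, Corollary~\ref{ext-vanish} supplies the vanishing directly; the only remaining candidates are the one-dimensional characters $\chi$ of $\PGL_2(F)$, which factor through $F^\times/F^{\times 2}$ and are therefore quadratic.

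For such a $\chi$, tensor--Hom adjunction gives
\[ \Ext^1_{\PGL_2(F)}[\pi_1\otimes\pi_2,\chi]\cong\Ext^1_{\PGL_2(F)}[\pi_1,\chi\otimes\pi_2^\vee],\]
and I would then case-split on $\pi_1$. If $\pi_1$ is supercuspidal it is projective and the Ext vanishes trivially. If $\pi_1$ is an essentially Steinberg representation, then $d(\pi_1)=1$ and $D(\pi_1)$ is a one-dimensional character, so the Schneider--Stuhler duality of Theorem~\ref{SS} identifies $\Ext^1_{\PGL_2(F)}[\pi_1,\chi\pi_2^\vee]$ with the $\CC$-dual of $\Hom_{\PGL_2(F)}[\chi\pi_2^\vee,D(\pi_1)]$, which vanishes because $\chi\pi_2^\vee$ is irreducible and infinite-dimensional. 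The only surviving case is $\pi_1$, and hence $\pi_2^\vee$, an irreducible principal series: here $d(\pi_1)=1$, the Aubert--Zelevinsky involution fixes $\pi_1$, and Theorem~\ref{SS} produces a perfect pairing
\[ \Ext^1_{\PGL_2(F)}[\pi_1,\chi\pi_2^\vee]\times\Hom_{\PGL_2(F)}[\chi\pi_2^\vee,\pi_1]\to\CC,\]
so $\Ext^1$ is non-zero exactly when $\pi_1\cong\chi\pi_2^\vee$ with $\chi$ quadratic, which is precisely the exceptional configuration in the proposition.

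The block identification follows at once, since $\chi$ and $\pi_1\cong\chi\pi_2^\vee$ lie in a common Bernstein component once the exceptional isomorphism holds; the failure of projectivity is confined to that block. The concluding assertion about Steinberg twists follows by exclusion: the Aubert--Zelevinsky involution sends a Steinberg twist to a character rather than a principal series, so the exceptional relation $\pi_1\cong\chi\pi_2^\vee$ cannot hold when either $\pi_i$ is a twist of Steinberg, and $\pi_1\otimes\pi_2$ is then projective in every Bernstein block. The most delicate step I anticipate is the invocation of Schneider--Stuhler in the principal-series case, in particular verifying $D(\pi_1)\cong\pi_1$ and tracking central characters through the tensor--Hom adjunction so that the Ext is legitimately computed inside the $\PGL_2(F)$-category rather than the full $\GL_2(F)$-category.
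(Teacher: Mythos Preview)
Your argument follows the same reduction as the paper's: local finite generation plus the rank-one bound reduces projectivity to vanishing of $\Ext^1$ against irreducibles, Corollary~\ref{ext-vanish} handles the infinite-dimensional targets, and tensor--Hom adjunction reduces the remaining character case to computing $\Ext^1_{\PGL_2(F)}[\pi_1,\chi\pi_2^\vee]$. The only genuine difference is in how this last Ext is evaluated. The paper simply asserts that it is nonzero precisely when $\pi_1\cong\chi\pi_2^\vee$ with both irreducible principal series, citing a well-known generality about discrete series (and Lemma~7 of \cite{Pr2}) for the vanishing of $\Ext^1[\St_2,\chi\St_2]$; you instead case-split on the type of $\pi_1$ and invoke the Schneider--Stuhler duality of Theorem~\ref{SS} to make the computation explicit. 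Your route is more self-contained within the paper's own framework, while the paper's is terser but leans on an outside reference. Both are correct. Your flagged concern about central characters is apt but harmless: since $\chi$ is quadratic and $\omega_{\pi_1}\omega_{\pi_2}=1$, the representations $\pi_1$ and $\chi\pi_2^\vee$ share a common central character, so the Ext may be computed in the category of $\GL_2(F)$-modules with that fixed central character, which after a twist is equivalent to smooth $\PGL_2(F)$-modules, and Theorem~\ref{SS} applies there.
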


\begin{proof}
  As discussed
earlier, if a smooth representation $\pi$ of $\PGL_2(F)$ is locally finitely generated, then it is a projective module in the category
of smooth representations of $\PGL_2(F)$ if and only if
\[ \Ext^1_{\PGL_2(F)}[\pi, \pi'] = 0, \]
for all smooth  finitely generated representations $\pi'$ of $\PGL_2(F)$ which is the case   
if and only if
\[ \Ext^1_{\PGL_2(F)}[\pi, \pi'] = 0, \]
for all finite length representations $\pi'$ of $\PGL_2(F)$, which is the case 
if and only if
\[ \Ext^1_{\PGL_2(F)}[\pi, \pi'] = 0, \]
for all irreducible representations $\pi'$ of $\PGL_2(F)$.

In our case, $\pi= \pi_1 \otimes \pi_2$ where $\pi_1, \pi_2$   are any two irreducible, infinite dimensional  representations 
 of $\GL_2(F)$, 
with the product of the central characters  of $\pi_1$ and $\pi_2$ trivial. Therefore if $\pi'$ is any  
 infinite dimensional irreducible  representation 
 of $\PGL_2(F)$, the desired vanishing of $\Ext^1[\pi,\pi']$ is  Corollary \ref{ext-vanish}. Therefore to
 prove projectivity of $\pi= \pi_1 \otimes \pi_2$ as a representation of $\PGL_2(F)$, it suffices to check that,
 \[ \Ext^1_{\PGL_2(F)}[\pi_1\otimes \pi_2, \chi] =  0, \]
 for $\chi: F^\times /F^{\times 2} \rightarrow \CC^{\times}$, treated as a character of $\PGL_2(F)$. Now,
 \[ \Ext^1_{\PGL_2(F)}[\pi_1\otimes \pi_2, \chi] \cong \Ext^1_{\GL_2(F), Z}[\pi_1, \chi \pi_2^\vee],\]
 where $\Ext^1_{\GL_2(F), Z}[\pi_1, \chi \pi_2^\vee]$ denotes $\Ext^1$ calculated in the category of smooth
 representations of $\GL_2(F)$ with a given central character on $Z=F^\times \subset \GL_2(F)$; the above isomorphism
 is Proposition 2.4 of \cite{CF2}.

 It is easy to see that if $\pi_1,\pi_2$ are irreducible and infinite dimensional representations of $\GL_2(F)$, then 
 $\Ext^1_{\GL_2(F), Z}[\pi_1, \chi \pi_2^\vee]$ is not zero if and only if $\pi_1,\pi_2^\vee$ are irreducible principal series representations
 of $\GL_2(F)$ such that  $\pi_1 \cong \chi \pi_2^\vee$ with $\chi$ a quadratic character. (Vanishing of $\Ext^1_{\PGL_2(F)}[\St_2, \chi \St_2]$ is a well-known generality about vanishing of
 $\Ext^i_G(\pi_1,\pi_2), i \geq 1$, among discrete series representations of a semisimple group $G$;
 for a proof in this case, see Lemma 7 of \cite{Pr2}.) 
\end{proof}

\begin{cor} \label{gl2Bessel} Let $T$ be the split torus of $\PGL_2(F)$ consisting of the group of 
  diagonal matrices. Then for $\chi$ a character of $T$, the Bessel model for $\PGL_2(F)$,
  \[ \ind_T^{\PGL_2(F)} (\chi),\]
  is a projective module for $\PGL_2(F)$ if and only if $\chi^2 \not = 1$. 
  \end{cor}
\begin{proof} By Proposition \ref{gl2} applied to $\pi_1 =  \ind_B^{\PGL_2(F)}(\chi)$,  an irreducible principal
  series representation of $\PGL_2(F)$,  and $\pi_2=\St_2$, the Steinberg representation of $\PGL_2(F)$,
  $\pi_1 \otimes \pi_2$ is a projective module for $\PGL_2(F)$. Note that:
  \[ \St_2|_{B} \cong \ind_T^B(\CC),\]
  where $B$ is the Borel subgroup of $\PGL_2(F)$ containing $T \cong F^\times$,
    and
 $\ind_T^B(\CC)$ is the unnormalized induction of the trivial character of $T$.
  This follows by realising the Steinberg representation as a  quotient of the
  (un-normalised) induced
  representation  of $\PGL_2(F)$ induced by the trivial character of $B$, and applying the Mackey theory. 
  Since $\pi_1 = \ind_B^{\PGL_2(F)}(\chi)$, it follows that:
  \[ \pi_1 \otimes \St_2 = \ind_B^{\PGL_2(F)}(\chi) \otimes \St_2 =  \ind_B^{\PGL_2(F)}(\chi \otimes \St_2|_B) \cong \ind_T^{\PGL_2(F)}(\chi).\]
  As $\pi_1  = \ind_B^{\PGL_2(F)}(\chi)$ (un-normalised induction) is an irreducible principal series for
  $\chi^2 \not = 1, \nu^2$
  where $\nu: F^\times \rightarrow \CC^\times$ is the normalized absolute value on $F^\times$, Proposition \ref{gl2}  proves that  $\ind_T^{\PGL_2(F)} (\chi)$
  is a projective module for $\PGL_2(F)$ if $\chi^2 \not = 1$
  and $\chi^2 \not = \nu^2$.
  If  $\chi^2= \nu^2$, we appeal to the isomorphism:  $\ind_T^{\PGL_2(F)} (\chi) \cong \ind_T^{\PGL_2(F)} (\chi^{-1})$,
  to deduce that  $\ind_T^{\PGL_2(F)} (\chi)$
  is a projective module for $\PGL_2(F)$ if $\chi^2 \not = 1$.

  Finally, to show that  $\ind_T^{\PGL_2(F)} (\chi)$ are not projective modules for $\chi^2=1$,
  it clearly suffices to do this for $\chi$ the trivial character of $T$, in which case the earlier
  analysis \[ \ind_B^{\PGL_2(F)}(\CC) \otimes \St_2 =  \ind_B^{\PGL_2(F)}(\St_2|_B) \cong \ind_T^{\PGL_2(F)}(\CC),\] continues to hold, but now, we have the exact sequence (un-normalized induction):
  \[ 0 \rightarrow \St_2 \rightarrow  \ind_B^{\PGL_2(F)}(\CC) \otimes \St_2 \rightarrow
  \St_2 \otimes \St_2 \rightarrow 0,\]
   in which 
  by Proposition \ref{gl2},  $\St_2 \otimes \St_2$ is a projective module, 
  therefore,  \[\ind_B^{\PGL_2(F)}(\CC) \otimes \St_2 \cong  \St_2 \otimes \St_2 + \St_2,\]
which is clearly not a projective module! 
  \end{proof}
  
\begin{remark}
  Corollary \ref{gl2Bessel} deals with a case omitted in Corollary \ref{split-SO(W)} when $\SO(W)$ is the split $\SO_2(F)$. In the present case,  $\ind_T^{\PGL_2(F)} (\CC)$ is a projective module in all Bernstein components
  except the Iwahori block.
\end{remark}

\begin{remark}
  Corollary \ref{gl2Bessel} proves  a very special case of Conjecture \ref{integral}(1), in fact the simplest case of it, dealing with the pair $(\SO_3(F),\SO_2(F))$ both split, and is equivalent to the assertion that $\Ext^i_{\SO_2(F)}(\pi,\chi)=0$ for $i>0$ where $\pi$ is any
  infinite dimensional representation of $\SO_3(F)=\PGL_2(F)$.   If the assertion of Corollary \ref{gl2Bessel} looks a bit different by the presence of the condition $\chi^2 \not = 1$, it is because projectivity of a representation $\pi$ (say of a group $G$)
  requires $\Ext^i_G(\pi,\pi')=0$  for all $i >0$, and all irreducible $\pi'$,
  whereas  Conjecture \ref{integral}(1) is only about vanishing  $\Ext_G^i(\pi,\pi')$ for certain $\pi'$ (which belong to
  generic $L$-packets).
    \end{remark}

  \vspace{2mm}

  Here is an application of the calculation on Ext groups which when combined with the
  duality theorem leads to existence of submodules. 
The 
following proposition gives a complete classification
of irreducible submodules $\pi$ of the tensor product $\pi_1 \otimes \pi_2$ of two (irreducible, infinite dimensional)
representations $\pi_1,\pi_2$ of $\GL_2(F)$ with the product of their central characters trivial. A more general result is available in \cite{CF}.

\begin{prop}
Let $\pi_1, \pi_2$ be two irreducible admissible infinite dimensional 
representations of $\GL_2(F)$ with product of their
central characters trivial. Then the following is a  complete list of irreducible sub-representations
$\pi$ of $\pi_1 \otimes \pi_2$ as $\PGL_2(F)$-modules.

\begin{enumerate}
\item $\pi$ is a supercuspidal representation of $\PGL_2(F)$, and appears as a quotient of $\pi_1 \otimes \pi_2$.  

\item $\pi$ is a twist of the Steinberg representation, which we assume by absorbing the twist in 
$\pi_1$ or $\pi_2$ to be the Steinberg representation $\St$ of $\PGL_2(F)$. Then $\St$ is a 
submodule of $\pi_1 \otimes \pi_2$ if and only if 
$\pi_1,  \pi_2$ are both irreducible
principal series representations, and $\pi_1 \cong \pi_2^\vee$.
\end{enumerate}
\end{prop}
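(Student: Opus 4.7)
The plan is to use the Schneider--Stuhler duality (Theorem \ref{SS}) to convert the existence of an irreducible subrepresentation $\pi \hookrightarrow \pi_1 \otimes \pi_2$ into a non-vanishing statement for $\Ext^{d(\pi)}_{\PGL_2(F)}[\pi_1 \otimes \pi_2, D(\pi)]$. For $\PGL_2(F)$ the Aubert--Zelevinsky involution $D$ fixes each supercuspidal (with $d(\pi)=0$), fixes each irreducible principal series (with $d(\pi)=1$), interchanges $\St$ with the trivial character $\CC$ (both with $d(\pi)=1$), and sends a character $\chi$ of $\PGL_2(F)$ to its Steinberg twist $\chi\St$ (with $d(\pi)=1$).

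First I would rule out all potential submodules other than supercuspidals and twists of the Steinberg. If $\pi$ is an irreducible principal series, or any character $\chi$ (including the trivial character), then $D(\pi)$ is irreducible and infinite dimensional, so by Corollary \ref{ext-vanish} we have $\Ext^1_{\PGL_2(F)}[\pi_1\otimes\pi_2, D(\pi)]=0$, and Theorem \ref{SS} gives $\Hom_{\PGL_2(F)}[\pi,\pi_1\otimes\pi_2]=0$. For $\pi$ supercuspidal we have $d(\pi)=0$ and $D(\pi)=\pi$, so Theorem \ref{SS} supplies a perfect pairing between $\Hom[\pi,\pi_1\otimes\pi_2]$ and $\Hom[\pi_1\otimes\pi_2,\pi]$, showing that $\pi$ is a submodule iff it is a quotient and establishing case (1).

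The remaining case is $\pi=\St$, after absorbing any twist into $\pi_1$ or $\pi_2$. Since $D(\St)=\CC$ and $d(\St)=1$, I need to compute
\[ \dim\Ext^1_{\PGL_2(F)}[\pi_1\otimes\pi_2,\CC] = \dim\Hom_{\PGL_2(F)}[\pi_1\otimes\pi_2,\CC] - \EP_{\PGL_2(F)}[\pi_1\otimes\pi_2,\CC]. \]
The $\Hom$ term equals $1$ if $\pi_1\cong\pi_2^\vee$ and $0$ otherwise. For the $\EP$ term, additivity applied to the exact sequence $0\to\St\to\Ps(\nu^{1/2},\nu^{-1/2})\to\CC\to 0$ together with Proposition \ref{trilinear} gives $\EP[\pi_1\otimes\pi_2,\Ps(\nu^{1/2},\nu^{-1/2})]=1$ unconditionally, while $\EP[\pi_1\otimes\pi_2,\St]=1$ except in the Jacquet--Langlands exceptional case that $\pi_1,\pi_2$ are both discrete series with $\pi'_1\cong(\pi'_2)^\vee$ (equivalently $\pi_1\cong\pi_2^\vee$), where Corollary \ref{ext-vanish} together with the vanishing of the split-side Hom forces $\EP[\pi_1\otimes\pi_2,\St]=0$.

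The main obstacle is organising the resulting case analysis cleanly. Outside the exceptional case one has $\EP[\pi_1\otimes\pi_2,\CC]=0$, so $\dim\Ext^1=\dim\Hom[\pi_1\otimes\pi_2,\CC]$, which is nonzero precisely when $\pi_1\cong\pi_2^\vee$. Such an isomorphism forces $\pi_1$ and $\pi_2$ to be of the same type, and if both are discrete series we land in the exceptional case, where $\EP[\pi_1\otimes\pi_2,\CC]=1=\dim\Hom$ and hence $\Ext^1=0$. Therefore $\Ext^1_{\PGL_2(F)}[\pi_1\otimes\pi_2,\CC]\ne 0$ precisely when $\pi_1\cong\pi_2^\vee$ and both are irreducible principal series, which via Schneider--Stuhler translates to $\St$ being a subrepresentation of $\pi_1\otimes\pi_2$ exactly in this case, giving case (2).
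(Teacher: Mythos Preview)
Your proof is correct and follows exactly the approach the paper indicates (the paper gives no detailed argument, only the remark that the proposition is ``an application of the calculation on Ext groups which when combined with the duality theorem leads to existence of submodules''). Your use of Theorem \ref{SS} to translate $\Hom[\pi,\pi_1\otimes\pi_2]$ into $\Ext^{d(\pi)}[\pi_1\otimes\pi_2,D(\pi)]$, the elimination of principal series and characters via Corollary \ref{ext-vanish}, and the $\EP$ bookkeeping for the Steinberg case via the exact sequence $0\to\St\to\Ps(\nu^{1/2},\nu^{-1/2})\to\CC\to 0$ and Proposition \ref{trilinear} are precisely what the paper has in mind.
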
 

\begin{remark}
  Unlike the case of triple products above, Chan in \cite{Chan} has proved that in the case of the pair $(\GL_{n+1}(F),\GL_n(F))$,
  if $\Hom_{\GL_n(F)}[\pi_2,\pi_1] \not = 0$, for $\pi_1$ an irreducible representation of $\GL_{n+1}(F)$ and $\pi_2$ of $\GL_n(F)$, then both $\pi_1,\pi_2$ must be one dimensional. Thus in this case, even supercuspidals of $\GL_n(F)$ do not arise as submodules which is related to the non-compact center of the subgroup $\GL_n(F)$
  (and which is not contained in the center of the ambient $\GL_{n+1}(F)$).  
  
  \end{remark}

\begin{remark}
  By Theorem \ref{CS}, for  $\pi$ a generic representation of
  $\GL_{n+1}(F)$,  if $\pi|_{\GL_n(F)}$
  is  a projective representation of $\GL_n(F)$ in any particular block
  of representations of $\GL_n(F)$, then in that block $\pi|_{\GL_n(F)}$ is independent of the generic representation
  $\pi$ of $\GL_{n+1}(F)$. However, this is not true for the restriction problem studied
  in this section, say from $\PGL_2(F) \times \PGL_2(F)$ to the diagonal $\PGL_2(F)$.
  To see this, let $\chi$ be the unique quadratic unramified character of $F^\times$. Let $\pi_1$ be a cuspidal representation
  of $\PGL_2(F)$ for which $\chi \pi_1 \cong \pi_1$, and let $\pi_2$ be a cuspidal representation
  of $\PGL_2(F)$ for which $\chi \pi_2 \not \cong \pi_2$.
  Then the representations
  $\pi_1 \boxtimes \pi_1$, $\pi_1 \boxtimes \pi_2$,   $ \pi_2 \boxtimes \pi_2$, and  $\chi \pi_2 \boxtimes \pi_2$ of  $\PGL_2(F) \times \PGL_2(F)$,
  have different branching when restricted to the subgroup  $\Delta \PGL_2(F)$ for
  the irreducible representations
$\Delta \PGL_2(F)$
  involving the trivial representation, and the character $\chi$, both
  considered as characters of $\Delta \PGL_2(F)$.
  More precisely, for $\Pi$ one of these 4 representations, $\dim\Hom_{\PGL_2(F)}(\Pi,\CC) \leq 1$, and
  $\dim\Hom_{\PGL_2(F)}(\Pi,\CC_\chi) \leq 1$, and all the 4 options on dimension of the Hom spaces
  hold,  giving 4 distinct modules in the Iwahori block of $\PGL_2(F)$ which are all projective
    modules of $\PGL_2(F)$ by Proposition \ref{gl2}.

    The paper \cite{BS} (Remark 3.2, Proposition 3.4  and Corollary 3.7) has general results involving
  projective modules 
  of classical groups which have multiplicity 1, in particular they prove that there are exactly 4 projective modules in the Iwahori block  of $\PGL_2(F)$ which have multiplicity 1. It is thus an interesting exercise to
  independently prove that when $V_1\otimes V_2$
  is a projective module of $\PGL_2(F)$, as in Proposition \ref{gl2}, then $V_1\otimes V_2$ is one of these 4 representations in the
  Iwahori block, something which this author has been able to do only when one of the representations, say $V_1$ is a principal
  series, in which case to understand $V_1\otimes V_2$ one needs to understand $V_2$ restricted to the Borel subgroup
  of $\PGL_2(F)$, which is
  part of the Kirillov theory. Thus we have a complete understanding of the structure of $V_1\otimes V_2$ where $V_1,V_2$ are
  irreducible admissible representations of $\PGL_2(F)$ not only through their irreducible quotients
  which was done in \cite{Pr1}.      \end{remark}

\begin{remark}
  It would be nice to understand projective modules $\Pi$
  of reductive $p$-adic groups $G$ which have multiplicity $\leq 1$ (i.e., $\dim \Hom_G(\Pi,\pi) \leq 1$ for all irreducible representations $\pi$ of $G$) as being the closest
  analogue of rank 1 projective modules of commutative rings. Are there finitely many in any given block? Is there at least one in any block?
  Can one classify them in some ways? For the case of  $\GL_n(F)$, by Theorem \ref{CS}(3) due Chan and Savin,
  there are exactly two  such projective modules up to isomorphism
  in the Iwahori block of $\GL_n(F)$:
  the induced representations from the trivial and the Steinberg representation of $\GL_n(\O_F)$.
  This and the $\PGL_2(F)$ example of the previous remark suggests that, up to a character twist
  and automorphisms of $G(F)$,
  these are the only two projective modules in the Iwahori block of a split reductive group with
  multiplicity $\leq 1$. The situation for general reductive groups is not clear. At least there is the
  well-known generality for simply connected $G$ that there is a compact open subgroup whose trivial representation
  appears with multiplicity at most 1 in any irreducible smooth representation of $G$, thus
  the corresponding induced representation will give a projective module of $G$ with multiplicity $\leq 1$.

  There is also the question if understanding multiplicities
  of (all) irreducible quotients (if all of them are finite) determines the module up to isomorphism, let us
  say in the GGP context, where the question seems to have an affirmative answer. (For example, Lemma \ref{commutative} determines modules over a commutative
  ring in terms of its irreducible quotients.) 
 \end{remark}
\section{Template from algebraic geometry}
        
            We enumerate some of the basic theorems in algebraic geometry which seem to have
    closely related analogues in our context.   For the analogy, we consider
    $H^0(X,{\mathfrak F})$,
    for $X$ a
    smooth projective varieties (or sometimes more general varieties) equipped
    with a coherent sheaf ${\mathfrak F}$ versus $\Hom[\pi_1,\pi_2]$, and corresponding $H^i$
    and $\Ext^i$.  
    \begin{enumerate}
    \item Finite dimensionality of $H^i(X,{\mathfrak F})$
      and  vanishing for $i> \dim X$. 
    \item Semi-continuity theorems available both in algebraic geometry for    
      $H^i(X,{\mathfrak F}_\lambda)$, and $\Ext^i[\pi_{1, \lambda}, \pi_{2,\mu}]$ for families of sheaves or
      of representations.
      
    \item Riemann-Roch theorem  expressing $\EP(X,{\mathfrak F})$ in terms of simple
      invariants associated to $X$ and the sheaf ${\mathfrak F}$. In our case, these are the integral formulae
      which go into the Kazhdan conjecture and in the work of  Waldspurger, involving invariants of the
      space $X$, certain elliptic tori, and invariants associated to sheaves= representations through character
      theory.  

    \item Kodaira vanishing for $H^i(X,{\mathfrak F})$, $i> 0$ for an ample sheaf ${\mathfrak F}$. 

    \item Serre duality 
    \[\Ext^i({\mathcal O}_X,{\mathfrak F}) \times \Ext^{d-i}({\mathfrak F}, \omega_X)
\rightarrow \Ext^d({\mathcal O}_X, \omega_x) = F.\] 

\item Special role played by $X = {\mathbb P}^d(F)$ in Algebraic geometry, and here,
  we have our own, {\it
      her all-embracing majesty}, $\GL_n(F)$. 

    \end{enumerate}

    \vspace{1cm}

 {\bf Acknowledgement:}
    This paper is an expanded and written version of my lecture
    in the IHES Summer School on the Langlands Program in July 2022. The author would like to thank the organizers for putting together a wonderful program. The author especially thanks R. Beuzart-Plessis, Kei Yuen Chan and Gordan Savin
    for all their
    helpful remarks. Although Dragos Fratila was not directly involved with this project, I have benefited greatly by discussions with him on our joint work \cite{FP}, and some follow up work, which is also on homological aspects of $p$-adic groups
    (though not about branching laws or the GGP),
    and it is possible that some of our discussions have found a place here; I thank him warmly! The paper has benefited by the comments of the referee whom the author thanks profusely.

\bibliographystyle{amsalpha}

\end{document}